\theoremstyle{break}
\newtheorem{thm}{Theorem}[section]
\newtheorem{prop}[thm]{Proposition}
\newtheorem{cor}[thm]{Corollary}
\newtheorem{lem}[thm]{Lemma}
\newtheorem{defn}[thm]{Definition}
\newtheorem{rk}[thm]{Remark}
\theoremstyle{remark}
\newtheorem{ex}[thm]{\bf Example}
\newcommand{\Max}{\mbox{{\rm\texttt{Max}}}}
\newcommand{\Min}{\mbox{{\rm\texttt{Min}}}}
\newcommand{\zar}{\mbox{{\rm\texttt{Zar}}}}
\newcommand{\msX}{{\mathscr{X}}}
\newcommand{\Cl}{\mbox{\it\texttt{Cl}}}
\newcommand{\ad}{\mbox{\it\texttt{Cl}}}
\newcommand{\Spec}{\mbox{{\rm\texttt{Spec}}}}
\newcommand{\T}{{\rm\texttt{T}}}
\newcommand{\Card}{\mbox{{\rm\texttt{Card}}}}
\newcommand{\sss}{\mbox{{\rm\texttt{Spec}}}}
        \newcommand{\Kr}{\mbox{{\rm\texttt{Kr}}}}%
         \newcommand{\kr}{\mbox{{\rm\texttt{kr}}}}
    \newcommand{\ms}{\mathscr}
    \newcommand{\z}{\ldots}
 \newcommand{\f}{\mathfrak}
\title[Closure operations in Zariski-Riemann spaces]{Some closure operations in Zariski-Riemann spaces of valuation domains: a survey}
\author{Carmelo A. Finocchiaro, \ Marco Fontana, \ and \ K. Alan Loper}
\address{C.F. \& M.F., Dipartimento di Matematica e Fisica, Universit\`a degli Studi
``Roma Tre'', Largo San Leonardo Murialdo, 1,  00146 Rome, Italy.}
\email{carmelo@mat.uniroma3.it }
\email{fontana@mat.uniroma3.it }
\address{K.A.L., Department of Mathematics, Ohio State University, Newark, OH 43055, USA}
\email{lopera@math.ohio-state.edu}
\thanks{\it Acknowledgments. \rm
During the preparation of this paper, the first two authors
were  partially supported by  a research grant PRIN-MiUR. \\
 {\it Keywords}: {valuation domain; semistar operation; $b$-operation; e.a.b. star operation; spectral space; constructible topology; ultrafilter topology; inverse topology; Kronecker function ring.}\\
{\it Mathematics Subject Classification} (MSC2010): { 13A18; 13F05; 13G05.}}
\begin{document}

\maketitle

{\bf Abstract.} {In this survey we present several results concerning various topologies that were introduced  in recent years on spaces of valuation domains.}

\medskip


\section{Spaces of valuation domains}
The motivations for studying from a topological point of view spaces of valuation domains come from various directions and, historically, mainly from Zariski's work on the reduction of singularities of an algebraic surface and a three-dimensional variety and, more generally, for establishing new foundations of algebraic geometry by algebraic means (see \cite{za-39}, \cite{za}, \cite{za-44} and \cite{zasa}).    
Other important applications  with algebro-geometric flavour are due to Nagata \cite{na-62} \cite{na-63}, Temkin \cite{te-1}, Temkin  and Tyomkin \cite{tety}. 
Further motivations come from rigid algebraic geometry started by J. Tate \cite{ta} (see also   the  papers by Fresnel and van der Put \cite{fr-vdP},  Huber and Knebusch \cite{hu-kn},  Fujiwara and Kato \cite{fuka}), and from real algebraic geometry (see for instance Schwartz \cite{sch} and Huber \cite{hu-2}). For a deeper insight on these topics see \cite{hu-kn}. 

\bigskip

 In the following, we want to present some recent results in the literature concerning various topologies on collections of valuation domains.

Let $K$ be a field, $A$ an \emph{arbitrary} subring of $K$ and let ${\rm qf}(A)$ denote the quotient field of $A$. Set
$$
\zar(K|A):=\{V\mid V \mbox{ is a valuation domain and } A\subseteq V\subseteq K={\rm qf}(V)\}\,.
$$
When $A$ is the prime subring of $K$, we will simply denote by $\zar(K)$ the space $\zar(K|A)$. Recall that O. Zariski in \cite{za} introduced a topological structure on the set $Z:=\zar(K|A)$ by taking, as a basis for the open sets, the subsets $B_F:=B_F^Z:=\{V\in Z\mid V\supseteq F\}$, for $F$ varying in the fami\-ly 
of all finite subsets of $K$  (see also \cite[Chapter VI, \S 17, page 110]{zasa}).
When no confusion can arise, we will simply denote by $B_x$ the basic open set $B_{\{x\}}$ of $Z$. This topology is what that is now called \emph{the Zariski topology on $Z$} and the set $Z$, equipped with this topology, denoted also by $Z^{\mbox{\tiny{\texttt{zar}}}}$, is usually called  \emph{the  Zariski-Riemann space of $K$ over $A$} (sometimes called abstract Riemann surface or generalized Riemann manifold). 

 In 1944, Zariski \cite{za} proved the quasi-compactness of $Z^{\mbox{\tiny{\texttt{zar}}}}$ and later it was proven and rediscovered by several authors, with a variety of different techniques, that if $A$ is an integral domain and $K$ is the quotient field of $A$, then $Z^{\mbox{\tiny{\texttt{zar}}}}$ is a spectral space, in the sense of M. Hochster \cite{ho-69}.
More precisely, in 1986-87, Dobbs, Fedder and Fontana in \cite[Theorem 4.1]{dofefo-87} gave a purely topological proof of this fact and
Dobbs and Fontana presented a more complete version of this result in \cite[Theorem
2]{dofo-86} by exhibiting a ring $R$ (namely, the Kronecker function ring of the integral closure of $A$ with respect to
the $b$-operation) such that $Z$ is canonically homeomorphic
to $\sss( R)$ (both endowed with the Zariski topology). Later, using a general construction of the Kronecker function ring developed by F. Halter-Koch \cite{hk-03}, it was proved that the Zariski-Riemann space $Z$ is a still a spectral space when $K$ 
is not necessarily the quotient field of $A$ (see  \cite[Proposition 2.7]{heu-10} or \cite[Corollary 3.6]{fifolo-13b}). In  2004, in the appendix of \cite{ku-04},
Kuhlmann gave a model-theoretic proof of the fact that $Z$ is a spectral space. Note also that a purely topological approach for proving that $Z$ is spectral was presented by Finocchiaro in \cite[Corollary 3.3]{fi-13}.  Very recently, N. Schwartz \cite{sch-13}, using the inverse
spectrum of a lattice ordered abelian group and its structure sheaf  (see also Rump and Yang \cite{ruya-08}) obtained, as an application of his main theorem, (via the Jaffard-Ohm Theorem) a new proof of the fact that $Z$ is spectral.

Since 
$Z$ is a spectral space, $Z$ also possesses the constructible (or patch), the ultrafilter and the inverse topologies  (definitions will be recalled later) and these other topologies turn out to be more useful than the Zariski topology in several contexts as we will see in the present survey paper.

\section{The constructible topology}
Let $A$ be a ring and let $X:=\sss(A)$ denote the collection of all prime ideals of $A$. The set $X$  can be endowed with the \emph{Zariski topology} which has several attractive properties related to the ``geometric aspects" of the set of prime ideals. As  is well known, $X^{\mbox{\tiny{\texttt{zar}}}}$ (i.e., the set $X$ with the Zariski topology), is always quasi-compact, but almost never Hausdorff. More precisely, $X^{\mbox{\tiny{\texttt{zar}}}}$ is Hausdorff if and only if $\dim(A)=0$. Thus, many authors have considered a finer topology on the prime spectrum of a ring, known as \emph{the constructible topology} (see \cite{ch}, \cite{EGA}) or as \emph{the patch topology} \cite{ho-69}. 

In order to introduce this kind of topology in a more general setting, with a simple set-theoretical approach, we need some notation and terminology. Let $\ms X$ be a topological space. Following \cite{sch-tr}, we set
$$
\begin{array}{rl}
 \mathring{\mathcal K}\!(\ms X):=&\hskip -3pt  \{ U \mid U \subseteq \ms X,\, U \mbox{ open and quasi-compact in } \ms X\},\\
\overline{\mathcal K}\!(\ms X):=&\hskip -3pt  \{ \ms X \setminus U \mid U\in \mathring{\mathcal K}\!(\ms X) \},\\
{\mathcal K}\!(\ms X):=&\hskip -2pt  \mbox{the Boolean algebra of the subsets of $\ms X$ generated by $\mathring{\mathcal K}\!(\ms X)$}.
\end{array}
$$  
As in \cite{sch-tr}, we call \emph{the constructible topology on $\ms X$} the topology on $\ms X$ whose basis of open sets is $\mathcal K(\ms X)$. We denote by $\ms X^{\mbox{\tiny{\texttt{cons}}}}$ the set $\ms X$, equipped with the constructible topology. In particular, when $\ms X$ is a spectral space, the closure of a subset $Y$ of $\ms X$ under the constructible topology is given by:
$$
\begin{array}{rl}
\Cl^{\mbox{\tiny{\texttt{cons}}}}(Y) \! =  \!\bigcap \{U \!\cup\! (\ms X\!\setminus\! V) \mid  & \hskip -5pt \mbox{ $U$ and $V$ open and quasi-compact in }\ms X,  \\
 & \hskip -4pt U  \!\cup \! (\ms X \!\setminus \! V) \supseteq Y \}\,.
 \end{array}
$$
Note that,  for Noetherian topological spaces, this definition of constructible topology coincides with the classical one given in \cite{ch}. 

When $X:=\sss(A)$, for some ring $A$, then the set $\mathring{\mathcal K}\!(X^{\mbox{\tiny{\texttt{zar}}}})$ is a basis of open sets for $X^{\mbox{\tiny{\texttt{zar}}}}$, and thus the constructible topology on $X$ is finer than the Zariski topology. Moreover, $X^{\mbox{\tiny{\texttt{cons}}}}$ is a compact Hausdorff space and the constructible topology on $X$ is the coarsest topology for which $\mathring{\mathcal K}\!(X^{\mbox{\tiny{\texttt{zar}}}})$ is a collection of clopen sets (see \cite[{\bf I.}7.2]{EGA}).

\section{The ultrafilter topology}
In 2008,  the authors of \cite{folo}  considered ``another" natural topology on $X:=\sss(A)$, by using the notion of an ultrafilter and the following lemma.  
\begin{lem}{\rm (Cahen-Loper-Tartarone, \cite[Lemma 2.4]{calota})}
Let $Y$ be a subset of $X:=\sss(A)$ and let $\ms U$ be an ultrafilter on $Y$. Then $\f p_{\ms U}:=\{f\in A \mid V(f)\cap Y\in \ms U\}$ is a prime ideal of $A$ called \emph{the ultrafilter limit point of $Y$, with respect to $\ms U$}. 
\end{lem}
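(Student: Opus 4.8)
The plan is to verify directly that $\f p_{\ms U}$ satisfies the ideal axioms and the primeness condition, exploiting the standard compatibility between the Zariski-closed sets $V(\cdot)$ and the three defining properties of an ultrafilter: upward closure, stability under finite intersection, and maximality. First I would record the elementary set-theoretic identities $V(fg)=V(f)\cup V(g)$, $V(f)\cap V(g)\subseteq V(f+g)$, and $V(f)\subseteq V(af)$ for all $a\in A$, together with $V(0)=X$ and $V(1)=\emptyset$. Intersecting each of these with $Y$ turns them into relations among subsets of $Y$ that interact cleanly with $\ms U$.

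For the ideal structure I would argue as follows. Since $V(0)\cap Y=Y\in\ms U$, we have $0\in\f p_{\ms U}$, and since $V(1)\cap Y=\emptyset\notin\ms U$, we have $1\notin\f p_{\ms U}$, so $\f p_{\ms U}$ is proper. If $f,g\in\f p_{\ms U}$, then both $V(f)\cap Y$ and $V(g)\cap Y$ lie in $\ms U$, hence so does their intersection $(V(f)\cap V(g))\cap Y$; upward closure together with $V(f)\cap V(g)\subseteq V(f+g)$ then forces $V(f+g)\cap Y\in\ms U$, that is, $f+g\in\f p_{\ms U}$. Likewise, for $a\in A$ and $f\in\f p_{\ms U}$, the inclusion $V(f)\subseteq V(af)$ and upward closure give $af\in\f p_{\ms U}$. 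Note that only the filter axioms, not maximality, are used at this stage.

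For primeness I would invoke the one genuinely ``ultra'' property: in an ultrafilter, if a union $S\cup T$ belongs to $\ms U$, then at least one of $S,T$ does. This follows from maximality, since if both complements $Y\setminus S$ and $Y\setminus T$ were in $\ms U$, then their intersection $Y\setminus(S\cup T)$, being disjoint from $S\cup T$, would together with $S\cup T$ force $\emptyset\in\ms U$, a contradiction. Applying this to $V(fg)\cap Y=(V(f)\cap Y)\cup(V(g)\cap Y)$ shows that $fg\in\f p_{\ms U}$ implies $f\in\f p_{\ms U}$ or $g\in\f p_{\ms U}$, which is exactly primeness.

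I do not expect a serious obstacle here: the argument is routine once the dictionary between $V(\cdot)$-operations and the ultrafilter axioms is set up. The only point requiring genuine care is to invoke the correct axiom at each stage, namely finite intersection and upward closure for the ideal properties, maximality (in the equivalent ``prime filter'' form) for primeness, and $\emptyset\notin\ms U$ for properness, so as neither to assume more than a filter where a filter suffices, nor less than an ultrafilter where maximality is essential.
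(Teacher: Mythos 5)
Your proof is correct: the dictionary $V(fg)=V(f)\cup V(g)$, $V(f)\cap V(g)\subseteq V(f+g)$, $V(f)\subseteq V(af)$, $V(0)=X$, $V(1)=\emptyset$, combined with the filter axioms for the ideal properties and the prime-filter (maximality) property for primeness, is exactly the standard verification, and each invocation of an ultrafilter axiom is used correctly. The survey itself states this lemma without proof, citing Cahen--Loper--Tartarone, and your argument is essentially the same direct verification given in that source, so there is nothing to add beyond noting that $Y$ must be nonempty for an ultrafilter on it to exist.
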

The notion of ultrafilter limit points of sets of prime ideals has been used to great effect in  several recent papers \cite{calota}, \cite{Loper1}, \cite{Loper2}.
If $\ms U$ is a trivial (or, principal) ultrafilter on the subset $Y$ of $X$, i.e., $\ms U=\{S\subseteq Y\mid \f p\in S\}$, for some $\f p\in Y$, then $\f p_{\ms U}=\f p$. On the other hand, when $\ms U$ is a nontrivial ultrafilter on $Y$, then it may happen that $\f p_{\ms U}$ { does not belong to} $Y$. This fact motivates the following definition.

\begin{defn} \rm
Let $A$ be a ring and $Y$ be a subset of $X:=\sss(A)$. We say that $Y$ is \emph{ultrafilter closed} if  $\f p_{\ms U}\in Y$, for each ultrafilter $\ms U$ on $Y$.

\end{defn}

It is not hard to see that, for each $Y \subseteq X$,
$$
\Cl^{\mbox{\tiny{\texttt{ultra}}}}(Y) := \{ \f p_{\ms U} \mid \ms U \mbox{ ultrafilter on } Y\}$$ 
satisfies the Kuratowski closure axioms and the set of all ultrafilter closed sets of $X$ is the family of  closed sets for a topology on $X$, called \emph{the ultrafilter topology on $X$}. We denote the set $X$ endowed with the ultrafilter topology by $X^{\mbox{\tiny{\texttt{ultra}}}}$. The main result of \cite{folo} is the following.

\begin{thm}\label{spec-ultra-cons}
\emph{(Fontana-Loper \cite[Theorem 8]{folo})} Let $A$ be a ring.  The ultrafilter topology  coincides with the constructible topology on the prime spectrum $\sss(A)$.  
\end{thm}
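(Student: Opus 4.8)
The plan is to show that the two topologies have the same closure operator, i.e. that $\Cl^{\mbox{\tiny{\texttt{cons}}}}(Y) = \Cl^{\mbox{\tiny{\texttt{ultra}}}}(Y)$ for every $Y \subseteq X := \sss(A)$; since a topology on a set is determined by its associated closure operator, this yields the equality of the two topologies. I would exploit the facts recalled above: that $X^{\mbox{\tiny{\texttt{cons}}}}$ is a \emph{compact Hausdorff} space, and that its basis of open sets is the Boolean algebra $\mathcal K(X)$ generated by the clopen sets $V(f)$ and $D(f):=X\setminus V(f)$, for $f\in A$. In a compact Hausdorff space closure is detected by ultrafilter limits: $x \in \overline Y$ if and only if some ultrafilter on $Y$ converges to $x$, and such a limit is unique. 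The whole argument then hinges on a single identification: \emph{in $X^{\mbox{\tiny{\texttt{cons}}}}$, every ultrafilter $\mathcal U$ on $Y$ converges to $\f p_{\mathcal U}$.}

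To prove this convergence claim, I would first reduce the verification to the subbasic clopen sets. By the Cahen--Loper--Tartarone lemma, $f \in \f p_{\mathcal U}$ means precisely that $V(f) \cap Y \in \mathcal U$; equivalently, since $\mathcal U$ is an ultrafilter and $D(f) \cap Y$ is the complement of $V(f) \cap Y$ in $Y$, one has $\f p_{\mathcal U} \in D(f)$ if and only if $D(f) \cap Y \in \mathcal U$. Thus for each subbasic clopen set $S \in \{V(f), D(f)\}$ the equivalence $\f p_{\mathcal U} \in S \iff S \cap Y \in \mathcal U$ holds. A general basic constructible neighborhood $W$ of $\f p_{\mathcal U}$ contains a finite intersection $S_1 \cap \dots \cap S_m$ of such clopen sets to which $\f p_{\mathcal U}$ belongs; then each $S_j \cap Y \in \mathcal U$, so their finite intersection lies in $\mathcal U$ by the filter property, whence $W \cap Y \in \mathcal U$. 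This is exactly the assertion that $\mathcal U \to \f p_{\mathcal U}$.

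With the claim in hand, both inclusions follow quickly. For $\Cl^{\mbox{\tiny{\texttt{ultra}}}}(Y) \subseteq \Cl^{\mbox{\tiny{\texttt{cons}}}}(Y)$, convergence of $\mathcal U$ to $\f p_{\mathcal U}$ shows that every constructible neighborhood of $\f p_{\mathcal U}$ meets $Y$, so $\f p_{\mathcal U} \in \Cl^{\mbox{\tiny{\texttt{cons}}}}(Y)$. For the reverse inclusion, given $x \in \Cl^{\mbox{\tiny{\texttt{cons}}}}(Y)$ I would trace the neighborhood filter of $x$ on $Y$ (each set $W \cap Y$, with $W$ a constructible neighborhood of $x$, is nonempty) and extend this filter base to an ultrafilter $\mathcal U$ on $Y$. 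By construction $\mathcal U \to x$, while the claim gives $\mathcal U \to \f p_{\mathcal U}$; uniqueness of limits in the Hausdorff space $X^{\mbox{\tiny{\texttt{cons}}}}$ forces $x = \f p_{\mathcal U} \in \Cl^{\mbox{\tiny{\texttt{ultra}}}}(Y)$.

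I expect the main obstacle to be the convergence claim, and within it the reduction to the subbasic clopen sets: the substance is that the \emph{set-theoretic} recipe defining $\f p_{\mathcal U}$ coincides with the \emph{topological} notion of limit in $X^{\mbox{\tiny{\texttt{cons}}}}$, and it is precisely here that the Cahen--Loper--Tartarone lemma does the essential work. A secondary point requiring care is the reverse inclusion, where one must extend the trace of the neighborhood filter to a genuine ultrafilter on $Y$ and then separate $x$ from $\f p_{\mathcal U}$; if one prefers not to invoke compactness and Hausdorffness as a black box, the separation can be done by hand, noting that if $x \neq \f p_{\mathcal U}$ there is an $f$ with, say, $x \in V(f)$ and $\f p_{\mathcal U} \in D(f)$, which would place the two disjoint traces $V(f) \cap Y$ and $D(f) \cap Y$ simultaneously in $\mathcal U$, a contradiction.
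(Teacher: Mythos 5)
Your proof is correct, but there is nothing in this paper to compare it against: the survey states Theorem \ref{spec-ultra-cons} with only a citation to the original article of Fontana and Loper \cite{folo}, and gives no proof of it. Judged on its own, your argument is complete given the facts the survey recalls (the Cahen--Loper--Tartarone lemma and the description of the constructible topology via the Boolean algebra $\mathcal K(X)$). The one step worth spelling out is why $\{V(f), D(f) \mid f\in A\}$ is a subbasis of $X^{\mbox{\tiny{\texttt{cons}}}}$: the quasi-compact open subsets of $\sss(A)$ are exactly the finite unions of the $D(f)$, so the Boolean algebra they generate is the one generated by the $D(f)$ alone, whose members are finite unions of finite intersections of sets $D(f)$ and $V(g)$; this is precisely what licenses your reduction of the convergence claim to subbasic clopen sets. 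With that in place, both inclusions are sound: the convergence claim correctly identifies $\f p_{\mathcal U}$ as the limit in $X^{\mbox{\tiny{\texttt{cons}}}}$ of (the pushforward of) $\mathcal U$, and your closing separation argument is a nice touch, since it replaces the appeal to compactness and Hausdorffness of $X^{\mbox{\tiny{\texttt{cons}}}}$ by the elementary observation that an ultrafilter on $Y$ cannot contain the two disjoint traces $V(f)\cap Y$ and $D(f)\cap Y$. As far as a comparison with the cited source is possible, your route -- proving the easy inclusion by convergence and the hard one by extending the trace of the neighborhood filter of a point of $\Cl^{\mbox{\tiny{\texttt{cons}}}}(Y)$ to an ultrafilter on $Y$ -- is the same mechanism that drives the Fontana--Loper argument and its Zariski--Riemann analogue in \cite{fifolo-13a}, so you have rediscovered the standard proof rather than found a genuinely different one.
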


Taking as starting point the situation described above for  the prime spectrum of a ring, the next goal is to define an ultrafilter topology on the set $Z:=\zar(K|A)$ (where $K$ is a field and $A$ is a subring of $K$) that is finer than the Zariski topology. We start by recalling the following useful fact.

\begin{lem}{\rm (Cahen-Loper-Tartarone, \cite[Lemma 2.9]{calota})}
Let $K$ be a field and $A$ be a subring of $K$. if $Y$ is a nonempty subset of $Z:=\zar(K|A)$ and $\ms U$ is an ultrafilter on $Y$, then
$$
A_{\ms U,Y}:=A_{\ms U}:=\{x\in K\mid B_x\cap Y\in \ms U\}
$$
is a valuation domain of $K$ containing $A$ as a subring (i.e., $A_{\ms U}\in Z$), called \emph{the ultrafilter limit point of $Y$ in $Z$ , with respect to $\ms U$}. 
\end{lem}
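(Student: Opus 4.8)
The plan is to verify directly that $A_{\ms U}$ satisfies the three defining requirements for membership in $Z$: it is a subring of $K$, it contains $A$, and it is a valuation domain whose quotient field is $K$. Throughout I will use only the basic features of an ultrafilter $\ms U$ on the nonempty set $Y$, namely that it is closed under finite intersections, closed upward (if $S\in\ms U$ and $S\subseteq T\subseteq Y$ then $T\in\ms U$), and \emph{prime} in the sense that whenever a member of $\ms U$ is covered by finitely many subsets of $Y$, at least one of them already lies in $\ms U$.

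First I would observe that $A\subseteq A_{\ms U}$: for $a\in A$ one has $a\in V$ for every $V\in Z$, so $B_a\cap Y=Y\in\ms U$ and hence $a\in A_{\ms U}$; in particular $1\in A_{\ms U}$. Next I would check closure under the ring operations by transporting the corresponding closure properties of the individual rings $V\in Y$ through $\ms U$. Given $x,y\in A_{\ms U}$, both $B_x\cap Y$ and $B_y\cap Y$ lie in $\ms U$, hence so does their intersection; since every $V$ in that intersection contains $x$ and $y$ and is a ring, it contains $x+y$ and $xy$. Thus $(B_x\cap Y)\cap(B_y\cap Y)$ is contained in both $B_{x+y}\cap Y$ and $B_{xy}\cap Y$, and upward closure forces these to belong to $\ms U$, so $x+y,xy\in A_{\ms U}$; the analogous inclusion $B_x\cap Y\subseteq B_{-x}\cap Y$ handles additive inverses. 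This shows $A_{\ms U}$ is a subring of $K$, and being a subring of a field it is automatically an integral domain.

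The heart of the argument is the valuation property. Let $x\in K$, $x\neq 0$. Because each $V\in Y$ is a valuation domain of $K$, we have $x\in V$ or $x^{-1}\in V$ for every such $V$, which means $Y=(B_x\cap Y)\cup(B_{x^{-1}}\cap Y)$. Since $Y\in\ms U$ and $\ms U$ is prime, at least one of these two sets belongs to $\ms U$, i.e.\ $x\in A_{\ms U}$ or $x^{-1}\in A_{\ms U}$. This simultaneously shows that $A_{\ms U}$ satisfies the valuation condition and that every nonzero $x\in K$ lies in the quotient field of $A_{\ms U}$, so that ${\rm qf}(A_{\ms U})=K$. Combining the three steps yields $A_{\ms U}\in\zar(K|A)=Z$.

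I expect the only genuinely delicate point to be this last step, and specifically the appeal to the primeness of $\ms U$: it is precisely here that one sees why an ultrafilter, rather than an arbitrary filter, is needed. The pointwise dichotomy ``$x\in V$ or $x^{-1}\in V$'' holding across all $V\in Y$ does not by itself place either half of the partition in a mere filter; only the maximality of $\ms U$ guarantees that one of $B_x\cap Y$, $B_{x^{-1}}\cap Y$ is selected, and this is exactly what promotes the valuation property from the members of $Y$ to the limit point $A_{\ms U}$.
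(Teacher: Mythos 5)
Your proof is correct. The paper itself states this lemma without proof (it is quoted from Cahen--Loper--Tartarone \cite[Lemma 2.9]{calota}), and your argument is precisely the standard one used there: ring closure is transported through the filter properties of $\ms U$, and the valuation property together with ${\rm qf}(A_{\ms U})=K$ follows from the dichotomy $Y=(B_x\cap Y)\cup(B_{x^{-1}}\cap Y)$ and the primeness of the ultrafilter, which you correctly identify as the one place where maximality of $\ms U$ is indispensable.
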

As before let $Y$ be a nonempty subset of $Z:=\zar(K|A)$, when $V\in Y$ and $\ms U:=\{S\subseteq Y\mid V\in S\}$ is the trivial ultrafilter of $Y$ generated by $V$, then $A_{\ms U}=V$. But, in general, it is possible to construct nontrivial ultrafilters on $Y$ whose ultrafilter limit point are not elements of $Y$. This leads to the following definition. 

\begin{defn} \rm
Let $K$ be a field and $A$ be a subring of $K$. A subset $Y$ of $Z:=\zar(K|A)$ is \emph{ultrafilter closed} if $A_{\ms U}\in Y$, for any ultrafilter $\ms U$ on $Y$.

For every $Y \subseteq Z$, we set
$$
\Cl^{\mbox{\tiny{\texttt{ultra}}}}(Y) := \{A_{\ms U} \mid \ms U \mbox{ ultrafilter on } Y \}\,.
$$
\end{defn}

\begin{thm}\label{zar-ultra}
\emph{(Finocchiaro-Fontana-Loper \cite[Proposition 3.3, Theorems 3.4 and 3.9]{fifolo-13a})}
Let $K$ be a field, $A$ be a subring of $K$, and $Z:=\zar(K|A)$. The following statements hold.

\begin{enumerate}[\rm (1)]
\item $\Cl^{\mbox{\tiny{\texttt{ultra}}}}$ satisfies the Kuratowski closure axioms and so the ultrafilter closed sets of $Z$ are the closed sets for a topology, called \emph{the ultrafilter topology on $Z$}. 
\item Denote by $Z^{\mbox{\tiny{\texttt{ultra}}}}$ the set $Z$ equipped with the ultrafilter topology. Then, $Z^{\mbox{\tiny{\texttt{ultra}}}}$ is a compact Hausdorff topological space. 
\item The ultrafilter topology is the coarsest topology for which the basic open sets $B_F$ of the Zariski topology of $Z$ are clopen. In particular, the ultrafilter topology on $Z$ is finer than the Zariski topology and coincides with
the constructible topology.
\item The surjective map $\gamma: \zar(K|A)^{\mbox{\tiny{\texttt{ultra}}}}\rightarrow \sss(A)^{\mbox{\tiny{\texttt{ultra}}}}$, mapping a valuation domain to its center on $A$, is continuous and closed.
\item If $A$ is a Pr\"ufer domain, the map $\gamma:\zar(K|A)^{\mbox{\tiny{\texttt{ultra}}}}\rightarrow \sss(A)^{\mbox{\tiny{\texttt{ultra}}}}$ is a homeomorphism. 
\end{enumerate}

\end{thm}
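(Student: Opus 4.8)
The plan is to derive all five assertions from a single identity between closure operators. Let $\ms T$ be the coarsest topology on $Z$ for which every $B_F$ is clopen; equivalently, $\ms T$ is the topology having $\{B_x,\, Z\setminus B_x \mid x\in K\}$ as a subbasis of clopen sets, since $B_F=\bigcap_{x\in F}B_x$. The heart of the argument is the claim that
$\Cl^{\mbox{\tiny{\texttt{ultra}}}}(Y)=\Cl^{\ms T}(Y)$ for every $Y\subseteq Z$. Granting this, statement (1) is immediate, because the right-hand side is the closure operator of an honest topology and hence satisfies the Kuratowski axioms; moreover the ultrafilter-closed sets are then exactly the $\ms T$-closed sets, so the ultrafilter topology coincides with $\ms T$, the coarsest topology making the $B_F$ clopen. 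Since $Z^{\mbox{\tiny{\texttt{zar}}}}$ is spectral with the $B_F$ forming a basis of quasi-compact open sets (Section 1), these sets generate the Boolean algebra $\mathcal K(Z^{\mbox{\tiny{\texttt{zar}}}})$, so $\ms T$ is precisely the constructible topology, which finishes (3).

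To prove the identity I would establish two inclusions. For $\Cl^{\mbox{\tiny{\texttt{ultra}}}}(Y)\subseteq\Cl^{\ms T}(Y)$, fix an ultrafilter $\ms U$ on $Y$ and a basic $\ms T$-neighborhood $N=B_F\cap(Z\setminus B_G)$ of $A_{\ms U}$: from $A_{\ms U}\in B_F$ one gets $B_x\cap Y\in\ms U$ for each $x\in F$, and from $A_{\ms U}\notin B_G$ one gets $Y\setminus B_y\in\ms U$ for each $y\in G$, so intersecting these finitely many members of $\ms U$ yields $N\cap Y\in\ms U$, whence $N\cap Y\neq\emptyset$ and $A_{\ms U}\in\Cl^{\ms T}(Y)$. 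For the reverse inclusion, given $V\in\Cl^{\ms T}(Y)$ the traces on $Y$ of the basic $\ms T$-neighborhoods of $V$ are nonempty and closed under finite intersection, so they extend to an ultrafilter $\ms U$ on $Y$; testing this $\ms U$ against $B_x$ and $Z\setminus B_x$ shows $x\in A_{\ms U}\iff x\in V$, so $A_{\ms U}=V$. This production of an ultrafilter out of purely topological data is the step I expect to be the main obstacle.

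For (2), Hausdorffness is clear: if $V\neq W$ there is $x\in K$ lying in exactly one of them, and the clopen set $B_x$ separates them. For quasi-compactness I would check that every ultrafilter $\ms U$ on $Z$ converges to $A_{\ms U}$: if an ultrafilter-closed set $C$ belongs to $\ms U$, then the trace of $\ms U$ on $C$ has the same limit point $A_{\ms U}$, which therefore lies in $C$; passing to complements shows every open neighborhood of $A_{\ms U}$ lies in $\ms U$. (Alternatively, this is the general fact that the constructible topology of a spectral space is compact and Hausdorff.)

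Finally I would treat the center map, writing $\f m_V$ for the maximal ideal of $V$, so $\gamma(V)=\f m_V\cap A$. Surjectivity is the classical existence of a valuation domain of $K$ dominating $A_{\f p}$ for each $\f p\in\sss(A)$, its center on $A$ being $\f p$. For continuity it suffices to show $\gamma(\Cl^{\mbox{\tiny{\texttt{ultra}}}}(Y))\subseteq\Cl^{\mbox{\tiny{\texttt{ultra}}}}(\gamma(Y))$, and the key computation is that for an ultrafilter $\ms U$ on $Y$ and $0\neq f\in A$ one has $f\in\f m_{A_{\ms U}}$ exactly when $1/f\notin A_{\ms U}$, i.e. when $\{V\in Y\mid f\in\f m_V\}\in\ms U$; this says precisely that $\gamma(A_{\ms U})$ equals the ultrafilter limit point $\f p_{\gamma_\ast\ms U}$ of $\gamma(Y)$ with respect to the image ultrafilter $\gamma_\ast\ms U$. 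Thus $\gamma$ carries ultrafilter limit points to ultrafilter limit points and is continuous; being continuous from the compact space $Z^{\mbox{\tiny{\texttt{ultra}}}}$ to the space $\sss(A)^{\mbox{\tiny{\texttt{ultra}}}}$, which equals $\sss(A)^{\mbox{\tiny{\texttt{cons}}}}$ by Theorem~\ref{spec-ultra-cons} and is therefore Hausdorff, it is automatically closed, giving (4). For (5), when $A$ is Pr\"ufer the classical characterization of Pr\"ufer domains makes the center map a bijection between the valuation overrings of $A$ and $\sss(A)$, with inverse $\f p\mapsto A_{\f p}$; hence $\gamma$ is a continuous bijection from a compact space to a Hausdorff space, and thus a homeomorphism.
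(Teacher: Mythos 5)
Your proposal cannot be checked against an in-paper argument, because this survey states Theorem \ref{zar-ultra} with a citation to \cite{fifolo-13a} and gives no proof at all; judged on its own merits, your proof is correct, and its strategy is the one used in that source (and in Fontana--Loper's proof of Theorem \ref{spec-ultra-cons} for prime spectra): identify $\Cl^{\mbox{\tiny{\texttt{ultra}}}}$ with the closure operator of the coarsest topology $\ms T$ making the sets $B_x$ clopen, so that (1) and (3) come for free, prove (2) by ultrafilter convergence, and handle the center map via the computation $\gamma(A_{\ms U})=\f p_{\gamma_*\ms U}$. Three small repairs are worth making. First, a basic open set of $\ms T$ is $B_F\cap\bigcap_{y\in G}(Z\setminus B_y)$, not $B_F\cap(Z\setminus B_G)$; with the correct basic sets your quantifier ``for each $y\in G$'' is legitimate, whereas from the literal hypothesis $A_{\ms U}\notin B_G$ one only gets $Y\setminus B_y\in\ms U$ for \emph{some} $y\in G$ (which, since $Y\setminus B_y\subseteq Z\setminus B_G$, would also suffice) --- as written, the stated inference does not follow from the stated hypothesis. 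Second, in (3) you invoke spectrality of $Z^{\mbox{\tiny{\texttt{zar}}}}$; since in \cite{fifolo-13a} the spectrality of $Z$ (Theorem \ref{zar-spectral}(4) here) is \emph{deduced from} the present theorem, it is cleaner to use only Zariski's quasi-compactness theorem: each $B_F=\zar(K|A[F])$ is itself a Zariski--Riemann space, hence quasi-compact, so the quasi-compact open subsets of $Z^{\mbox{\tiny{\texttt{zar}}}}$ are exactly the finite unions of the $B_F$, and your identification of $\ms T$ with the constructible topology follows with no risk of circularity. Third, in (5) the inverse assignment $\f p\mapsto A_{\f p}$ takes values in $\zar(K|A)$, and $\gamma$ is injective, only when $K$ is the quotient field of $A$; this hypothesis is implicit in the statement of the theorem itself, so this is a gap in the statement rather than in your argument, but it should be made explicit.
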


\begin{rk} \rm (a)  From  Theorem \ref{spec-ultra-cons} and the last statement in point (3) of the previous theorem it is obvious that   points (4) and (5) of Theorem \ref{zar-ultra} hold  when one replaces everywhere ``${\mbox{{\texttt{ultra}}}}$''  with ``${\mbox{{\texttt{cons}}}}$''.

(b) It is well known that  points (4) and (5) of Theorem \ref{zar-ultra} hold  when both spaces are endowed with the Zariski topology (\cite[Theorem 2.5 and 4.1]{dofefo-87} and \cite[Theorem 2 and Remark 3]{dofo-86}).
\end{rk}

We recall now another important notion introduced by Halter--Koch in \cite{hk-03} as a generalization of the classical construction of the Kronecker function ring.

\begin{defn}
\rm
Let $T$ be an indeterminate over the field $K$. A subring $S$ of $K(T)$ is called \emph{a $K$--function ring}  if (a) $T$ and $T^{-1}$ belong to  $S$, and (b) ${f(0)}\in f(T)S$, for each nonzero polynomial $f(T)\in K[T]$. 
\end{defn}

We collect in the following proposition the basic algebraic properties of $K-$function rings \cite[Remarks at page 47 and Theorem (2.2)]{hk-03}.

\begin{prop}\label{basic} 
\emph{(Halter-Koch \cite[Section 2, Remark (1, 2 and 3), Theorem 2.2 and Corollary 2.7]{hk-03})} Let $K$ be a field, $T$ an indeterminate over $K$ and let  $S$ be a subring of $K(T)$. Assume that $S$  is a $K$--function ring.
\begin{enumerate}
\item[\rm (1)] If $S'$ is a subring of $K(T)$ containing $S$, then $S'$ is also a $K$--function ring.
\item[\rm (2)] If $\ms S$ is a nonempty collection of $K$--function rings (in $K(T)$), then $\bigcap \{ \Sigma \mid \Sigma \in\ms S\}$ is a $K$--function ring.
\item[\rm (3)] $S$ is a B\'ezout domain with quotient field $K(T)$.
\item[\rm (4)] If $f:=f_0+ f_1T+\z+f_rT^r\in K[T]$, then ${ (f_0, f_1,\z, f_r)S} =fS$.
\item[\rm (5)] For every valuation domain $V$ of $K$, the trivial extension or Gaussian extension $V(T)$ in 
$K(T)$ (i.e., $V(T) := V[T]_{M[T]}$, where $M$ is the maximal ideal of $V$) 
is a $K$--function ring. 
\end{enumerate}
\end{prop}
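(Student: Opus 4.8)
The plan is to handle the five assertions in order of increasing depth, extracting the two substantive ones, (3) and (4), from a single computation based on axiom (b). Assertions (1) and (2) are immediate from the definition. If $S\subseteq S'\subseteq K(T)$, then $T,T^{-1}\in S\subseteq S'$ and $f(0)\in f(T)S\subseteq f(T)S'$, giving (1). For (2) the key observation is that, since $K(T)$ is a field and $f(T)\neq 0$, the relation $f(0)\in f(T)\Sigma$ says precisely that the \emph{fixed} element $f(0)/f(T)\in K(T)$ lies in $\Sigma$; hence it lies in every member of $\ms S$ and so in $\bigcap\ms S$, which is a subring of $K(T)$ containing $T,T^{-1}$.

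I would next dispose of (5) by a direct valuation-theoretic computation. Writing $f=\sum_{i=0}^r f_iT^i$ with the $f_i\in K$ not all zero, I would pick a coefficient $f_k$ of minimal $V$-value; then $f/f_k\in V[T]$ has its $k$-th coefficient equal to $1$, so $f/f_k\notin M[T]$ and is therefore a unit of $V(T)=V[T]_{M[T]}$. Since $f_0/f_k\in V$ (again by minimality of the value of $f_k$), one gets $f_0/f=(f_0/f_k)\,(f/f_k)^{-1}\in V(T)$, which is exactly axiom (b); axiom (a) is clear because $1/T\in V(T)$, its denominator $T$ having a unit coefficient.

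The heart of the matter is (4), and I expect its nontrivial inclusion to be the main obstacle. One inclusion is free: since $T\in S$, every $T^i\in S$, so $f=\sum f_iT^i\in(f_0,\dots,f_r)S$, whence $fS\subseteq(f_0,\dots,f_r)S$. For the reverse inclusion I must show $f_i/f\in S$ for each $i$, and here I would argue by a telescoping induction driven by axiom (b) together with $T^{-1}\in S$. Axiom (b) applied to $f$ gives $f_0/f\in S$; then $1-f_0/f=(f-f_0)/f=T\,g/f$, where $g:=f_1+f_2T+\dots+f_rT^{r-1}$, so multiplying by $T^{-1}\in S$ yields $g/f\in S$. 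Applying axiom (b) to $g$ gives $f_1/g\in S$, and therefore $f_1/f=(f_1/g)(g/f)\in S$; iterating this procedure down the coefficients (replacing $f$ by $g$, then by the next truncation) produces $f_i/f\in S$ for all $i$, which proves (4).

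Finally I would derive (3) from (4). That $S$ is a domain is automatic, and ${\rm qf}(S)=K(T)$ follows from a degree-one instance of (b): for $a\in K^{\times}$, axiom (b) applied to $T+a$ gives $a/(T+a)\in S$, hence also $T/(T+a)=1-a/(T+a)\in S$, and the quotient of these two elements of ${\rm qf}(S)$ is $a/T$, so $a=T\cdot(a/T)\in{\rm qf}(S)$; together with $T\in S$ this gives $K(T)\subseteq{\rm qf}(S)$. For the B\'ezout property it suffices to show that every two-generated ideal is principal. Clearing a common denominator (a unit of the field ${\rm qf}(S)$) reduces $(s_1,s_2)S$ to an ideal of the form $(f_1,f_2)S$ with $f_1,f_2\in K[T]$; then, choosing $N>\deg f_1$ and setting $F:=f_1+T^{N}f_2$, the nonzero coefficients of $F$ are exactly those of $f_1$ together with those of $f_2$, so applying (4) to $F$, to $f_1$, and to $f_2$ yields $(f_1,f_2)S=FS$, a principal ideal. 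Induction on the number of generators then completes the proof of (3).
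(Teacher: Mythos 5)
Your proposal is correct, but there is no argument in the paper to compare it against: the survey gives no proof of this proposition at all, quoting it directly from Halter-Koch \cite{hk-03} (hence the attribution in the statement). So the relevant comparison is with the original source, and what you have written is essentially a self-contained reconstruction of the standard arguments. Items (1) and (2) are definitional, and your observation that axiom (b) is a membership condition on the \emph{fixed} element $f(0)/f(T)\in K(T)$ is exactly what makes the intersection statement immediate. Your proof of (5) is the usual computation in $V[T]_{M[T]}$ via a coefficient of minimal value. The telescoping induction for (4) --- axiom (b) together with $T^{-1}\in S$ forcing $f_i/f\in S$ for every $i$ --- is sound; for completeness you should dispose of the degenerate cases ($f=0$, for which both sides vanish, and the vanishing of a truncation $g$, which happens exactly when all remaining coefficients are zero), but these are trivial. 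Finally, your derivation of (3) from (4) via the spliced polynomial $F=f_1+T^{N}f_2$ with $N>\deg f_1$ is precisely the classical Kronecker trick (it is also how Halter-Koch argues, and it goes back to the treatment of Kronecker function rings in Gilmer \cite{gi-72}); the degree-one instance of (b) giving $\mathrm{qf}(S)=K(T)$, and the reduction of an arbitrary two-generated ideal to one with polynomial generators by clearing a common denominator, are both fine. In short: no gap --- your write-up supplies proofs that the survey deliberately omits.
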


\bigskip

Given a subring $S$ of $K(T)$, we will denote by $\zar_0(K(T)|S)$ the subset of $\zar(K(T)|S)$ consisting of all the valuation domains of $K(T)$ that are trivial extensions of some valuation domain of $K$. 

The following characterization of $K$--function rings provides a slight gene\-ralization of \cite[Theorem 2.3]{heu-10} and its
proof is similar to that given by O. Kwegna Heubo, which is based on the work by
Halter-Koch  \cite{hk-03}.

\begin{prop}\label{zar0}
\emph{(Finocchiaro-Fontana-Loper \cite[Propositions 3.2 and 3.3]{fifolo-13a})} Let $K$ be a field, $T$  an indeterminate over $K$ and $S$  a subring of $K(T)$. Then, the following conditions are equivalent. 
\begin{enumerate}[\rm (i)]
\item $S$ is a $K-$function ring. 
\item $S$ is integrally closed and $\zar(K(T)|S)=\zar_0(K(T)|S)$. 
\item $S$ is the intersection of a nonempty subcollection of $\zar_0(K(T))$. 
\end{enumerate}
\end{prop}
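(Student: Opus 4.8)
The plan is to prove the cyclic chain of implications (i) $\Rightarrow$ (ii) $\Rightarrow$ (iii) $\Rightarrow$ (i), relying on the structural facts collected in Proposition \ref{basic} together with the classical representation of an integrally closed domain as an intersection of its valuation overrings. The easiest link to dispose of first is (iii) $\Rightarrow$ (i): if $S=\bigcap\{V_\lambda(T)\mid \lambda\in\Lambda\}$ for a nonempty family of valuation domains $V_\lambda$ of $K$, then each $V_\lambda(T)$ is a $K$--function ring by Proposition \ref{basic}(5), and a nonempty intersection of $K$--function rings is again a $K$--function ring by Proposition \ref{basic}(2); hence $S$ is a $K$--function ring.

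For (i) $\Rightarrow$ (ii) I would first invoke Proposition \ref{basic}(3): a $K$--function ring $S$ is a B\'ezout domain with quotient field $K(T)$, so $S$ is integrally closed and $\zar(K(T)|S)$ is exactly the set of its valuation overrings. Since $\zar_0(K(T)|S)\subseteq \zar(K(T)|S)$ is trivial, the content of (ii) is the reverse inclusion: every $W\in\zar(K(T)|S)$ is a trivial extension. Fix such a $W$, let $w$ be an associated valuation, and set $V:=W\cap K$, a valuation domain of $K$ with valuation $v=w|_K$. Given $f=f_0+f_1T+\cdots+f_rT^r\in K[T]$, Proposition \ref{basic}(4) gives the ideal identity $(f_0,\ldots,f_r)S=fS$, and extending it along $S\subseteq W$ yields $(f_0,\ldots,f_r)W=fW$. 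As $W$ is a valuation domain, the left-hand ideal is generated by a coefficient of least value, so $w(f)=\min_i w(f_i)=\min_i v(f_i)$. This is precisely the Gaussian valuation attached to $v$; since $w$ and the Gaussian extension of $v$ agree on $K[T]$ and therefore on $K(T)$, we conclude $W=V(T)$.

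This identification of the valuation overrings of $S$ as trivial extensions is the step I expect to be the main obstacle, and it is exactly where the defining feature of a $K$--function ring, routed through the content formula of Proposition \ref{basic}(4), does the decisive work: without it there is no reason for a valuation of $K(T)$ dominating $S$ to be Gaussian rather than, say, ramified along $T$.

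Finally, for (ii) $\Rightarrow$ (iii) I would use that an integrally closed $S$ is the intersection of its valuation overrings in $K(T)$, that is $S=\bigcap\{W\mid W\in\zar(K(T)|S)\}$ (the family being nonempty as soon as $S\neq K(T)$, the degenerate case being set aside). The hypothesis $\zar(K(T)|S)=\zar_0(K(T)|S)$ then rewrites this as $S=\bigcap\{V(T)\mid V(T)\in\zar_0(K(T)|S)\}$, which displays $S$ as the intersection of a nonempty subcollection of $\zar_0(K(T))$, namely (iii). The one point to watch here is the bookkeeping of quotient fields: the valuation domains entering the intersection must have quotient field $K(T)$ in order to lie in $\zar(K(T)|S)$, and this is guaranteed by reading ``integrally closed'' as integral closedness in $K(T)$, equivalently by the representation $S=\bigcap\zar(K(T)|S)$ itself.
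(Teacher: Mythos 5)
Your proof is correct and follows essentially the route the paper indicates: the survey itself defers the proof to \cite[Propositions 3.2 and 3.3]{fifolo-13a} (noting it is based on Heubo-Kwegna and Halter-Koch), and your argument---Proposition \ref{basic}(2),(5) for (iii)$\Rightarrow$(i), the B\'ezout property and content formula \ref{basic}(3),(4) to force every valuation overring to be Gaussian for (i)$\Rightarrow$(ii), and Krull's representation of the integral closure as an intersection of valuation rings for (ii)$\Rightarrow$(iii)---is exactly that Halter-Koch-based argument. Your parenthetical reading of ``integrally closed'' in (ii) as integrally closed \emph{in} $K(T)$ is indeed the intended and necessary interpretation (with the weaker reading, rings such as a Kronecker function ring in $K(T^2)$ viewed inside $K(T)$ would satisfy (ii) but not (i)), so flagging and resolving that point was the right call.
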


\smallskip

We give next one of the main results in \cite{fifolo-13a} which, for the case of the Zariski to\-po\-logy, was already proved   in \cite[Corollary 2.2, Proposition 2.7 and Corollary 2.9]{heu-10}. More precisely,

\begin{thm} \label{zar-spectral}
\emph{(Finocchiaro-Fontana-Loper \cite[Corollary 3.6, Proposition 3.9, Corollary 3.11]{fifolo-13a})} 
Let $K$ be a field and $T$  an indeterminate over $K$. The following statements hold.
\begin{enumerate}[\rm(1)]
\item The natural map $\varphi:\zar(K(T))\rightarrow \zar(K)$, $W\mapsto W\cap K$, is continuous and closed  with respect to both the Zariski topology and the ultrafilter topology (on both spaces).  
\item If $S\subseteq K(T)$ is a $K$--function ring, then the restriction of $\varphi$ to the subspace $\zar(K(T)|S)$ of $\zar(K(T))$ is a topological embedding, with respect to   both  the Zariski topology and the ultrafilter topology. 
\item Let $A$ be \emph{any} subring of $K$, and let $${\rm Kr}(K|A):=\bigcap \{V(T)\mid V\in \zar(K|A)\}.$$ Then ${\rm Kr}(K|A)$ is a $K$--function ring. Moreover, the restriction of the map $\varphi$ to $\zar(K(T)|{\rm Kr}(K|A))$ establishes a homeomorphism of \  $\zar(K(T)|{\rm Kr}(K|A))$ with $\zar(K|A)$, with respect   to both the   Zariski topology and the  ultrafilter topology.
\item Let $A$ be a subring of $K$, $S_A:={\rm Kr}(K|A)$, and let $\gamma:\zar(K(T)|S_A)\rightarrow \sss(S_A)$ be the map sending a valuation overring of $S_A$ into its center on $S_A$. Then $\gamma$ establishes a homeomorphism, with respect   to both the Zariski topology and  the ultrafilter topology; thus, the map $$\sigma:= \gamma\circ \varphi^{-1}:\zar(K|A)\rightarrow \zar(K(T)|S_A)\rightarrow \sss(S_A)
$$
is also a homeomorphism. In other words, $\zar(K|A)$ is a spectral space when endowed with either the Zariski topology or the ultrafilter to\-po\-logy.
\end{enumerate}
\end{thm}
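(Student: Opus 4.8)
The plan is to prove the four assertions in order, reducing everything to one algebraic computation on Gaussian extensions together with soft facts about compact Hausdorff and spectral spaces. For continuity I would start from the single identity
$$
\varphi^{-1}\!\left(B^{\zar(K)}_{x}\right)=\{W\in\zar(K(T))\mid x\in W\cap K\}=B^{\zar(K(T))}_{x}\qquad(x\in K),
$$
which immediately gives Zariski-continuity since these sets form a basis. For the ultrafilter topology I would show that $\varphi$ commutes with ultrafilter limit points: if $\ms U$ is an ultrafilter on $Y\subseteq\zar(K(T))$ and $\varphi_*(\ms U)$ denotes the image ultrafilter on $\varphi(Y)$ (so $S\in\varphi_*(\ms U)\iff\varphi^{-1}(S)\cap Y\in\ms U$), then the same identity yields $B^{\zar(K)}_x\cap\varphi(Y)\in\varphi_*(\ms U)\iff B^{\zar(K(T))}_x\cap Y\in\ms U$ for all $x\in K$, whence $\varphi(A_{\ms U})=A_{\ms U}\cap K=A_{\varphi_*(\ms U)}$. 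This makes preimages of ultrafilter-closed sets ultrafilter-closed. Ultrafilter-closedness of $\varphi$ is then free, because $\zar(K(T))^{\mbox{\tiny{\texttt{ultra}}}}$ is compact and $\zar(K)^{\mbox{\tiny{\texttt{ultra}}}}$ Hausdorff by Theorem \ref{zar-ultra}(2). For Zariski-closedness I would argue that the image of a Zariski- (hence constructible-) closed set $C$ is constructible-closed by the previous step, and is also stable under specialization: given $W\in C$ and a valuation subring $V'\subseteq W\cap K$ of $K$, one lifts $V'$ to a valuation subring $W'\subseteq W$ of $K(T)$ with $W'\cap K=V'$, so that $W'\in\overline{\{W\}}\subseteq C$ and $V'=\varphi(W')\in\varphi(C)$; a constructible-closed, specialization-stable subset of a spectral space is Zariski-closed.

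\textbf{The embedding (part (2)).} The decisive structural input is Proposition \ref{zar0}: when $S$ is a $K$-function ring, $\zar(K(T)|S)=\zar_0(K(T)|S)$, so each $W$ there is a Gaussian extension $W=V(T)$ with $V:=W\cap K$, and $\varphi$ restricts to a bijection onto its image with inverse $V\mapsto V(T)$. Continuity comes from part (1), so the crux is openness onto the image, and this is the step I expect to be the main obstacle. It rests on the content (Gauss) description of membership in $V(T)$: writing $v$ for a valuation of $V$ and $f=\sum_i f_iT^i$, $g=\sum_j g_jT^j\in K[T]$, one has $f/g\in V(T)\iff\min_i v(f_i)\ge\min_j v(g_j)$, and rewriting the comparison of the two minima gives
$$
\{V\mid f/g\in V(T)\}=\bigcap_{i}\bigcup_{g_j\neq0}B^{\zar(K)}_{f_i/g_j}\,.
$$
Hence the $\varphi$-image of the trace on $\zar_0(K(T)|S)$ of a basic open $B^{\zar(K(T))}_{f/g}$ is a finite Boolean combination of basic Zariski-open sets of $\zar(K)$, which proves $\varphi$ open onto its image. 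For the ultrafilter topology the embedding is then automatic: $\zar(K(T)|S)=\bigcap_{s\in S}B^{\zar(K(T))}_{s}$ is ultrafilter-closed, hence compact, so the continuous injection into the Hausdorff space $\zar(K)^{\mbox{\tiny{\texttt{ultra}}}}$ is a closed embedding.

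\textbf{The Kronecker function ring (part (3)).} First, $\Kr(K|A)$ is an intersection of the $K$-function rings $V(T)$ (Proposition \ref{basic}(5)), hence itself a $K$-function ring by Proposition \ref{basic}(2), so part (2) applies with $S=\Kr(K|A)$ and it remains only to identify the image. Since $a\in V$ for every $a\in A$ and every $V\in\zar(K|A)$, one has $A\subseteq\Kr(K|A)$; therefore a Gaussian extension $V(T)\supseteq\Kr(K|A)$ satisfies $V=V(T)\cap K\supseteq A$, i.e.\ $V\in\zar(K|A)$, while conversely $V\in\zar(K|A)$ forces $\Kr(K|A)\subseteq V(T)$ by the very definition of the intersection. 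Thus the image is exactly $\zar(K|A)$ and $\varphi$ is a homeomorphism onto it for both topologies.

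\textbf{Spectrality (part (4)).} By Proposition \ref{basic}(3) the ring $S_A=\Kr(K|A)$ is a B\'ezout, hence Pr\"ufer, domain with quotient field $K(T)$. For a Pr\"ufer domain the center map $\gamma\colon\zar(K(T)|S_A)\to\sss(S_A)$ is a homeomorphism for the Zariski topology, since every valuation overring is a localization $(S_A)_{\f p}$, and for the ultrafilter topology by Theorem \ref{zar-ultra}(5) applied to $S_A$. Composing with the inverse of the homeomorphism from part (3) shows that $\sigma=\gamma\circ\varphi^{-1}\colon\zar(K|A)\to\sss(S_A)$ is a homeomorphism for both the Zariski and the ultrafilter topology. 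Since $\sss(S_A)$ is a spectral space, so is $\zar(K|A)$; and its constructible topology agrees with its ultrafilter topology through $\sigma$, by Theorems \ref{spec-ultra-cons} and \ref{zar-ultra}(3).
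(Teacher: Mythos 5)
You should first be aware that the survey itself contains no proof of Theorem \ref{zar-spectral}: it is quoted verbatim from \cite{fifolo-13a}, so the only possible comparison is with the strategy of the cited works. Your architecture is exactly that strategy: reduce everything to Gaussian extensions via Proposition \ref{zar0}, identify $\zar(K(T)|{\rm Kr}(K|A))$ with $\zar(K|A)$ through $\varphi$, and transport spectrality from $\Spec({\rm Kr}(K|A))$ using the B\'ezout (hence Pr\"ufer) property of the Kronecker function ring together with Theorem \ref{zar-ultra}(5) (the Zariski half of the Pr\"ufer fact being the classical statement recalled in the remark following Theorem \ref{zar-ultra}). Parts (2), (3) and (4) are correct and essentially complete: the Gauss-valuation computation $f/g\in V(T)\iff\min_i v(f_i)\ge\min_j v(g_j)$, with the resulting identity $\{V\mid f/g\in V(T)\}=\bigcap_i\bigcup_{g_j\neq 0}B_{f_i/g_j}$, is the right engine for openness onto the image, and the compact-to-Hausdorff argument disposes of all the ultrafilter statements.

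Two points in part (1) do need repair. First, your Zariski-closedness argument hinges on the sentence ``one lifts $V'$ to a valuation subring $W'\subseteq W$ of $K(T)$ with $W'\cap K=V'$.'' This is precisely the going-down property of $\varphi$, and it is not free: writing $V:=W\cap K$, one must pass to the residue field $k_W$ of $W$, note that $\f m_V\subseteq V'$ so that $V'/\f m_V$ is a valuation ring of the subfield $k_V\subseteq k_W$, extend it to a valuation ring of $k_W$ by Chevalley's extension theorem, and pull back along $W\to k_W$ to obtain the composite valuation ring $W'$. As written, the single nontrivial valuation-theoretic input of part (1) is asserted rather than proved. Second, your closing step --- ``a constructible-closed, specialization-stable subset of a spectral space is Zariski-closed,'' i.e.\ Lemma \ref{special} --- is applied to $\zar(K)$, whose spectrality is exactly statement (4) of the theorem (for $A$ the prime subring); invoking it inside the proof of (1) makes the argument look circular. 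It is not actually circular, because your proofs of (2)--(4) use only the continuity half of (1); but you must make this explicit and reorder the deduction: prove continuity and the ultrafilter assertions of (1), then (2), (3), (4), and only then return to the Zariski-closedness of $\varphi$ (or else quote the spectrality of $\zar(K)$ from the earlier literature recalled in Section 1).
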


Note that  statement (4) of the previous theorem extends \cite[Theorem
2]{dofo-86} to the general case where $A$ is an arbitrary subring of the field $K$.

\smallskip

\section{The inverse topology}

Let $\ms X$ be  any topological space. Then, it is well known that the topology induces a natural {\sl preorder} on $\ms X$ by setting
$$
x\leq y \ :\Leftrightarrow \  y\in \Cl(\{x\}).
$$
Therefore:
$$
x^\uparrow :=\{ y \in \ms X \mid  x \leq y \} =  \Cl(\{x\});
$$
in particular, if $F$ is a closed subspace of ${\mathscr{X}}$ and $x \in F$, then $x^\uparrow \subseteq F$.\\
The set $x^\uparrow$ is called the set of \emph{specializations of} $x$ in ${\mathscr{X}}$; on the other hand, the set 
$$
x^\downarrow :=\{ y \in \ms X \mid  y \leq x \} 
$$
is called the set of \emph{generizations of} $x$.  Since the closed subspaces are closed under specializations, it follows easily that   if $U$ is an open subspace of ${\mathscr{X}}$ and $x \in U$, then $x^\downarrow \subseteq U$.

For a subset $Y$ of $\ms X$ we denote by $Y^\uparrow$ (respectively, $Y^\downarrow$) the set of all specializations (respectively, generizations) of elements in $Y$.

 If $\ms X$ is a \T$_0$-space, then the preorder is a partial order on $\ms X$ and, for $x, y \in\ms X$,  $x^\uparrow= 
y^\uparrow$ if and only if $x =y$.

\medskip

Given a preordered set $(X, \leq)$, we say that {\it a topology $\ms T$ on $X$} is {\it compa\-ti\-ble with the order $\leq$} \ if, for each pair of elements $x$ and $y$ in $X$, $y \in \Cl^{\ms T}(x)$ implies that $x \leq y$. Obviously, in general, several different topologies on $X$ may be compatible with the given order on $X$.

The following properties are easy consequences of the definitions (see, for instance, \cite[Lemma 2.1, Proposition 2.3(b)]{dofopa-80}).

\begin{lem} \label{L-R}  Let $(X, \leq)$ be a preordered set and let $Y \subseteq X$.
\begin{enumerate}[\rm (1)] 
\item $\Cl^L(Y) := Y^\uparrow$ (respectively, $\Cl^R(Y) := Y^\downarrow$) satisfies the  Kuratowski closure axioms and so it defines a topological structure on $
X$, called the \emph{L(eft)-topology} (respectively, the \emph{R(ight)-topology}) on $X$.
\item The L-topology (respectively, R-topology) on $X$ is the finest topology on $X$ compatible with the given order (respectively, with the opposite order of the given order) on $X$.
\item A subset $U$ of $X$ is open in the L-topology (respectively, R-topology) if and only if $U = U^\downarrow \ (= \Cl^R(U), \mbox{i.e., it is closed in the R-topology})$ (respectively, $U = U^\uparrow \ (= \Cl^L(U), \mbox{i.e., it is closed in the L-topology})$.
\item Let $U \subseteq X$ be a  nonempy open subspace of $X$ endowed with the L-topology (respectively, R-topology). Then $U$ is quasi-compact if and only if there exist $x_1, x_2, \dots x_n$ in $U$, with $n \geq 1$, such that $U = x_1^\downarrow \cup x_2^\downarrow \cup \dots  \cup x_n^\downarrow$ (respectively, $U = x_1^\uparrow \cup x_2^\uparrow \cup \dots  \cup x_n^\uparrow$ ).

\end{enumerate}
\end{lem}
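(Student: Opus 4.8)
The plan is to handle the L-topology throughout and obtain every R-topology assertion for free by replacing $\leq$ with its opposite order, under which $Y^\downarrow$ plays the role previously played by $Y^\uparrow$. For part (1) I would verify the four Kuratowski axioms for $\Cl^L(Y) := Y^\uparrow$ directly: the empty set has no specializations, so $\Cl^L(\emptyset)=\emptyset$; reflexivity of $\leq$ gives $Y\subseteq Y^\uparrow$ (extensivity); the identity $(Y_1\cup Y_2)^\uparrow = Y_1^\uparrow \cup Y_2^\uparrow$ is immediate since $x^\uparrow$ is defined pointwise; and idempotence $(Y^\uparrow)^\uparrow = Y^\uparrow$ is exactly the place where transitivity of $\leq$ is used. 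This shows the $\Cl^L$-closed sets are precisely the up-sets. Part (3) then follows at once: $U$ is L-open iff $X\setminus U$ is an up-set, and by complementation this is equivalent to $U$ being a down-set, i.e. $U = U^\downarrow = \Cl^R(U)$, so the L-open sets are exactly the R-closed sets.

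For part (2) I would first record that $\Cl^L(\{x\}) = x^\uparrow$, so that $y\in\Cl^L(\{x\})$ holds if and only if $x\leq y$; in particular the L-topology is compatible with $\leq$. The substantive content is the maximality (finest) claim, and this is the step that needs the most care, since everything hinges on reading off the right direction of the compatibility condition. The argument I would give is as follows: let $\ms T$ be any topology compatible with $\leq$ and let $F$ be a $\ms T$-closed set; for $x\in F$ one has $\Cl^{\ms T}(\{x\})\subseteq F$, and compatibility forces every $y$ with $x\leq y$ to lie in $\Cl^{\ms T}(\{x\})$, hence in $F$. Thus $F$ is an up-set, i.e. $F$ is L-closed. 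So every $\ms T$-closed set is L-closed, which says precisely that the L-topology is finer than $\ms T$; together with the compatibility of L itself, this identifies L as the finest topology compatible with $\leq$.

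The remaining work is part (4), and the key structural observation that makes it routine is that the L-topology is an Alexandrov topology: arbitrary intersections of down-sets are down-sets, so for each $x$ the set $x^\downarrow$ is the smallest L-open set containing $x$, and $\{x^\downarrow \mid x\in X\}$ is a basis with $U=\bigcup_{x\in U} x^\downarrow$ for every open $U$. Given this, both implications are short. If $U=x_1^\downarrow\cup\dots\cup x_n^\downarrow$ with each $x_i\in U$, then any open cover of $U$ contains, for each $i$, a member $U_{\alpha_i}\ni x_i$; being a down-set, $U_{\alpha_i}$ contains $x_i^\downarrow$, so $U_{\alpha_1},\dots,U_{\alpha_n}$ already cover $U$ and $U$ is quasi-compact. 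Conversely, if $U$ is quasi-compact I would cover it by the open sets $x^\downarrow$ for $x\in U$ (whose union is $U^\downarrow=U$) and extract a finite subcover $U=x_1^\downarrow\cup\dots\cup x_n^\downarrow$ with $x_i\in U$ and $n\geq 1$ by nonemptiness. I expect the only genuine subtlety across the whole lemma to be the direction of implication in the finest-topology claim of part (2); parts (1), (3) and (4) are then formal once the up-set/down-set duality and the minimal-neighborhood description $x^\downarrow$ are in hand.
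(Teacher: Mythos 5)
Your parts (1), (3) and (4) are correct and complete, and there is nothing to compare them against: the paper gives no proof of this lemma at all (it is dismissed as ``easy consequences of the definitions'' with a citation to Dobbs--Fontana--Papick), so your write-up simply supplies the routine verification --- the four Kuratowski axioms with transitivity giving idempotence, the up-set/down-set complementation for (3), and the observation that $x^\downarrow$ is the smallest L-open neighbourhood of $x$, from which both directions of (4) follow immediately. That is the standard (essentially the only) argument.

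Part (2), however, has a genuine problem, located exactly at the spot you yourself flagged as delicate. Your key step asserts that ``compatibility forces every $y$ with $x\leq y$ to lie in $\Cl^{\ms T}(\{x\})$'', i.e.\ the implication $x\leq y \Rightarrow y\in \Cl^{\ms T}(\{x\})$. But the paper defines compatibility as the \emph{reverse} implication: $y\in \Cl^{\ms T}(\{x\})$ implies $x\leq y$. Relative to that definition your step is a non sequitur, and no repair is possible, because under the literal definition statement (2) is simply false: in the discrete topology one has $\Cl(\{x\})=\{x\}$, so $y\in\Cl(\{x\})$ gives $y=x$ and hence $x\leq y$ by reflexivity; thus the discrete topology is compatible with \emph{every} preorder, and it is in general strictly finer than the L-topology. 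The lemma is true only when ``compatible'' is read as the biconditional --- the specialization preorder induced by $\ms T$ coincides with $\leq$ --- which is the definition used in the cited sources (Lewis--Ohm, Dobbs--Fontana--Papick) and is also what the paper's subsequent remark about the COP topology being the \emph{coarsest} compatible topology requires. With that reading, your argument is exactly right: for $\ms T$-closed $F$ and $x\in F$ one gets $x^\uparrow\subseteq \Cl^{\ms T}(\{x\})\subseteq F$, so every $\ms T$-closed set is an up-set, hence L is finer than $\ms T$, and L itself is compatible since $\Cl^L(\{x\})=x^\uparrow$. So your proof is the correct proof of the intended statement, but you must say explicitly that you are using the ``if and only if'' form of compatibility (equivalently, the converse of the displayed implication); as written, the appeal to the paper's definition proves nothing.
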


\begin{rk} \rm In relation with Lemma \ref{L-R}(2), note that the COP (or, Closure Of Points) topology \cite{leoh-76} is the coarsest topology on $X$ compatible with a given order on $X$. 
\end{rk}

Recall that a topological space $\ms X$ is an {\it Alexandroff-discrete  space} if it is  \T$_0$ and for each subset $Y $ of $\ms X$ the closure of $Y$ coincides with the union of the closures of its points \cite[page 28]{al-56}.  Therefore, if $(X, \leq)$ is a partially ordered set, then the L-topology (or the R-topology) determines on $X$ the structure of  an Alexandroff-discrete space.

\medskip
If $\ms X$ is a \T$_0$ topological space, then the L-topology on $\ms X$, associated to the partial order defined by the given topology on $\ms X$, is finer than the original topology of $\ms X$, since for each $Y \subseteq \ms X$, $\Cl^L(Y) \subseteq \Cl(Y)$.  Moreover, even if $\ms X$  is a spectral space,  $\ms X^L$ (i.e., $\ms X$ equipped with the L-topology) is not spectral in general. For example, a spectral space having infinitely many closed points may not be quasi-compact with respect to the L-topology by Lemma \ref{L-R}(4) (e.g., $\ms X := \Spec(\mathbb Z) = \bigcup \{(p )^\downarrow \mid (p ) \mbox{ is a nonzero prime ideal of } \mathbb Z\}$ is an open cover of $\ms X$ endowed with the L-topology without a finite open subcover). 
\medskip

Before stating a result providing  a complete answer to the question of when the L-topology determines  a spectral space (see Theorem \ref{th-dolopa}),  we recall a useful application of the L-topology
showing that the constructible closure and the closure by specializations  (or, L-closure) determines the structural closure in a spectral space (see, for instance,  \cite[Corollary to Theorem 1]{ho-69}, \cite[Lemma 1.1]{fo-80} or \cite[Proposition 3.1(a)]{dofopa-80}).

\begin{lem} \label{special}
Let $\ms X$ be a spectral space. For each subset $Y$ of $\ms X$, 
$$ \Cl(Y) = \Cl^L(\Cl^{\mbox{\tiny{\texttt{cons}}}}(Y))\,.$$
\end{lem}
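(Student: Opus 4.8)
The plan is to prove the two inclusions separately, relying on three standard features of a spectral space $\ms X$: the constructible topology is finer than the original (Zariski) one, so that $\Cl^{\mbox{\tiny{\texttt{cons}}}}(Y) \subseteq \Cl(Y)$ for every subset $Y$; the quasi-compact open subsets form a basis of $\ms X$ that is stable under finite intersection, and each of them is clopen in $\ms X^{\mbox{\tiny{\texttt{cons}}}}$ (as recalled in Section 2); and $\ms X^{\mbox{\tiny{\texttt{cons}}}}$ is compact. Throughout I would exploit the order-theoretic description of the two closures in play: $\Cl^L(S) = S^\uparrow = \bigcup_{s \in S} \Cl(\{s\})$, while an open set $U$ satisfies $U = U^\downarrow$, whence $x^\downarrow = \bigcap \{U \mid U \text{ quasi-compact open}, \ x \in U\}$.

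For the inclusion $\Cl^L(\Cl^{\mbox{\tiny{\texttt{cons}}}}(Y)) \subseteq \Cl(Y)$, I would first observe that $\Cl(Y)$ is closed in the original topology, hence stable under specialization, i.e. $\Cl(Y)^\uparrow = \Cl(Y)$. Combining this with $\Cl^{\mbox{\tiny{\texttt{cons}}}}(Y) \subseteq \Cl(Y)$ yields $\Cl^L(\Cl^{\mbox{\tiny{\texttt{cons}}}}(Y)) = \Cl^{\mbox{\tiny{\texttt{cons}}}}(Y)^\uparrow \subseteq \Cl(Y)^\uparrow = \Cl(Y)$, which settles the easy direction.

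For the reverse inclusion $\Cl(Y) \subseteq \Cl^L(\Cl^{\mbox{\tiny{\texttt{cons}}}}(Y))$, I would fix $x \in \Cl(Y)$; the goal is to produce a point $z \in \Cl^{\mbox{\tiny{\texttt{cons}}}}(Y)$ with $z \leq x$, for then $x \in z^\uparrow \subseteq \Cl^L(\Cl^{\mbox{\tiny{\texttt{cons}}}}(Y))$. Consider the family $\{U \cap \Cl^{\mbox{\tiny{\texttt{cons}}}}(Y)\}$ indexed by the quasi-compact open neighborhoods $U$ of $x$. Each such set is closed in $\ms X^{\mbox{\tiny{\texttt{cons}}}}$, since $U$ is clopen and $\Cl^{\mbox{\tiny{\texttt{cons}}}}(Y)$ is closed there. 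I claim this family has the finite intersection property: a finite intersection of quasi-compact open neighborhoods of $x$ is again a quasi-compact open neighborhood $U$ of $x$, and since $x \in \Cl(Y)$ the open set $U$ meets $Y$, so $\emptyset \neq U \cap Y \subseteq U \cap \Cl^{\mbox{\tiny{\texttt{cons}}}}(Y)$. By compactness of $\ms X^{\mbox{\tiny{\texttt{cons}}}}$ the total intersection is then nonempty, and by the identity $\bigcap_U U = x^\downarrow$ it equals $x^\downarrow \cap \Cl^{\mbox{\tiny{\texttt{cons}}}}(Y)$. Any point $z$ in this intersection satisfies $z \leq x$ and $z \in \Cl^{\mbox{\tiny{\texttt{cons}}}}(Y)$, which is exactly what is needed.

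The main obstacle is the reverse inclusion, and within it the passage from the finite intersection property to a genuine common point: this is where compactness of the constructible topology is indispensable, since the original topology is in general only quasi-compact and not Hausdorff and hence would not suffice. The two bookkeeping facts that quasi-compact opens are clopen in $\ms X^{\mbox{\tiny{\texttt{cons}}}}$ and stable under finite intersection are what make the chosen family of closed sets behave well. Verifying the finite intersection property itself is routine once one recalls that membership of $x$ in $\Cl(Y)$ is precisely the assertion that every (quasi-compact) open neighborhood of $x$ meets $Y$.
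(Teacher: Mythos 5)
Your proof is correct. The paper does not actually prove Lemma \ref{special} --- it recalls it with citations to Hochster, Fontana, and Dobbs--Fontana--Papick --- but your argument is essentially the standard one from those sources, and the same technique (a family with the finite intersection property of sets closed in $\ms X^{\mbox{\tiny{\texttt{cons}}}}$, cut out by quasi-compact open neighborhoods of $x$, together with compactness of $\ms X^{\mbox{\tiny{\texttt{cons}}}}$ and the identity $x^\downarrow=\bigcap\{U \mid U \mbox{ open and quasi-compact, } x\in U\}$) is exactly what the paper itself deploys in proving part (4) of its lemma on going-down spectral maps.
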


\medskip

Let $(X, \leq)$ be a preordered set and denote by $ \Max(X)$ (respectively, $ \Min(X)$) the set of all maximal (respectively,  minimal) elements of $X$. In particular, if  
$\ms X$ is a topological space, we denote by $\Max(\ms X)$ (respectively, $\Min(\ms X)$ the set of all maximal (respectively,  minimal) points of a topological space $\ms X$, with respect to the preorder $\leq$ induced by the topology of $\ms X$. It follows immediately by definition that 
$$x\in \Max(\ms X) \; \Leftrightarrow  \;\{x\} \mbox{ is closed in } \ms X \; \Leftrightarrow  \;\{x\} \mbox{ is closed in } \ms X^L \,.$$
From the order-theoretic point of view, we have the following.

\begin{lem}\label{min-max}  Let $(X, \leq)$ be a  partially ordered set. 
\begin{enumerate}
\item[\emph{(1)}] The following conditions are equivalent:
{\begin{enumerate}
\item[\emph{(i)}] $x$ is a closed point in $X^L$;
\item[\emph{(ii)}] $x \in \Max(X)$;
\item[\emph{(iii)}]  $x$ is an open point in $X^R$.
\end{enumerate}}

\item[\emph{(2)}] The following conditions are equivalent:
{\begin{enumerate}
\item[\emph{(i)}] $x$ is an open point in $X^L$;
\item[\emph{(ii)}] $x \in \Min(X)$;
\item[\emph{(iii)}]  $x$ is a closed point in $X^R$.
\end{enumerate}}
\item[\emph{(3)}] $X^L$ (respectively, $X^R$) is a \T$_1$ topological space if and only if $X^L$ (respectively, $X^R$) 
is a discrete space.
\end{enumerate}
\end{lem}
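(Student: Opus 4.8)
The plan is to establish parts (1) and (2) by unwinding the definitions of the $L$- and $R$-topologies (Lemma \ref{L-R}) in terms of the partial order, and then to deduce part (3) from part (1) together with an observation about $\T_1$ spaces in the Alexandroff setting. I would begin with part (1), since parts (1) and (2) are formally dual: the $R$-topology is exactly the $L$-topology associated to the \emph{opposite} order, so once (1) is proved, (2) follows verbatim by replacing $\leq$ with its reverse and swapping $\Max$ with $\Min$. This duality lets me concentrate essentially all the work on a single pair of equivalences.

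For the equivalence (i) $\Leftrightarrow$ (ii) of part (1), recall from Lemma \ref{L-R}(1) that in $X^L$ the closure of a point $x$ is $\Cl^L(\{x\}) = \{x\}^\uparrow = \{y \in X \mid x \leq y\}$. Hence $\{x\}$ is closed in $X^L$ precisely when $x^\uparrow = \{x\}$, i.e.\ when no $y \neq x$ satisfies $x \leq y$; since $\leq$ is a partial order, this says exactly that $x$ is maximal in $X$, which is the definition of $x \in \Max(X)$. For (ii) $\Leftrightarrow$ (iii), I would use Lemma \ref{L-R}(3): a point $x$ is open in $X^R$ if and only if $\{x\} = \{x\}^\downarrow$ (the criterion for $R$-open sets), and $x^\downarrow = \{y \mid y \leq x\} = \{x\}$ is again equivalent, by antisymmetry, to $x$ being maximal. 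This closes the cycle of three equivalences for part (1). Part (2) is then immediate by the order-reversing duality described above.

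For part (3), the claim is that $X^L$ is $\T_1$ if and only if it is discrete, and likewise for $X^R$. The implication from discrete to $\T_1$ is trivial, so the content is the forward direction. Here I would argue that $X^L$ being $\T_1$ means every singleton is closed; by the equivalence (i) $\Leftrightarrow$ (ii) already established in part (1), this forces every element of $X$ to be maximal. But if every element is maximal, the partial order is trivial (i.e.\ $x \leq y$ implies $x = y$), so that $\Cl^L(\{x\}) = x^\uparrow = \{x\}$ for all $x$, and more generally every subset $Y$ satisfies $\Cl^L(Y) = Y^\uparrow = Y$, meaning every subset is closed; thus $X^L$ is discrete. The $R$-case follows by the same duality.

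I do not anticipate a genuine obstacle here, as the statement is essentially a dictionary translation between order-theoretic and topological language; the only point requiring care is the consistent use of \emph{antisymmetry} of the partial order, which is what makes the maximality conditions biconditional rather than merely one-directional, and keeping the $L/R$ duality bookkeeping straight so that part (2) is not reproved from scratch but genuinely inherited from part (1).
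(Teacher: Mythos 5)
The paper states this lemma without proof (it is offered as an immediate consequence of the definitions and of Lemma \ref{L-R}), so your proposal only has to be checked for correctness on its own terms. Your overall plan --- prove (1), obtain (2) by the opposite-order duality, and deduce (3) from the closed-point criterion --- is the natural one, and the step (1)(i)$\Leftrightarrow$(ii) as well as part (3) are argued correctly. However, the step (ii)$\Leftrightarrow$(iii) of part (1) is wrong as written, in two compensating ways. First, you quote Lemma \ref{L-R}(3) backwards: the criterion for a subset $U$ to be open in the R-topology is $U = U^\uparrow$ (the R-closed sets are the down-sets, since $\Cl^R(Y) = Y^\downarrow$, so the R-open sets are their complements, the up-sets); the condition $U = U^\downarrow$ that you invoke is the criterion for openness in the \emph{L}-topology. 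Second, you then assert that $x^\downarrow = \{y \mid y \leq x\} = \{x\}$ is ``equivalent, by antisymmetry, to $x$ being maximal'': it is not --- $x^\downarrow = \{x\}$ says that no $y \neq x$ lies \emph{below} $x$, i.e. it characterizes $x \in \Min(X)$. These two mistakes cancel each other, so the equivalence you end up claiming is true, but neither intermediate assertion is; as reasoning, the step fails.

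The repair is one line: $\{x\}$ is open in $X^R$ if and only if $\{x\} = \{x\}^\uparrow$ (Lemma \ref{L-R}(3)), and by antisymmetry $x^\uparrow = \{x\}$ holds if and only if $x \in \Max(X)$. Since your part (2) is inherited from part (1) by reversing the order, the same correction propagates there (with $\Max$ replaced by $\Min$ and the roles of $\uparrow$ and $\downarrow$ exchanged). With this fix, the rest of the proposal stands: part (3) only uses the equivalence between closedness of points and maximality (respectively, minimality), i.e. (1)(i)$\Leftrightarrow$(ii) and its dual, together with the observation that a partial order in which every element is maximal is trivial, which makes every subset an up-set and hence L-closed.
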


\medskip

\begin{thm}\label{th-dolopa}
 \emph{(Dobbs-Fontana-Papick \cite[Theorem 2.4]{dofopa-80})} Let  $X$  be  a partially  ordered  set.  Then  $X$  with the  L-topology is a spectral space  if  and  only 
if the   following  four  properties  hold: 
\begin{enumerate}
\item[\emph{($\alpha$)}]
Each  nonempty  totally  ordered  subset $Y$ of   $X$  has  a  $\sup$; 

\item[\emph{($\beta$)}]  X  satisfies  the  following condition:
{\begin{enumerate}
\item[\emph{(filtr$^L$)}]  each  nonempty  lower-directed  subset  $Y$  of  $X$  has  a  greatest  lower 
bound  $y := \inf (Y)$  such  that $ y^\uparrow =  Y^\uparrow$; 
\end{enumerate}}
\item[\emph{($\gamma$)}]  $\Card(\Max (X))$ is finite;  
\item[\emph{($\delta$)}] For  each  pair  of distinct  elements  $x$  and  $y$  of  $X$,  there  exist  at  most  finitely 
many  elements  of  $X$  which  are  maximal  in  the  set  of  common  lower  bounds 
of  $x$  and  $y$. 
\end{enumerate}

\end{thm}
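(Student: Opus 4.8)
The plan is to verify Hochster's criterion \cite{ho-69}: a topological space is spectral precisely when it is quasi-compact, \T$_0$, sober, admits a basis of quasi-compact open sets, and the intersection of two quasi-compact open sets is again quasi-compact. Two of these hold automatically for $X^L$: since $\leq$ is a partial order, $X^L$ is \T$_0$ (separate $x\neq y$ by one of the open sets $x^\downarrow,y^\downarrow$), and by Lemma \ref{L-R}(3) the open sets of $X^L$ are exactly the downward-closed sets, so $\{x^\downarrow\mid x\in X\}$ is a basis, each member being quasi-compact by Lemma \ref{L-R}(4). The whole problem thus reduces to matching the three remaining axioms with the four order conditions. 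The reading I would use throughout is that, by Lemma \ref{L-R}(4), the quasi-compact open sets are precisely the finite unions $x_1^\downarrow\cup\dots\cup x_n^\downarrow$, and that a closed (i.e.\ upward-closed) set $F$ is irreducible if and only if it is \emph{down-directed}; in that case a generic point of $F$ is exactly a least element of $F$.

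For the implication ($\alpha$)--($\delta$) $\Rightarrow$ spectral I would argue axiom by axiom. Condition ($\alpha$) lets me invoke Zorn's Lemma (chains have suprema, hence upper bounds), so every element lies below a maximal one and $X=\bigcup_{m\in\Max(X)}m^\downarrow$; by ($\gamma$) this is a finite union, giving quasi-compactness. For the intersection axiom it suffices to treat the set $L(a,b):=a^\downarrow\cap b^\downarrow$ of common lower bounds of $a,b$: using ($\alpha$) (the supremum of a chain of common lower bounds is again a common lower bound) and Zorn, every element of $L(a,b)$ lies below a maximal common lower bound, and by ($\delta$) there are finitely many of these, so $L(a,b)$ is a finite union of sets $c^\downarrow$ and hence quasi-compact. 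Finally ($\beta$) yields sobriety: an irreducible closed set is a nonempty down-directed up-set $F$, in particular lower-directed, so ($\beta$) produces $y=\inf(F)$ with $y^\uparrow=F^\uparrow=F$; then $y\in F$ is a least element, i.e.\ a generic point, unique because $X^L$ is \T$_0$.

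For the converse the three easy conditions fall out directly. Quasi-compactness forces $X=x_1^\downarrow\cup\dots\cup x_n^\downarrow$, and every maximal element must coincide with one of the $x_i$, giving ($\gamma$). Applying the intersection axiom to $a^\downarrow$ and $b^\downarrow$ shows $L(a,b)$ is quasi-compact open, hence a finite union of sets $c^\downarrow$ by Lemma \ref{L-R}(4), so it has only finitely many maximal elements, which is ($\delta$). Sobriety gives ($\beta$): for a nonempty lower-directed $Y$, the set $Y^\uparrow$ is a down-directed up-set, hence an irreducible closed set, so it has a generic point $y$ with $y^\uparrow=Y^\uparrow$; one then checks $y=\inf(Y)$, using that $y\in Y^\uparrow$ supplies some $z_0\in Y$ with $z_0\leq y$ to dominate any lower bound of $Y$.

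The real obstacle is deriving ($\alpha$) from spectrality, and this is where I expect to spend the effort. Given a nonempty chain $C$, the upper-bound set $U(C)=\bigcap_{c\in C}c^\uparrow$ is closed, and it is nonempty because the closed sets $c^\uparrow$ have the finite intersection property (a finite subfamily $\{c^\uparrow\mid c\in C_0\}$ meets in $(\max C_0)^\uparrow\neq\emptyset$) and $X^L$ is quasi-compact. I would then show $U(C)$ is down-directed, hence irreducible, so that sobriety supplies a generic point $s$ with $s^\uparrow=U(C)$, which is exactly $\sup(C)$. To see down-directedness, take upper bounds $a,b$ of $C$; then $C\subseteq L(a,b)=a^\downarrow\cap b^\downarrow$, which is quasi-compact by the intersection axiom, so $L(a,b)=\ell_1^\downarrow\cup\dots\cup\ell_k^\downarrow$. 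A cofinality/pigeonhole argument on the chain $C$ (the sets $C_j:=\{c\in C\mid c\leq\ell_j\}$ cover $C$, and since $C$ is a chain and there are finitely many indices, some $C_{j_0}$ is cofinal in $C$) shows that the corresponding $\ell_{j_0}$ is simultaneously an upper bound of $C$ and a common lower bound of $a,b$, i.e.\ $\ell_{j_0}\in L(a,b)\cap U(C)$. Thus any two elements of $U(C)$ have a common lower bound inside $U(C)$. It is precisely this step that uses the full force of the intersection axiom; indeed a space can be quasi-compact and satisfy ($\gamma$), ($\delta$), ($\beta$) yet fail ($\alpha$), in which case $U(C)$ need not be irreducible and $\sup(C)$ may fail to exist.
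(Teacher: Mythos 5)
Your proposal is correct, and on the sufficiency side it is exactly the paper's route: the survey disposes of that direction with the single remark that one verifies Hochster's characterization theorem, which is what you do (quasi-compactness from ($\alpha$)+($\gamma$) via Zorn, the intersection axiom from ($\alpha$)+($\delta$) via the sets $L(a,b)$ of common lower bounds, sobriety from ($\beta$), with \T$_0$ and the basis of sets $x^\downarrow$ coming for free from Lemma \ref{L-R}). The genuine divergence is in the necessity direction, above all for ($\alpha$). The paper, following Dobbs--Fontana--Papick, gets necessity of ($\alpha$) by citing Kaplansky \cite[Theorem 9]{ka-74}, i.e.\ the ring-theoretic fact that the union of a chain of prime ideals is prime; this presupposes representing the spectral space $X^L$ as $\Spec(R)$ for some ring $R$, which is the hard direction of Hochster's theorem. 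Your argument stays entirely inside the axiomatic, order-topological description of spectral spaces: nonemptiness of the upper-bound set $U(C)=\bigcap_{c\in C}c^\uparrow$ from quasi-compactness and the finite intersection property, down-directedness of $U(C)$ from the intersection axiom together with the cofinality/pigeonhole argument on the chain (which I checked and which is sound: if no $C_j$ were cofinal, the maximum of finitely many witnesses $c_j$ would lie in some $C_{j_1}$ and be strictly below $c_{j_1}$, a contradiction), and finally sobriety to produce the generic point $s$ with $s^\uparrow=U(C)$, i.e.\ $s=\sup C$. This is genuinely different and more self-contained: it avoids importing Hochster's representation theorem and any ring theory, at the price of the fiddly pigeonhole step, and it makes transparent exactly which spectral axioms feed into ($\alpha$). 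Likewise your derivation of ($\beta$) directly from sobriety (irreducible closed sets in $X^L$ are precisely the nonempty down-directed up-sets, and a generic point is a least element) is intrinsic where the paper cites Hochster's \cite[Proposition 5]{ho-69}; your treatments of ($\gamma$) and ($\delta$) coincide with the paper's hints (quasi-compactness of $X^L$, and Lemma \ref{L-R}(4), respectively). The two small facts you assert without proof (the characterization of irreducible closed sets, and the identification of quasi-compact opens) are one-line verifications, so I see no gap.
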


The necessity of condition  ($\alpha$)  follows from \cite[Theorem 9]{ka-74}, that of condition  ($\beta$) uses \cite[Proposition 5]{ho-69} and the necessity of condition  ($\delta$) is related to Lemma \ref{L-R}(4); condition  ($\gamma$) holds in any  Alexandroff-discrete space. The sufficiency  of ($\alpha$)--($\delta$) results by verifying the conditions of Hochster's characterization theorem \cite{ho-69}.

\medskip

Using the opposite order,  from Theorem \ref{th-dolopa} we can  easily deduce a characterization of when  a partially  ordered  set with the  R-topology is a spectral space. 
\medskip

Given a spectral space $\ms X$, the following proposition gives a complete answer to the question of when the continuous map $\ms X^L \rightarrow \ms X$ (where $\ms X^L $ denotes the topological space $\ms X$ equipped with the L-topology, associated to the partial order defined by the given topology on $\ms X$) is a homeomorphism. In particular, in this situation,  $\ms X^L$ is a spectral space. 

\begin{prop}\label{X=XL}
 \emph{(Picavet \cite[V, Proposition 1]{pi-75},  Dobbs-Fontana-Papick \cite[Theorem 3.3]{dofopa-80})} Let $\ms X$ be a spectral space. The following are equivalent.
\begin{enumerate}
\item[\emph{(i)}]  $\ms X^L = \ms X$.

\item[\emph{(ii)}] For each $x \in X$, $x^\downarrow$ is a quasi-compact open subset of $\ms X$.

\item[\emph{(iii)}] For each family $\{U_\lambda \mid \lambda \in \Lambda\} $ of quasi-compact open subsets of  $\ms X$, the set 
 $\bigcap \{U_\lambda \mid \lambda \in \Lambda\} $ is still a quasi-compact open subset of $\ms X$.

\item[\emph{(iv)}]  Each increasing sequence of irreducible closed subsets of  $\ms X $ stabilizes 
and, \ for each family $\{U_\lambda \mid \lambda \in \Lambda\} $ of quasi-compact open subsets of the space \  $\ms X$, \newline
$\Card(\Max(\bigcap \{U_\lambda \mid \lambda \in \Lambda\}))$ is finite.
\end{enumerate}
\end{prop}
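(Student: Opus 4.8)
\emph{The plan is to prove the cycle} (i) $\Leftrightarrow$ (ii) $\Leftrightarrow$ (iii) first, and then (iii) $\Leftrightarrow$ (iv), working throughout with the description of the L\nobreakdash-topology via the specialization order. The key opening observation is that every $\ms X$-open set is a down-set, while by Lemma \ref{L-R}(3) the L-open sets are \emph{all} the down-sets; hence the given topology is always coarser than the L-topology, and therefore (i) holds if and only if every down-set is $\ms X$-open, equivalently (by taking unions) if and only if each principal down-set $x^\downarrow$ is $\ms X$-open. Two structural facts will be used repeatedly. First, in a spectral space $x^\downarrow=\bigcap\{U\mid U\in\mathring{\mathcal K}(\ms X),\ x\in U\}$ is an intersection of quasi-compact open sets. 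Second, \emph{any} intersection $W:=\bigcap_\lambda U_\lambda$ of quasi-compact open sets is automatically quasi-compact: each $U_\lambda$ is clopen in the compact space $\ms X^{\mbox{\tiny{\texttt{cons}}}}$, so $W$ is closed, hence compact, in $\ms X^{\mbox{\tiny{\texttt{cons}}}}$, and its image under the continuous identity $\ms X^{\mbox{\tiny{\texttt{cons}}}}\to\ms X$ is quasi-compact. Consequently the only genuine content of (ii) and (iii) is the \emph{openness} of the down-sets involved.

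With this in hand the first three conditions fall out quickly. For (i) $\Rightarrow$ (ii), the set $x^\downarrow$ is L-open and trivially quasi-compact in the L-topology (any L-open cover has a member containing $x$, hence containing $x^\downarrow$), so under (i) it is a quasi-compact open subset of $\ms X$. For (ii) $\Rightarrow$ (i), if each $x^\downarrow$ is $\ms X$-open then every down-set $\bigcup_z z^\downarrow$ is open, giving $\ms X^L\subseteq\ms X$ and hence equality. For (iii) $\Rightarrow$ (ii) one only recalls that $x^\downarrow$ is itself an intersection of quasi-compact open sets. Finally, for (ii) $\Rightarrow$ (iii), the set $W=\bigcap_\lambda U_\lambda$ equals the down-set $\bigcup_{z\in W}z^\downarrow$, hence is open by (ii), and is quasi-compact by the fact recorded above.

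Next I would prove (iii) $\Rightarrow$ (iv). Finiteness of $\Card(\Max(W))$ follows from Lemma \ref{L-R}(4): since $\ms X=\ms X^L$, the quasi-compact open set $W$ is quasi-compact in the L-topology, so $W=x_1^\downarrow\cup\cdots\cup x_n^\downarrow$ for some $x_i\in W$, and every maximal point of $W$ must be one of the $x_i$. For the stabilization of an increasing sequence $C_1\subseteq C_2\subseteq\cdots$ of irreducible closed sets, I pass to the generic points $\eta_i$ (the space is sober), so that $\eta_1\geq\eta_2\geq\cdots$ and the sets $\eta_i^\downarrow$ are decreasing and clopen in $\ms X^{\mbox{\tiny{\texttt{cons}}}}$. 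By (iii) the intersection $W=\bigcap_i\eta_i^\downarrow$ is quasi-compact open, hence clopen in $\ms X^{\mbox{\tiny{\texttt{cons}}}}$, so the decreasing closed sets $\eta_i^\downarrow\setminus W$ have empty intersection; compactness of $\ms X^{\mbox{\tiny{\texttt{cons}}}}$ forces $\eta_N^\downarrow=W$ for some $N$, whence $\eta_N\leq\eta_i$ for all $i$ and the chain stabilizes.

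The remaining implication (iv) $\Rightarrow$ (i) is where I expect the real difficulty. I would first show that, for every $W=\bigcap_\lambda U_\lambda$, each point lies below a maximal point of $W$: given an ascending chain $w_1\leq w_2\leq\cdots$ in $W$, the sets $\Cl(\{w_i\})\cap W$ are nonempty, decreasing, and closed in the compact space $\ms X^{\mbox{\tiny{\texttt{cons}}}}$, so they share a point $w^{*}\in W$ with $w_i\leq w^{*}$ for all $i$, and Zorn's lemma supplies the maximal point. Together with the finiteness in (iv) this yields $W=x_1^\downarrow\cup\cdots\cup x_n^\downarrow$ with $x_i\in\Max(W)$, reducing (i) to the openness of a single $x^\downarrow$ in $\ms X$. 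Here conditions ($\gamma$) and ($\delta$) of Theorem \ref{th-dolopa} come for free from (iv), since $\Max(\ms X)$ and $\Max(x^\downarrow\cap y^\downarrow)$ are maximal-point sets of intersections of quasi-compact opens, and one checks ($\alpha$), ($\beta$) from the quasi-compactness of $\ms X$ and the stabilization hypothesis, so that $\ms X^L$ is spectral. \emph{The hard part is upgrading this to the equality} $\ms X^L=\ms X$, i.e.\ proving $\Cl(\ms X\setminus x^\downarrow)=\ms X\setminus x^\downarrow$: by Lemma \ref{special} this amounts to showing that no point of $x^\downarrow$ is a patch-limit of points outside $x^\downarrow$, and it is precisely at this point that the full force of the stabilization of increasing irreducible closed sets must be invested.
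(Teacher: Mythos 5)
Your treatment of (i) $\Leftrightarrow$ (ii) $\Leftrightarrow$ (iii) and of (iii) $\Rightarrow$ (iv) is correct: the reduction of everything to openness of the principal generization-sets $x^\downarrow$, the automatic quasi-compactness of arbitrary intersections of quasi-compact opens via compactness of the constructible topology, and the stabilization argument through generic points are all sound. (Note that the survey itself gives no proof of this proposition --- it cites Picavet and Dobbs--Fontana--Papick --- so your argument stands or falls on its own.)

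The genuine gap is the implication (iv) $\Rightarrow$ (i), which you set up but, by your own admission, do not prove; moreover, the route you propose for it cannot be completed as described. You reduce (i) to the openness of each $x^\downarrow$ and then invoke Theorem \ref{th-dolopa} to conclude that $\ms X^L$ is spectral. That intermediate conclusion is indeed reachable (conditions ($\gamma$) and ($\delta$) are special cases of the finiteness in (iv); ($\alpha$) holds in every spectral space; and ($\beta$) follows because, by sobriety, stabilization of increasing irreducible closed sets is the descending chain condition on $(\ms X,\leq)$, so every lower-directed subset has a minimum). But spectrality of $\ms X^L$ is \emph{strictly weaker} than $\ms X^L=\ms X$, so this detour buys nothing: take $\ms X=\Spec(K[x,y]_{(x,y)})$, with generic point $(0)$, infinitely many height-one primes, and the maximal ideal $\f m$ on top. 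Conditions ($\alpha$)--($\delta$) all hold (chains are finite, $\Max(\ms X)=\{\f m\}$, and the common lower bounds of any two distinct points have a unique maximal element), so $\ms X^L$ is spectral by Theorem \ref{th-dolopa}; yet $\ms X^L\neq\ms X$, since $\f p^\downarrow=\{(0),\f p\}$ is not Zariski-open when $\f p$ has height one. Consistently, (iv) fails in this example: if $S$ is an infinite set of height-one primes, then $\Cl^{\mbox{\tiny{\texttt{cons}}}}(S)=S\cup\{(0)\}$ is an intersection of quasi-compact open sets with infinitely many maximal points. Thus the step you label ``the hard part'' --- deducing, from the finiteness hypothesis of (iv) applied to \emph{arbitrary} intersections of quasi-compact opens and not merely to pairs, that each $x^\downarrow$ is patch-open (equivalently, via Lemma \ref{special}, that $\ms X\setminus x^\downarrow$ is closed) --- is not a final upgrade of what you established but is the entire content of (iv) $\Rightarrow$ (i), and it is missing.
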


The previous proposition gives the motivation for studying the rings $A$ such that, for each $P \in \Spec(A)$, the canonical map $\Spec(A_P) \hookrightarrow \Spec(A)$ is open. This class of rings was introduced by G. Picavet in 1975 (\cite{pi-75}  and \cite{pi-75a}) under the name of \emph{g-ring} and it can be shown that  a spectral space $\ms X$ is such that $\ms X^L = \ms X$ if and only if $\ms X$ is homeomorphic to the prime spectrum of a g-ring \cite[V, Proposition 1]{pi-75}.

\bigskip

 When $\ms X$ is a spectral space, Hochster \cite{ho-69} introduced a new topology on $\ms X$, called the inverse topology. If we denote by  $\ms X^{\mbox{\tiny{\texttt{inv}}}}$, the set $\ms X$ equipped with the inverse topology, Hochster proved that $\ms X^{\mbox{\tiny{\texttt{inv}}}}$ is still a spectral space and the partial order on $\ms X$ induced by the inverse topology is the opposite order of that induced by the given topology on $\ms X$. More precisely:
 
 \begin{prop} \label{inv} 
 \emph{(Hochster \cite[Proposition 8]{ho-69})} Let $\ms X$ be a spectral space. For each subset $Y$ of $\ms X$, set:
 $$
 \Cl^{\mbox{\tiny{\texttt{inv}}}}(Y) :=
\bigcap \{U   \mid  \mbox{ $U$ open and quasi-compact in }\ms X,  \; 
U  \supseteq Y \}\,.
$$
\begin{enumerate}[\rm (1)]
\item $\Cl^{\mbox{\tiny{\texttt{inv}}}}$ satisfies the  Kuratowski closure axioms and so it defines a topological structure on $\ms X$, called the \emph{inverse topology}; denote by $\ms X^{\mbox{\tiny{\texttt{inv}}}}$  the set $\ms X$ equipped with the inverse topology.
\item   The partial order on $\ms X$ induced by the inverse topology is the opposite order of that induced by the given topology on $\ms X$.
\item $\ms X^{\mbox{\tiny{\texttt{inv}}}}$ is a spectral space.
\end{enumerate}
  \end{prop}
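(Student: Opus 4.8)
The plan is to dispose of the three assertions in order, using the constructible (patch) topology $\ms X^{\mbox{\tiny{\texttt{cons}}}}$ as the main technical device, and to conclude (3) by checking Hochster's criterion that $\ms X^{\mbox{\tiny{\texttt{inv}}}}$ is quasi-compact, $\T_0$, sober, and has a basis of quasi-compact open sets stable under finite intersection. For (1), I would verify the four Kuratowski axioms directly from $\Cl^{\mbox{\tiny{\texttt{inv}}}}(Y)=\bigcap\{U\mid U\in\mathring{\mathcal K}(\ms X),\ U\supseteq Y\}$. That $\Cl^{\mbox{\tiny{\texttt{inv}}}}(\emptyset)=\emptyset$ and $Y\subseteq\Cl^{\mbox{\tiny{\texttt{inv}}}}(Y)$ are immediate, since $\emptyset\in\mathring{\mathcal K}(\ms X)$ and every $U$ in the intersection contains $Y$. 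Preservation of finite unions is the only axiom with content: the inclusion $\supseteq$ is monotonicity, while for $\subseteq$ one uses that a finite union of quasi-compact open sets is again quasi-compact and open, so if a point escapes some $U\supseteq Y$ and some $V\supseteq Z$, it escapes $U\cup V\supseteq Y\cup Z$. Idempotency follows from the remark that a set $U\in\mathring{\mathcal K}(\ms X)$ contains $Y$ if and only if it contains $\Cl^{\mbox{\tiny{\texttt{inv}}}}(Y)$, so the intersections defining $\Cl^{\mbox{\tiny{\texttt{inv}}}}(Y)$ and $\Cl^{\mbox{\tiny{\texttt{inv}}}}(\Cl^{\mbox{\tiny{\texttt{inv}}}}(Y))$ coincide.

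For (2), note that $\Cl^{\mbox{\tiny{\texttt{inv}}}}(\{x\})=\bigcap\{U\in\mathring{\mathcal K}(\ms X)\mid x\in U\}$, so $y\in\Cl^{\mbox{\tiny{\texttt{inv}}}}(\{x\})$ precisely when every quasi-compact open set containing $x$ also contains $y$. Since the quasi-compact open sets form a basis for the spectral topology, this is equivalent to every open neighborhood of $x$ containing $y$, i.e. to $x\in\Cl(\{y\})$. Unwinding the defining relation $x\leq y\Leftrightarrow y\in\Cl(\{x\})$, this says exactly that the preorder induced by the inverse topology is the opposite of the one induced by the original topology.

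For (3), I would first record the easy conditions. Each $U\in\mathring{\mathcal K}(\ms X)$ is inverse-closed (it occurs among the sets it is intersected over), so its complement $\ms X\setminus U$ is inverse-open; these complements form a basis for $\ms X^{\mbox{\tiny{\texttt{inv}}}}$, because every inverse-closed set is by definition an intersection of members of $\mathring{\mathcal K}(\ms X)$. This basis is stable under finite intersection, since $(\ms X\setminus U)\cap(\ms X\setminus V)=\ms X\setminus(U\cup V)$ with $U\cup V\in\mathring{\mathcal K}(\ms X)$, and it contains $\ms X=\ms X\setminus\emptyset$. The space is \T$_0$ by (2), the opposite of the antisymmetric specialization order of a \T$_0$ space being again antisymmetric. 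For quasi-compactness I would pass to the patch topology: each $U\in\mathring{\mathcal K}(\ms X)$ is clopen in $\ms X^{\mbox{\tiny{\texttt{cons}}}}$, so each basic set $\ms X\setminus U$ is patch-closed in the compact Hausdorff space $\ms X^{\mbox{\tiny{\texttt{cons}}}}$, hence patch-compact; as the inverse topology is coarser than the constructible one (every inverse-open set is patch-open), each $\ms X\setminus U$, and in particular $\ms X$ itself, is quasi-compact in $\ms X^{\mbox{\tiny{\texttt{inv}}}}$.

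The delicate point, and the step I expect to be the main obstacle, is sobriety. Let $C$ be a nonempty inverse-irreducible inverse-closed set; being an intersection of quasi-compact open sets, $C$ is generization-closed ($C=C^\downarrow$) and patch-closed, hence patch-compact. Consider $\mathcal F:=\{U\in\mathring{\mathcal K}(\ms X)\mid C\not\subseteq U\}$. Translating inverse-irreducibility through the basic relatively open sets $C\setminus U$ shows $\mathcal F$ is stable under finite unions, so the patch-closed sets $\{C\setminus U\mid U\in\mathcal F\}$ enjoy the finite intersection property, since $\bigcap_i(C\setminus U_i)=C\setminus\bigcup_i U_i$ and $\bigcup_i U_i\in\mathcal F$. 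Patch-compactness of $C$ then produces a point $\eta\in C$ lying in no member of $\mathcal F$; equivalently, every quasi-compact open set containing $\eta$ contains $C$. Consequently $C\subseteq\eta^\downarrow=\Cl^{\mbox{\tiny{\texttt{inv}}}}(\{\eta\})$, while $\eta^\downarrow\subseteq C$ because $C$ is generization-closed and contains $\eta$; thus $C=\Cl^{\mbox{\tiny{\texttt{inv}}}}(\{\eta\})$, and $\eta$ is the required generic point, unique by \T$_0$. With the basis of quasi-compact opens stable under finite intersection, quasi-compactness, \T$_0$, and sobriety all in hand, Hochster's characterization yields that $\ms X^{\mbox{\tiny{\texttt{inv}}}}$ is spectral.
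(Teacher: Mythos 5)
Your proof is correct, and there is in fact nothing in the paper to compare it with line by line: the paper states this proposition as a quotation of Hochster's Proposition 8 and gives no proof, so what you have supplied is a self-contained verification of a result the survey only cites. Your argument is the standard one and it is sound throughout. In (1) the only axiom with content is preservation of finite unions, and your use of the fact that $\mathring{\mathcal K}(\ms X)$ is closed under finite unions settles it; idempotency via the observation that a set in $\mathring{\mathcal K}(\ms X)$ contains $Y$ iff it contains $\Cl^{\mbox{\tiny{\texttt{inv}}}}(Y)$ is exactly right. In (2) the passage from ``every quasi-compact open neighborhood of $x$ contains $y$'' to ``every open neighborhood of $x$ contains $y$'' correctly invokes the basis property of $\mathring{\mathcal K}(\ms X)$ in a spectral space. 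In (3) the route through the constructible topology is the right technical device: the basic sets $\ms X\setminus U$ are patch-clopen, hence compact in the compact Hausdorff space $\ms X^{\mbox{\tiny{\texttt{cons}}}}$, hence quasi-compact in the coarser inverse topology; and your sobriety argument -- extracting a point $\eta$ of an inverse-irreducible inverse-closed set $C$ by applying patch-compactness of $C$ to the family $\{C\setminus U \mid U\in\mathcal F\}$, whose finite intersection property is precisely a reformulation of irreducibility -- is correct, with the pleasant bonus that $\mathcal F$ is automatically nonempty (it contains $\emptyset$ once $C\neq\emptyset$), so the compactness step never degenerates. Two points are left tacit but are standard: first, that a basis of quasi-compact opens stable under finite intersection forces all quasi-compact opens to be stable under finite intersection (write each quasi-compact open as a finite union of basic sets and distribute); second, that antisymmetry of the specialization preorder is equivalent to T$_0$, which justifies deducing T$_0$ from (2). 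With those glosses Hochster's characterization applies verbatim, and your proof is in substance the same patch-topology argument found in the literature, so it is fully consistent with the citation the paper gives.
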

  
  \medskip
  
  Let $\ms X$ be a spectral space. For each subset $Y$ of $\ms X$, set:
 $$
\Max(\Cl^{\mbox{\tiny{\texttt{inv}}}}(Y)):= \{x \in \Cl^{\mbox{\tiny{\texttt{inv}}}}(Y) \mid x^\uparrow  \cap \Cl^{\mbox{\tiny{\texttt{inv}}}}(Y)  = \{x\}\} \,.$$

B. Olberding in \cite[Proposition 2.1(2)]{ol-13} has observed that:

$$
\Max(\Cl^{\mbox{\tiny{\texttt{inv}}}}(Y)) \subseteq \Cl^{\mbox{\tiny{\texttt{cons}}}}(Y)\,.$$

\medskip

  From the previous observation and from Lemmas \ref{L-R},  \ref{special} and \ref{min-max}   and from Proposition \ref{inv},  it is not very hard to prove the following (see also \cite[Section 3]{dofopa-80}, \cite[Remark 2.2 and Proposition 2.3]{sch-tr},  \cite[Proposition 2.3]{ol-13} and \cite[Proposition 2.6]{fifolo-13b}).

  \begin{cor} \label{R-inv} Let $\ms X$ be a spectral space. 
  \begin{enumerate}[\rm (1)]
  \item The constructible topology on $\ms X^{\mbox{\tiny{\texttt{inv}}}}$ 
  coincides with the constructible topology on $\ms X$, i.e., $(\ms X^{\mbox{\tiny{\texttt{inv}}}})^{\mbox{\tiny{\texttt{cons}}}} =\ms X^{\mbox{\tiny{\texttt{cons}}}} $.
  \item For each subset $Y$ of $\ms X$, 
  $$\Cl^{\mbox{\tiny{\texttt{inv}}}}(Y) =  \Cl^R( \Max(\Cl^{\mbox{\tiny{\texttt{inv}}}}(Y)))=
 \Cl^R(\Cl^{\mbox{\tiny{\texttt{cons}}}}(Y))\}\,.
 $$
 \item
 $(\ms X^{\mbox{\tiny{\texttt{inv}}}})^L = \ms X^R$ and $(\ms X^{\mbox{\tiny{\texttt{inv}}}})^R = \ms X^L$.
  \item $(\ms X^{\mbox{\tiny{\texttt{inv}}}})^{\mbox{\tiny{\texttt{inv}}}} = \ms X$.
 \item For each $x \in \ms X$, $\Cl^{\mbox{\tiny{\texttt{inv}}}}(x) = \Cl^R(x)$ is an irreducible closed set of $\ms X^{\mbox{\tiny{\texttt{inv}}}}$ (and, obviously,  $\Cl(x) = \Cl^L(x)$ is an irreducible closed set of $\ms X$).
  \item The topological space $\ms X$ is irreducible if and only if $\ms X^{\mbox{\tiny{\texttt{inv}}}}$ has a unique closed point.
  
  \item The topological space $\ms X$ has a unique closed point if and only if $\ms X^{\mbox{\tiny{\texttt{inv}}}}$ is irreducible.
  
  \item The following are equivalent.

\begin{enumerate}
\item[\emph{(i)}] $Y$ is quasi-compact in $\ms X$.
\item[\emph{(ii)}] $\Cl^R(Y)$ is quasi-compact in $\ms X$.
\item[\emph{(iii)}] $\Cl^{\mbox{\tiny{\texttt{inv}}}}(Y)= \Cl^R(Y)$. 
\item[\emph{(iv)}] $\Cl^{\mbox{\tiny{\texttt{cons}}}}(\Cl^R(Y)) = \Cl^R(Y) $. 
\item[\emph{(v)}]$\Max(\Cl^{\mbox{\tiny{\texttt{inv}}}}(Y)) \subseteq Y$.
  \end{enumerate}

\end{enumerate}
 \end{cor}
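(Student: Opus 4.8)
The plan is to derive all eight assertions from four ingredients already available: the order-reversal of Proposition \ref{inv}(2), the fact (Lemma \ref{L-R}) that the $L$- and $R$-topologies are computed purely from the underlying order, Lemma \ref{special}, and the recalled inclusion $\Max(\Cl^{\mbox{\tiny{\texttt{inv}}}}(Y))\subseteq\Cl^{\mbox{\tiny{\texttt{cons}}}}(Y)$. I would first dispatch the two structural facts (1) and (3), on which the rest rests. For (3): by Proposition \ref{inv}(2) the order induced by $\ms X^{\mbox{\tiny{\texttt{inv}}}}$ is the opposite of that of $\ms X$, and since $\Cl^L$ and $\Cl^R$ depend only on the order (Lemma \ref{L-R}(1)), the specialization closure of $\ms X^{\mbox{\tiny{\texttt{inv}}}}$ is the generization closure of $\ms X$; this gives $(\ms X^{\mbox{\tiny{\texttt{inv}}}})^L=\ms X^R$ and, symmetrically, $(\ms X^{\mbox{\tiny{\texttt{inv}}}})^R=\ms X^L$. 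For (1): the quasi-compact open sets of $\ms X$ form a basis of closed sets of $\ms X^{\mbox{\tiny{\texttt{inv}}}}$, so the quasi-compact opens of $\ms X^{\mbox{\tiny{\texttt{inv}}}}$ are, modulo Boolean operations, the complements of those of $\ms X$; hence $\mathring{\mathcal K}(\ms X^{\mbox{\tiny{\texttt{inv}}}})$ generates the same Boolean algebra $\mathcal K(\ms X)$, and the two constructible topologies coincide.

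I would then obtain (2), (4), (5) by transporting Lemma \ref{special} through (1) and (3). Applying Lemma \ref{special} to the spectral space $\ms X^{\mbox{\tiny{\texttt{inv}}}}$ expresses $\Cl^{\mbox{\tiny{\texttt{inv}}}}(Y)$ as the $L$-closure (for $\ms X^{\mbox{\tiny{\texttt{inv}}}}$) of the constructible closure (for $\ms X^{\mbox{\tiny{\texttt{inv}}}}$) of $Y$; rewriting the inner closure via (1) and the outer one via (3) yields the second equality $\Cl^{\mbox{\tiny{\texttt{inv}}}}(Y)=\Cl^R(\Cl^{\mbox{\tiny{\texttt{cons}}}}(Y))$ of (2). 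The first equality follows because, by the recalled inclusion, $M:=\Max(\Cl^{\mbox{\tiny{\texttt{inv}}}}(Y))\subseteq\Cl^{\mbox{\tiny{\texttt{cons}}}}(Y)$, while every point of the (inverse-)closed set $\Cl^{\mbox{\tiny{\texttt{inv}}}}(Y)$ specializes in $\ms X^{\mbox{\tiny{\texttt{inv}}}}$ to a point of $M$, so $\Cl^R(M)=\Cl^{\mbox{\tiny{\texttt{inv}}}}(Y)$. Statement (4) comes from applying (2) to $\ms X^{\mbox{\tiny{\texttt{inv}}}}$ and simplifying with (1) and (3) back to $\Cl^L(\Cl^{\mbox{\tiny{\texttt{cons}}}}(Y))=\Cl(Y)$ (Lemma \ref{special}). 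For (5), $\Cl^{\mbox{\tiny{\texttt{inv}}}}(\{x\})$ is the closure of a point and so is irreducible and closed in $\ms X^{\mbox{\tiny{\texttt{inv}}}}$; since $\{x\}$ is constructibly closed, (2) collapses to $\Cl^{\mbox{\tiny{\texttt{inv}}}}(\{x\})=\Cl^R(\{x\})$, and the parenthetical is the same assertion read in $\ms X$.

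For (6) and (7) I would pass to order language via Lemma \ref{min-max}: the closed points of $\ms X^{\mbox{\tiny{\texttt{inv}}}}$ are the maximal elements of its order, i.e. the minimal elements of $\ms X$. Since in a spectral (hence sober) space every point admits a generization to a minimal one, a unique minimal element is necessarily a minimum, that is, a generic point, and conversely; thus $\ms X$ is irreducible precisely when $\ms X^{\mbox{\tiny{\texttt{inv}}}}$ has a single closed point, which is (6). Statement (7) is then (6) applied to $\ms X^{\mbox{\tiny{\texttt{inv}}}}$ together with the involution $(\ms X^{\mbox{\tiny{\texttt{inv}}}})^{\mbox{\tiny{\texttt{inv}}}}=\ms X$ of (4).

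The substantial part is (8), which I would prove by the cycle (i)$\Rightarrow$(iii)$\Rightarrow$(ii)$\Rightarrow$(i), with (iii)$\Leftrightarrow$(iv) and (iii)$\Leftrightarrow$(v) handled separately. The equivalence (i)$\Leftrightarrow$(ii) is routine: open sets equal their $R$-closure (Lemma \ref{L-R}(3)), so any open cover of $Y$ already covers $\Cl^R(Y)$, and conversely. For (i)$\Rightarrow$(iii) I would check $\Cl^{\mbox{\tiny{\texttt{inv}}}}(Y)\subseteq\Cl^R(Y)$ (the reverse inclusion being automatic): if $z\notin\Cl^R(Y)$ then $Y\cap\Cl(\{z\})=\emptyset$, so for each $y\in Y$ a quasi-compact open contained in $\ms X\setminus\Cl(\{z\})$ separates $y$ from $z$, and quasi-compactness of $Y$ yields a single quasi-compact open containing $Y$ but not $z$, whence $z\notin\Cl^{\mbox{\tiny{\texttt{inv}}}}(Y)$. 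The key observation closing the cycle is that $\Cl^{\mbox{\tiny{\texttt{inv}}}}(Y)$ is \emph{always} quasi-compact in $\ms X$: being an intersection of quasi-compact opens it is inverse-closed, hence closed and so compact in the compact space $\ms X^{\mbox{\tiny{\texttt{cons}}}}$, and the continuous identity $\ms X^{\mbox{\tiny{\texttt{cons}}}}\to\ms X$ carries it to itself; thus under (iii), $\Cl^R(Y)=\Cl^{\mbox{\tiny{\texttt{inv}}}}(Y)$ is quasi-compact, giving (ii). Finally, (iv) says that $\Cl^R(Y)$ is constructibly closed, which is equivalent to (iii) since inverse-closed sets are constructibly closed and, conversely, applying (2) to the constructibly closed down-set $\Cl^R(Y)$ reproduces it; and (v) is equivalent to (iii) by a short maximal-element argument based on the first equality of (2). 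The main obstacle is exactly the quasi-compactness of $\Cl^{\mbox{\tiny{\texttt{inv}}}}(Y)$ in $\ms X$: recognizing that a detour through the compact constructible topology is what turns the abstract inverse-closure into a genuinely quasi-compact subspace.
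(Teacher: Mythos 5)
Your proposal is correct and takes essentially the same route the paper intends: the paper gives no detailed argument for this corollary, deriving it precisely from Proposition \ref{inv}, Lemmas \ref{L-R}, \ref{special} and \ref{min-max}, and Olberding's inclusion $\Max(\Cl^{\mbox{\tiny{\texttt{inv}}}}(Y)) \subseteq \Cl^{\mbox{\tiny{\texttt{cons}}}}(Y)$, which are exactly the ingredients you assemble (and your treatment of (8), via the quasi-compactness of $\Cl^{\mbox{\tiny{\texttt{inv}}}}(Y)$ obtained through the compact constructible topology, is the right fleshing-out). One wording slip worth fixing: in (2), every point of $\Cl^{\mbox{\tiny{\texttt{inv}}}}(Y)$ specializes \emph{in $\ms X$} (equivalently, generizes in $\ms X^{\mbox{\tiny{\texttt{inv}}}}$) to a point of $M := \Max(\Cl^{\mbox{\tiny{\texttt{inv}}}}(Y))$ --- taking specializations in $\ms X^{\mbox{\tiny{\texttt{inv}}}}$, as you wrote, would yield $\Cl^L(M)$ rather than $\Cl^R(M)$ --- and this existence of maximal specializations inside an inverse-closed set is the same Zorn-plus-constructible-compactness fact about spectral spaces that you correctly invoke again for (6).
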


\medskip
  By using the inverse topology, we can state an easy corollary of Proposition \ref{X=XL} and Corollary \ref{R-inv} (see also \cite[Theorem 3.3, Corollaries 3.4 and 3.5]{dofopa-80}),  which provides in part (1) further characterizations of when $\ms X^L = \ms X$.
  
  \begin{cor} Let $\ms X$ be a spectral space. 
  \begin{enumerate}
  \item[\emph{(1)}] 
  The following are equivalent.
  {
\begin{enumerate}
\item[\emph{(i)}]  $\ms X^L = \ms X$  (i.e., $\ms X$ is an Alexandroff-discrete topological space).

 \item[\emph{(ii)}] Each open subset of $\ms X^{\mbox{\tiny{\texttt{inv}}}}$ is the complement of a quasi-compact open subset
of $\ms X$.
 
 \item[\emph{(iii)}]  $\ms X^{\mbox{\tiny{\texttt{inv}}}}$ is a Noetherian space. 
 \end{enumerate}
 }

 \item[\emph{(2)}] 
  The following are equivalent.
  {
\begin{enumerate}
\item[\emph{(i)}]  $\ms X^R = \ms X^{\mbox{\tiny{\texttt{inv}}}}$ (i.e., $\ms X^{\mbox{\tiny{\texttt{inv}}}}$ is an Alexandroff-discrete topological space).
 \item[\emph{(ii)}]   Each open subset of $\ms X$ is the complement of a quasi-compact open subset
of $\ms X^{\mbox{\tiny{\texttt{inv}}}}$ (or, equivalently,    each open subset of $\ms X$ is quasi-compact).
 \item[\emph{(iii)}]   $\ms X$ is an Noetherian topological space.
 \end{enumerate}
 } 
 
 \item[\emph{(3)}] 
  The following are equivalent.
  {
\begin{enumerate}
\item[\emph{(i)}]  $\ms X$ is a Noetherian Alexandroff-discrete space.
\item[\emph{(ii)}]  $\ms X^{\mbox{\tiny{\texttt{inv}}}}$ is a Noetherian Alexandroff-discrete space.

 \item[\emph{(iii)}]   $\Card(\ms X)$ is finite.
 \end{enumerate}
 }  
 
 \end{enumerate}
 
  \end{cor}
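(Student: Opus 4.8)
The plan is to prove the three parts in order, deducing part (2) from part (1) by the self-duality of the inverse topology, and then obtaining part (3) as the ``conjunction'' of (1) and (2) together with a purely order-combinatorial finiteness argument.

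For part (1), I would first record that, by the closure formula of Proposition \ref{inv}, the closed subsets of $\ms X^{\mbox{\tiny{\texttt{inv}}}}$ are exactly the arbitrary intersections $\bigcap_\lambda U_\lambda$ of quasi-compact open subsets $U_\lambda$ of $\ms X$ (one inclusion is the definition, the other follows by applying $\Cl^{\mbox{\tiny{\texttt{inv}}}}$ to such an intersection). Hence condition (ii), that every open subset of $\ms X^{\mbox{\tiny{\texttt{inv}}}}$ is the complement of a \emph{single} quasi-compact open subset of $\ms X$, says precisely that every closed set $\bigcap_\lambda U_\lambda$ of $\ms X^{\mbox{\tiny{\texttt{inv}}}}$ is itself quasi-compact and open in $\ms X$; this is exactly condition (iii) of Proposition \ref{X=XL}, which is equivalent to $\ms X^L=\ms X$, giving (i) $\Leftrightarrow$ (ii). For (ii) $\Leftrightarrow$ (iii) I would use that a spectral space is Noetherian if and only if all of its open sets are quasi-compact: the basic open sets of $\ms X^{\mbox{\tiny{\texttt{inv}}}}$ are the complements $\ms X\setminus U$ of quasi-compact open sets $U$ of $\ms X$, and these are quasi-compact open in $\ms X^{\mbox{\tiny{\texttt{inv}}}}$ by Hochster's construction (Proposition \ref{inv}). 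Thus (ii) forces every open set of $\ms X^{\mbox{\tiny{\texttt{inv}}}}$ to be quasi-compact, i.e.\ (iii). Conversely, under (iii) a general open set $\bigcup_\lambda(\ms X\setminus U_\lambda)$ of $\ms X^{\mbox{\tiny{\texttt{inv}}}}$ is quasi-compact, hence equals a finite subunion $\ms X\setminus\bigcap_{i=1}^n U_{\lambda_i}$, and $\bigcap_{i=1}^n U_{\lambda_i}$ is quasi-compact open in $\ms X$ because finite intersections of quasi-compact opens are quasi-compact open in a spectral space; this yields (ii).

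Part (2) I would obtain by applying part (1) verbatim to the spectral space $\ms X^{\mbox{\tiny{\texttt{inv}}}}$. Using Corollary \ref{R-inv}, namely $(\ms X^{\mbox{\tiny{\texttt{inv}}}})^{\mbox{\tiny{\texttt{inv}}}}=\ms X$ and $(\ms X^{\mbox{\tiny{\texttt{inv}}}})^L=\ms X^R$, the three conditions of part (1) for $\ms X^{\mbox{\tiny{\texttt{inv}}}}$ translate respectively into $\ms X^R=\ms X^{\mbox{\tiny{\texttt{inv}}}}$, into ``every open subset of $\ms X$ is the complement of a quasi-compact open subset of $\ms X^{\mbox{\tiny{\texttt{inv}}}}$'', and into ``$\ms X$ is Noetherian''. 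The parenthetical equivalence inside (ii) is simply the Noetherian criterion ``all opens quasi-compact'' applied to $\ms X$ itself. Then, for part (3), the implication (iii) $\Rightarrow$ (i) is immediate (a finite \T$_0$ space is Alexandroff-discrete and has only finitely many open sets, hence is Noetherian), and (i) $\Leftrightarrow$ (ii) follows at once by combining parts (1) and (2): ``$\ms X$ Noetherian and Alexandroff-discrete'' means ``$\ms X^L=\ms X$ and $\ms X$ Noetherian'', which by part (1) and part (2) is equivalent to ``$\ms X^{\mbox{\tiny{\texttt{inv}}}}$ Noetherian and $\ms X^{\mbox{\tiny{\texttt{inv}}}}$ Alexandroff-discrete''.

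The substantial implication is (i) $\Rightarrow$ (iii), which is where I expect the main obstacle to lie: everything up to here is a formal consequence of the inverse-topology machinery, whereas showing that ``Noetherian $+$ Alexandroff-discrete'' forces finiteness is genuinely combinatorial. I would pass to the induced partial order $\leq$ on $\ms X$ and prove $\ms X$ finite by excluding infinite chains and infinite antichains. From $\ms X$ Noetherian (every descending chain of closed sets stabilizes, applied to $\Cl(x_0)\supsetneq\Cl(x_1)\supsetneq\cdots$) one gets that $\ms X$ has no infinite strictly ascending chain; applying this to $\ms X^{\mbox{\tiny{\texttt{inv}}}}$, which is Noetherian by (ii) and carries the opposite order, rules out infinite strictly descending chains as well. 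To exclude an infinite antichain $\{a_n\}$ I would use that, since $\ms X^L=\ms X$, each $a_n^\downarrow$ is open (Lemma \ref{L-R}(3)), so the sets $a_1^\downarrow\cup\dots\cup a_n^\downarrow$ form a strictly ascending chain of open subsets of $\ms X$, contradicting the Noetherian property. Finally, the Ramsey-type dichotomy that an infinite poset must contain an infinite chain or an infinite antichain, together with the fact that an infinite chain contains an infinite ascending or descending subchain, shows that all three exclusions cannot simultaneously hold on an infinite set; hence $\ms X$ is finite. The crux is precisely this use of Alexandroff-discreteness to convert a hypothetical infinite antichain into an infinite ascending chain of quasi-compact open sets, which is what marries the two finiteness inputs coming from $\ms X$ and from $\ms X^{\mbox{\tiny{\texttt{inv}}}}$.
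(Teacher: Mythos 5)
Your proposal is correct. Note first that the paper offers no written proof of this corollary at all: it is presented as an ``easy corollary'' of Proposition \ref{X=XL} and Corollary \ref{R-inv}, with the details deferred to Dobbs--Fontana--Papick. Your treatment of parts (1) and (2) is exactly that intended route: identifying the closed sets of $\ms X^{\mbox{\tiny{\texttt{inv}}}}$ with arbitrary intersections of quasi-compact open subsets of $\ms X$, reducing (1)(ii) to condition (iii) of Proposition \ref{X=XL}, and obtaining (2) by applying (1) to $\ms X^{\mbox{\tiny{\texttt{inv}}}}$ via $(\ms X^{\mbox{\tiny{\texttt{inv}}}})^{\mbox{\tiny{\texttt{inv}}}}=\ms X$ and $(\ms X^{\mbox{\tiny{\texttt{inv}}}})^L=\ms X^R$ from Corollary \ref{R-inv}. (One small citation point: the fact that the quasi-compact open subsets of $\ms X^{\mbox{\tiny{\texttt{inv}}}}$ are precisely the complements of quasi-compact open subsets of $\ms X$ is slightly more than what Proposition \ref{inv} literally states, but it is indeed part of Hochster's Proposition 8, so the appeal is legitimate.) Where you genuinely add content is the implication (3)(i)$\Rightarrow$(iii), which the paper leaves entirely to the literature. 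Your argument there is sound: Noetherianity of $\ms X$ excludes infinite ascending chains, Noetherianity of $\ms X^{\mbox{\tiny{\texttt{inv}}}}$ (available through your part (1)) excludes infinite descending chains, Alexandroff-discreteness converts an infinite antichain into a strictly ascending chain of open sets, and the Ramsey dichotomy finishes. It is worth knowing that one can avoid infinite Ramsey theory altogether: under (3)(i) every open subset of $\ms X$ is quasi-compact, so every closed subset of $\ms X$ is clopen in $\ms X^{\mbox{\tiny{\texttt{cons}}}}$, and each $x^\downarrow$ is open in $\ms X$ (Lemma \ref{L-R}(3)) hence also clopen in $\ms X^{\mbox{\tiny{\texttt{cons}}}}$; therefore $\{x\}=\Cl(\{x\})\cap x^\downarrow$ is open in $\ms X^{\mbox{\tiny{\texttt{cons}}}}$ for every $x$, so $\ms X^{\mbox{\tiny{\texttt{cons}}}}$ is discrete, and its compactness forces $\ms X$ to be finite. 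Both arguments are valid; yours is purely order-combinatorial, while the alternative trades the Ramsey-type dichotomy for compactness of the constructible topology, which the paper has already made available.
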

  
\bigskip

 Recall that  \emph{a spectral map} of spectral spaces $f: \ms X\rightarrow \ms Y$ is  a continuous map such that the preimage of every open and quasi-compact subset of $\ms Y$ under $f$ is again quasi-compact. We say that a spectral map of spectral spaces $f: \ms X\rightarrow \ms Y$
  is \textit{a going-down map} (respectively, \textit{a going-up map})  if, for any  pair of distinct elements $y', y \in \ms Y$
  such that $y'\in \{y\}^{\downarrow}$  (respectively, $y'\in \{y\}^{\uparrow}$
  and for any $x\in \ms X$ such that $f(x)=y$ there exists a point $x'\in \{x\}^{\downarrow}$ (respectively, $x'\in \{x\}^{\uparrow}$) such that $f(x')=y'$. 

\begin{lem} Let  $f: \ms X\rightarrow \ms Y$ be a spectral map of spectral spaces.
\begin{enumerate}
\item[\emph{(1)}] $f: \ms X^{\mbox{\tiny{\texttt{cons}}}}\rightarrow \ms Y^{\mbox{\tiny{\texttt{cons}}}}$ is a closed spectral map.
\item[\emph{(2)}] The following are equivalent
{\begin{enumerate}
\item[\emph{(i)}] $f$ is a going-down (respectively, going-up) map.

\item[\emph{(ii)}] $f(x^\downarrow) = f(x)^\downarrow$ (respectively, $f(x^\uparrow) = f(x)^\uparrow$), for each $x \in \ms X$

\item[\emph{(iii)}] $f({\ms X'}^\downarrow) = f({\ms X'})^\downarrow$ (respectively, $f({\ms X'}^\uparrow) = f({\ms X'})^\uparrow$,) for each $\ms X' \subseteq \msX$.

\item[\emph{(iv)}] The continuous map $f: \ms X^{R}\rightarrow \ms Y^{R}$ (respectively, $f: \ms X^{L}\rightarrow \ms Y^{L}$) is closed.
\end{enumerate}
}
\item[\emph{(3)}] $f: \ms X^{R}\rightarrow \ms Y^{R}$  is  closed  (respectively, open) if and only if $f: \ms X^{L}\rightarrow \ms Y^{L}$ is open (respectively, closed).
\item[\emph{(4)}]  If $f: \ms X\rightarrow \ms Y$ is an open (respectively, closed) spectral map of spectral spaces then $f$ is a going-down (respectively, going-up) map.
\item[\emph{(5)}]  If $f: \ms X\rightarrow \ms Y$ is an open  spectral map of spectral spaces then $f: \ms X^{\mbox{\tiny{\texttt{inv}}}}\rightarrow \ms Y^{\mbox{\tiny{\texttt{inv}}}}$
 is a closed  spectral map.
\end{enumerate}
\end{lem}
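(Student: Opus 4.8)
The plan is to dispatch the five statements in turn, using freely that continuity makes $f$ order-preserving (so $f(x^{\downarrow})\subseteq f(x)^{\downarrow}$ and $f(x^{\uparrow})\subseteq f(x)^{\uparrow}$ always hold, and that $f$ induces continuous maps $\ms X^{R}\to\ms Y^{R}$ and $\ms X^{L}\to\ms Y^{L}$, since preimages of down-sets are down-sets and preimages of up-sets are up-sets). For (1) I would first note that $f$ is continuous for the constructible topologies: as $f$ is spectral, the preimage of a quasi-compact open $U\subseteq\ms Y$ is quasi-compact and open, hence clopen in $\ms X^{\mbox{\tiny{\texttt{cons}}}}$, and such sets together with their complements form a subbasis of $\ms Y^{\mbox{\tiny{\texttt{cons}}}}$. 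Closedness is then automatic, because $\ms X^{\mbox{\tiny{\texttt{cons}}}}$ is compact and $\ms Y^{\mbox{\tiny{\texttt{cons}}}}$ is Hausdorff; and spectrality in the constructible topology is clear, since there the quasi-compact open sets are exactly the clopen sets, whose preimages are clopen.

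For (2) I would argue the going-down version, the going-up one being symmetric. The going-down property says precisely that $f(x)^{\downarrow}\subseteq f(x^{\downarrow})$ for all $x$, which with the automatic reverse inclusion yields (ii); then (ii)$\Leftrightarrow$(iii) follows by writing ${\ms X'}^{\downarrow}=\bigcup_{x\in\ms X'}x^{\downarrow}$ and distributing $f$; and (iii)$\Leftrightarrow$(iv) because the closed sets of $\ms X^{R}$ are exactly the down-sets, so ``$f$ carries down-sets to down-sets'' is literally the closedness of $f:\ms X^{R}\to\ms Y^{R}$ (one direction uses $C=C^{\downarrow}$, the other a short sandwich argument giving $f({\ms X'}^{\downarrow})=f(\ms X')^{\downarrow}$). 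Statement (3) is then a tautology: by Lemma~\ref{L-R}(3) the $R$-open and $L$-closed subsets are both the up-sets, while the $R$-closed and $L$-open subsets are both the down-sets, so ``$f:\ms X^{R}\to\ms Y^{R}$ closed'' and ``$f:\ms X^{L}\to\ms Y^{L}$ open'' are the same condition on the underlying set map, and dually for the other pair.

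The content is in (4) and (5). For (4) the closed case is immediate: $x^{\uparrow}=\Cl(\{x\})$ is closed, so $f(x^{\uparrow})$ is a closed set containing $f(x)$ and hence contains $\Cl(\{f(x)\})=f(x)^{\uparrow}$; thus any specialization $y'$ of $f(x)$ equals $f(x')$ for some $x'\in x^{\uparrow}$, which is going-up. The open case is the step I expect to be the main obstacle, since $x^{\downarrow}$ need not be open. Given $f(x)=y$ and a generization $y'$ of $y$, openness shows every open neighbourhood $U$ of $x$ meets $f^{-1}(y')$ (because $f(U)$ is open and contains $y$, hence $y'$), so $x\in\Cl(f^{-1}(y'))$. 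To upgrade this to a genuine generization I would invoke (1): $\ms Y^{\mbox{\tiny{\texttt{cons}}}}$ is Hausdorff, so $\{y'\}$ is constructibly closed and therefore $f^{-1}(y')$ is constructibly closed in $\ms X$; Lemma~\ref{special} then gives $\Cl(f^{-1}(y'))=\Cl^{L}(\Cl^{\mbox{\tiny{\texttt{cons}}}}(f^{-1}(y')))=(f^{-1}(y'))^{\uparrow}$, so $x$ specializes some $x'\in f^{-1}(y')$, i.e. $x'\in x^{\downarrow}$ with $f(x')=y'$, establishing going-down.

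Finally, (5) assembles the rest. First $f:\ms X^{\mbox{\tiny{\texttt{inv}}}}\to\ms Y^{\mbox{\tiny{\texttt{inv}}}}$ is spectral: it is continuous because the preimage of a subbasic inverse-open set $\ms Y\setminus U$ (with $U$ quasi-compact open) is $\ms X\setminus f^{-1}(U)$, again inverse-open as $f$ is spectral, and since $(\ms X^{\mbox{\tiny{\texttt{inv}}}})^{\mbox{\tiny{\texttt{cons}}}}=\ms X^{\mbox{\tiny{\texttt{cons}}}}$ by Corollary~\ref{R-inv}(1) its constructible incarnation is just the continuous map of (1). For closedness, let $C$ be closed in $\ms X^{\mbox{\tiny{\texttt{inv}}}}$; by Corollary~\ref{R-inv}(2), $\Cl^{\mbox{\tiny{\texttt{inv}}}}=\Cl^{R}\circ\Cl^{\mbox{\tiny{\texttt{cons}}}}$, so $C$ is both constructibly closed and a down-set. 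Then $f(C)$ is constructibly closed by (1), and since $f$ is going-down by (4), the equivalence (i)$\Leftrightarrow$(iii) of (2) gives $f(C)=f(C^{\downarrow})=f(C)^{\downarrow}$, a down-set. Hence $\Cl^{\mbox{\tiny{\texttt{inv}}}}(f(C))=\Cl^{R}(\Cl^{\mbox{\tiny{\texttt{cons}}}}(f(C)))=f(C)$, so $f(C)$ is inverse-closed and $f:\ms X^{\mbox{\tiny{\texttt{inv}}}}\to\ms Y^{\mbox{\tiny{\texttt{inv}}}}$ is a closed spectral map.
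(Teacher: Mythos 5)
Your proof is correct, and in the one part of the lemma that carries real content --- the ``open implies going-down'' half of (4) --- you take a genuinely different route from the paper. The paper argues by hand with compactness: it first observes that $x^\downarrow=\bigcap\{U \mid U \mbox{ quasi-compact open},\ x\in U\}$, then, given $y\in\bigcap\{f(U)\}$, it forms the family $\{f^{-1}(\{y\})\cap U\}$ over quasi-compact open neighborhoods $U$ of $x$, checks the finite intersection property, and uses that the fiber $f^{-1}(\{y\})$ is compact in $\ms X^{\mbox{\tiny{\texttt{cons}}}}$ to produce a point $x'\in f^{-1}(\{y\})\cap x^\downarrow$; this also yields the formula $f(x^\downarrow)=\bigcap\{f(U)\}$ as a by-product. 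You instead show directly that $x\in\Cl(f^{-1}(y'))$ (openness of $f$ plus the fact that open sets are closed under generization), note that $f^{-1}(y')$ is constructibly closed because points of $\ms Y^{\mbox{\tiny{\texttt{cons}}}}$ are closed and $f$ is constructibly continuous by (1), and then invoke Lemma~\ref{special} to identify $\Cl(f^{-1}(y'))$ with $(f^{-1}(y'))^{\uparrow}$. The compactness that the paper spells out is exactly what is packaged inside Lemma~\ref{special}, so your argument is shorter and reuses a result already available in the paper, at the price of being less self-contained; both proofs hinge on the same underlying fact that fibers are constructibly closed/compact. Your treatment of (1)--(3), of the closed/going-up half of (4), and of (5) matches the paper's (the paper proves (5) via the same identity $\Cl^{\mbox{\tiny{\texttt{inv}}}}=\Cl^{R}\circ\Cl^{\mbox{\tiny{\texttt{cons}}}}$ combined with (1) and (2)/(4)); one small bonus of your write-up is that in (5) you explicitly verify that $f:\ms X^{\mbox{\tiny{\texttt{inv}}}}\rightarrow\ms Y^{\mbox{\tiny{\texttt{inv}}}}$ is spectral, a point the paper leaves implicit, though your appeal to ``continuous plus constructibly continuous implies spectral'' is a standard fact not stated in the paper, and could be replaced by the direct observation that the quasi-compact opens of $\ms Y^{\mbox{\tiny{\texttt{inv}}}}$ are the complements of quasi-compact opens of $\ms Y$.
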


\begin{proof} (1) is an obvious consequence of the definitions. (2)  Since a spectral map $f$ is continuous then it is straightforward that $x' \leq x$ in $\ms X$ implies that $f(x') \leq f(x)$ and so 
  $ f(x^{\downarrow})\subseteq f(x)^{\downarrow} $. 
  Moreover, for each $y \in \ms Y$, $f^{-1}(y^\downarrow) = \bigcup \{ x^\downarrow \mid x \in \ms X  \mbox{ and } f(x) \leq y \}$
   and so $f: \ms X^R \rightarrow \ms Y^R$ is also continuous.
   The various equivalences are now straightforward consequences of the definitions.
   
   (3) is an easy consequence of (2).
   
  (4)  Let $x \in \ms X$. It is easy to see that  $x^{\downarrow} =  \bigcap\{U \mid  U $  open and quasi-compact and  $ x \in U \subseteq \ms X\}$. Therefore, for any spectral map of spectral spaces $f: \ms X\rightarrow \ms Y$ and any $x\in \ms X$, the following holds:
$$
\begin{array}{rl}
f(x^{\downarrow}) & \hskip -2pt = f( \bigcap\{U \mid  U \mbox{ open and quasi-compact and } x \in U \subseteq \ms X\} ) \\  & \hskip -2pt \subseteq \bigcap\{f(U) \mid  U \mbox{ open and quasi-compact and } x \in U \subseteq \ms X \}\,.
\end{array}
$$
Conversely, assume that $f$ is an   open spectral map and take a point $y\in f(U)$, for any open and quasi-compact neighborhood $U$ of $x \in \ms X$. Consider the following collection of subsets of $\ms X$:
$$
\mathcal F:= \mathcal F(y) := \{f^{-1}(\{y\})\cap U \mid U \mbox{ open and quasi-compact and } x \in U \subseteq \ms X\}\,.
$$
Note now that $\mathcal F$ is obviously closed under finite intersections, since the quasi-compact open sets of $\ms X$ are closed under finite intersections and, by assumption, each set belonging to $\mathcal F$ is nonempty. On the other hand, the set $f^{-1}(\{y\})$ is closed  with respect to the constructible topology  on $\ms X$ and thus  is compact in $\ms X^{\mbox{\tiny{\texttt{cons}}}}$. 
Keeping in mind that each open and quasi-compact subspace of the given spectral topology on $\ms X$ is clopen in $\ms X^{\mbox{\tiny{\texttt{cons}}}}$, it follows immediately that $\mathcal F$ is a collection of closed subsets of the compact space $f^{-1}(\{y\}) \ (\subseteq \ms X^{\mbox{\tiny{\texttt{cons}}}})$, satisfying the finite intersection property.  Therefore, by compactness, there exists a point $x'\in f^{-1}(\{y\}) \cap U$, for any open and quasi-compact neighborhood $U$ of $x \in \ms X$. In particular,  $x' \in \bigcap \{U \mid U \mbox{ open and quasi-compact and } x \in U \subseteq \ms X\}= \{x\}^{\downarrow}$ and so $x' \leq x$. Therefore, $f(x') =y \leq f(x)$. We conclude that $ f(x^{\downarrow}) = \bigcap\{f(U) \mid  U \mbox{ open and quasi-compact and } x \in U \subseteq \ms X \}$.

On the other hand, since $f$ is open, we have:
$$
\begin{array}{rl}
f(x)^{\downarrow} & \hskip -2pt = \bigcap\{V \mid V \mbox{ open and quasi-compact and } f(x) \in V \subseteq \ms Y\}  \\  &\hskip -2pt  \subseteq \bigcap\{f(U) \mid  U \mbox{ open and quasi-compact and } x \in U \subseteq \ms X \} \\
&\hskip -2pt  = f(x^{\downarrow})\,.
\end{array}
$$
Since the opposite inclusion holds in general, we have  $f(x^\downarrow) = f(x)^\downarrow$ and so $f$ is a going-down spectral map.
 
The parenthetical statement is easier to prove. Indeed, suppose that $f$ is a closed spectral map,
 let  $y', y \in \ms Y$ be
  such that  $y'\in \{y\}^{\uparrow}$
  and let $x\in \ms X$ be such that $f(x)=y$.
By assumption, we have $f(\Cl(\{x\}))=\Cl(f(\{x\}))=\Cl(\{y\})$ and thus, since $y'\in \Cl(\{y\})$, there is a point $x'\in \Cl(\{x \})$ such that $f(x')=y'$. This shows that $f$ is a going-up map.

(5) If $f$ is an open spectral map then, by (3), $f$ is  going-down and thus, by (2), $f: \ms X^{R}\rightarrow \ms Y^{R}$ is  closed. Therefore, by using (1), for each $\ms X' \subseteq \ms X$, we have
$
\Cl^{\mbox{\tiny{\texttt{inv}}}}(\ms X') = 
\Cl^{R}(\Cl^{\mbox{\tiny{\texttt{cons}}}}(\ms X')) 
$ and so $f(\Cl^{\mbox{\tiny{\texttt{inv}}}}(\ms X')) = 
\Cl^{R}(f(\Cl^{\mbox{\tiny{\texttt{cons}}}}(\ms X'))) = 
\Cl^{R}(\Cl^{\mbox{\tiny{\texttt{cons}}}}(f(\ms X')))= \Cl^{\mbox{\tiny{\texttt{inv}}}}(f(\ms X'))$. 
\end{proof}

\begin{ex}
We now show that it is not true that
  if $f: \ms X\rightarrow \ms Y$ is a closed  spectral map of spectral spaces 
  then 
   $f: \ms X^{\mbox{\tiny{\texttt{inv}}}}\rightarrow \ms Y^{\mbox{\tiny{\texttt{inv}}}}$
 is an open  spectral map.
  As a matter of fact, let $K$ be a field and let $\boldsymbol{\mathcal T}:=\{T_i \mid i\in \mathbb N\}$ be an infinite and  countable collection of indeterminates over $K$. Let  $A:=K[\boldsymbol{\mathcal T}]$, let $M$ be the maximal ideal of $A$ generated by all the  indeterminates and let $B:= A/M$. Set $\ms X:=\Spec(B)$ and $\ms Y:=\Spec(A)$. Of course, the inclusion $f:\ms X\rightarrow \ms Y$  (associated to the canonical projection $A \rightarrow B$) is a closed embedding, with respect to the Zariski topology.
   We claim that $f$ is not open, if $\ms X$ and $\ms Y$ are endowed with the inverse topology. By contradiction, assume that $f: \ms X^{\mbox{\tiny{\texttt{inv}}}} \rightarrow \ms Y^{\mbox{\tiny{\texttt{inv}}}}$ is open.  In this situation, $\ms X$  should be open in $\ms Y^{\mbox{\tiny{\texttt{inv}}}}$, (since $\ms X$ is trivially open in $ \ms X^{\mbox{\tiny{\texttt{inv}}}}$).
    This implies that $\ms Z:=\ms Y\setminus f(\ms X) =\Spec(A)\setminus\{M\}$ is closed in $\ms Y^{\mbox{\tiny{\texttt{inv}}}}$, i.e., $\ms Z$ is an intersection of a family of open and quasi-compact subspaces of $\ms Y$. Since $\ms Z$ differs from $\ms Y$ for exactly one point, it has to be quasi-compact, with respect to the Zariski topology of $\ms Y$. On the other hand, it is immediately verified that the open cover 
$$\{\{P \in \ms Y\mid T_i\notin P\} \mid i \in \mathbb N\}$$
 of $\ms Z$ has no finite subcovers, a contradiction.  
\end{ex}

\section{Some applications}
The first application that we give is a topological interpretation of when two given collections of valuation domains are representations of the same integral domain.

\begin{prop}\label{closure-intersection}
\emph{(Finocchiaro-Fontana-Loper  \cite[Proposition 4.1]{fifolo-13b})}
Let $K$ be a field. If $Y_1,Y_2$ are nonempty subsets of \ $\zar(K)$ having  the same closure in  $\zar(K)$, with respect to the ultrafilter topology, then 
$$
\bigcap\{V\mid V\in Y_1\}=\bigcap \{V\mid V\in Y_2\}.
$$
In particular, 
$$
\bigcap\{V\mid V\in Y\}=\bigcap\{V\mid V\in \ad^{\mbox{\tiny{\texttt{ultra}}}}(Y)\}.
$$
\end{prop}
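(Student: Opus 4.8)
The plan is to establish the ``in particular'' identity first, because the general statement is an immediate formal consequence of it. Indeed, suppose $Y_1$ and $Y_2$ have the same ultrafilter closure, say $Z := \Cl^{\mbox{\tiny{\texttt{ultra}}}}(Y_1) = \Cl^{\mbox{\tiny{\texttt{ultra}}}}(Y_2)$. Applying the identity $\bigcap\{V \mid V \in Y\} = \bigcap\{V \mid V \in \Cl^{\mbox{\tiny{\texttt{ultra}}}}(Y)\}$ separately to $Y = Y_1$ and to $Y = Y_2$ yields
$$
\bigcap\{V \mid V \in Y_1\} = \bigcap\{V \mid V \in Z\} = \bigcap\{V \mid V \in Y_2\},
$$
which is exactly the desired equality. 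So all the content is concentrated in the second displayed identity, and I would prove it by a double inclusion.

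One inclusion is essentially free. Since every $V \in Y$ is the ultrafilter limit point $A_{\ms U}$ of the trivial ultrafilter $\ms U := \{S \subseteq Y \mid V \in S\}$ that it generates, we have $Y \subseteq \Cl^{\mbox{\tiny{\texttt{ultra}}}}(Y)$; and since intersecting a family of valuation overrings over a larger index set can only produce a smaller ring, it follows at once that $\bigcap\{V \mid V \in \Cl^{\mbox{\tiny{\texttt{ultra}}}}(Y)\} \subseteq \bigcap\{V \mid V \in Y\}$.

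For the reverse inclusion I would fix an element $x \in \bigcap\{V \mid V \in Y\}$ together with an arbitrary ultrafilter $\ms U$ on $Y$, and check directly that $x \in A_{\ms U}$. Recall that $A_{\ms U} = \{x \in K \mid B_x \cap Y \in \ms U\}$, where $B_x = \{V \in \zar(K) \mid x \in V\}$. The key observation is simply that, since $x$ lies in every valuation domain belonging to $Y$, one has $B_x \cap Y = Y$; and the whole set $Y$ is a member of every ultrafilter $\ms U$ on $Y$. Hence $B_x \cap Y = Y \in \ms U$, so $x \in A_{\ms U}$. As $\ms U$ was arbitrary, $x$ lies in $A_{\ms U}$ for every ultrafilter on $Y$, that is, $x \in \bigcap\{V \mid V \in \Cl^{\mbox{\tiny{\texttt{ultra}}}}(Y)\}$, completing the argument.

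There is no genuine obstacle in this proof: it is a direct unravelling of the definition of the ultrafilter limit point $A_{\ms U}$. The only two points demanding a moment's attention are the (order-reversing) behaviour of intersection under enlarging the family of overrings, and the trivial but decisive fact that $Y$ itself is a member of every ultrafilter on $Y$ --- this is precisely what forces any element common to all members of $Y$ into each limit point $A_{\ms U}$, and hence into the intersection taken over the full ultrafilter closure.
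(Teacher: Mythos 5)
Your proof is correct, and it is essentially the argument of the cited source (the survey itself states this result without proof, quoting \cite[Proposition 4.1]{fifolo-13b}): one reduces to the identity $\bigcap\{V\mid V\in Y\}=\bigcap\{V\mid V\in \Cl^{\mbox{\tiny{\texttt{ultra}}}}(Y)\}$, gets one inclusion from $Y\subseteq \Cl^{\mbox{\tiny{\texttt{ultra}}}}(Y)$, and gets the other by noting that any $x$ lying in every member of $Y$ satisfies $B_x\cap Y=Y\in\ms U$ for every ultrafilter $\ms U$ on $Y$, hence $x\in A_{\ms U}$ for every point $A_{\ms U}$ of the closure. Both the logical reduction and the definitional unravelling are sound, so there is nothing to fix.
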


The converse of the   first statement in Proposition \ref{closure-intersection} is false (for an explicit example see Example 4.4 in \cite{fifolo-13b}). More precisely, we will show that equality of the closures of the subsets $Y_1,Y_2$, with respect to the ultrafilter topology,  implies a statement that, in general, is stronger than the equality of the (integrally closed) domains obtained by intersections. To see this, recall some background material about semistar operations. 

Let $A$ be an integral domain, and let $K$ be the quotient field of $A$. As usual, denote by $\overline{\boldsymbol F}(A)$ the set of all nonzero $A-$submodules of $K$, and by ${\boldsymbol f}(A)$ the set of all nonzero finitely generated $A-$submodules of $K$.  As  is well known, a nonempty subset $Y$ of $\zar(K|A)$ induces the \emph{valuative semistar operation} $\wedge_Y$, defined by 
$F^{\wedge_Y}:=\bigcap \{FV\mid V\in Y\}$, for each $F\in\overline{\boldsymbol F}(A)$.  A valuative semistar operation $\star$ is always \texttt{e.a.b.}, that is, for all $F,G,H\in {\boldsymbol f}(A)$, $(FG)^\star\subseteq (FH)^\star$ implies $G^\star\subseteq H^\star$  (for more details, see for example \cite{folo-07}). Recall that we can associate to  any semistar operation $\star$ on $A$ a semistar operation $\star_f$ \emph{of finite type} (on $A$), by setting $F^{\star_f}:=\bigcup\{G^\star\mid G\in {\boldsymbol f}(A), \ G\subseteq F\}$, for each $F\in\overline{\boldsymbol F}(A)$; $\star_f$ is called \emph{the semistar operation of finite type associated to $\star$}. 

Since, for each $V\in  \zar(K|A)$ (equipped with the classical Zariski topology), $\Cl(\{V\}) = \{ W \in \zar(K|A) \mid W\subseteq V \}$  \cite[Ch. VI, Theorem 38]{zasa}, the partial order associated to the Zariski topology of $\zar(K|A)$ is defined as follows:
$$
W  \preceq V \; :\Leftrightarrow \; V\subseteq W\,.
$$
 For any subset $Y\subseteq \zar(K|A)$, denote by $Y^\downharpoonright$ \emph{the Zariski--generic closure of $Y$}, that is,  $Y^\downharpoonright:=\{W\in \zar(K|A)\mid V\subseteq W, \mbox{ for some } V\in Y\} = \ad^{R}(Y)$. 
 It is obvious that $\wedge_Y = \wedge_{Y^{\downharpoonright} }
 $.  From Proposition \ref{closure-intersection} we also have $\wedge_Y = \wedge_{\scriptsize\Cl^{\mbox{\tiny{\texttt{ultra}}}}(Y)} $.
 
\begin{thm}\label{semistar-closure}
 \emph{(Finocchiaro-Fontana-Loper  \cite[Theorem 4.9]{fifolo-13b})}
Let $A$ be an integral domain, $K$  its quotient field, and $Y_1,Y_2$ two nonempty subsets of $\zar(K|A)$. Then, the following conditions are equivalent. 
\begin{enumerate}[\rm(i)]
\item The semistar operations of finite type associated to $\wedge_{Y_1}$ and $\wedge_{Y_2}$ are the same, that is, $(\wedge_{Y_1})_f=(\wedge_{Y_2})_f$. 
\item The subsets $\ad^{\mbox{\tiny{\texttt{ultra}}}}(Y_1),\, \ad^{\mbox{\tiny{\texttt{ultra}}}}(Y_2)$ of \ $\zar(K|A)$ have the same Zariski--generic closure, that is,  $\ad^{\mbox{\tiny{\texttt{ultra}}}}(Y_1)^
\downharpoonright=\ad^{\mbox{\tiny{\texttt{ultra}}}}
(Y_2)^\downharpoonright$. 
\end{enumerate}
\end{thm}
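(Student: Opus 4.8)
The plan is to introduce, for a semistar operation $\star$ on $A$, the set of its \emph{$\star$-valuation overrings}
\[
\mathcal{W}(\star):=\{W\in\zar(K|A)\mid F^\star\subseteq FW \text{ for all } F\in{\boldsymbol f}(A)\},
\]
and to reduce the whole statement to the single identity
\[
\mathcal{W}(\wedge_Y)=\Cl^{\mbox{\tiny{\texttt{ultra}}}}(Y)^\downharpoonright,
\]
valid for every nonempty $Y\subseteq\zar(K|A)$. The point of passing through $\mathcal{W}$ is that, for $F\in{\boldsymbol f}(A)$, one has $F^\star=F^{\star_f}$ (take $G=F$ in the definition of $\star_f$), so $\mathcal{W}(\star)$ depends only on $\star_f$; hence $(\wedge_{Y_1})_f=(\wedge_{Y_2})_f$ forces $\mathcal{W}(\wedge_{Y_1})=\mathcal{W}(\wedge_{Y_2})$, and the identity above turns this into $\Cl^{\mbox{\tiny{\texttt{ultra}}}}(Y_1)^\downharpoonright=\Cl^{\mbox{\tiny{\texttt{ultra}}}}(Y_2)^\downharpoonright$, i.e. (i)~$\Rightarrow$~(ii).

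For (ii)~$\Rightarrow$~(i) I would not even need $\mathcal{W}$: combining $\wedge_Y=\wedge_{Y^\downharpoonright}$ (clear, since $V\subseteq W$ gives $FV\subseteq FW$) with $\wedge_Y=\wedge_{\Cl^{\mbox{\tiny{\texttt{ultra}}}}(Y)}$ (recorded just before the theorem, via Proposition \ref{closure-intersection}) yields $\wedge_Y=\wedge_{\Cl^{\mbox{\tiny{\texttt{ultra}}}}(Y)^\downharpoonright}$. Thus equality of the Zariski--generic closures of the ultrafilter closures gives $\wedge_{Y_1}=\wedge_{Y_2}$ outright, a fortiori $(\wedge_{Y_1})_f=(\wedge_{Y_2})_f$.

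It remains to prove the identity, and only the inclusion $\mathcal{W}(\wedge_Y)\subseteq\Cl^{\mbox{\tiny{\texttt{ultra}}}}(Y)^\downharpoonright$ is substantial. The reverse inclusion is immediate: if $V\in\Cl^{\mbox{\tiny{\texttt{ultra}}}}(Y)$ and $V\subseteq W$, then for every $F\in{\boldsymbol f}(A)$ we have $F^{\wedge_Y}=\bigcap_{V'\in\Cl^{\mbox{\tiny{\texttt{ultra}}}}(Y)}FV'\subseteq FV\subseteq FW$, so $W\in\mathcal{W}(\wedge_Y)$. For the hard inclusion, fix $W\in\mathcal{W}(\wedge_Y)$. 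I would build an ultrafilter $\ms U$ on $Y$ with $A_{\ms U}\subseteq W$; then $A_{\ms U}\in\Cl^{\mbox{\tiny{\texttt{ultra}}}}(Y)$ and $W\in\Cl^{\mbox{\tiny{\texttt{ultra}}}}(Y)^\downharpoonright$. Since $A_{\ms U}=\{x\in K\mid B_x\cap Y\in\ms U\}$, the requirement $A_{\ms U}\subseteq W$ amounts to asking that $\{V\in Y\mid x\notin V\}\in\ms U$ for every $x\in K\setminus W$, and such an $\ms U$ exists precisely when the family $\{\{V\in Y\mid x\notin V\}\mid x\in K\setminus W\}$ has the finite intersection property.

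The heart of the argument --- and the step I expect to be the main obstacle --- is verifying this finite intersection property from the defining inequality of $\mathcal{W}(\wedge_Y)$. Given $x_1,\dots,x_n\in K\setminus W$, consider $F:=(x_1^{-1},\dots,x_n^{-1})A\in{\boldsymbol f}(A)$. The elementary observation, valid in any valuation domain $V'$ of $K$ because $FV'$ is principal, generated by the $x_i^{-1}$ of least value, is that $1\in FV'$ if and only if $x_i\in V'$ for some $i$. Now suppose, for contradiction, that $\{V\in Y\mid x_i\notin V \text{ for all } i\}=\emptyset$; then every $V\in Y$ contains some $x_i$, so $1\in FV$ for all $V\in Y$, whence $1\in\bigcap_{V\in Y}FV=F^{\wedge_Y}\subseteq FW$ by hypothesis. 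Applying the observation to $W$ forces $x_i\in W$ for some $i$, contradicting $x_1,\dots,x_n\in K\setminus W$. Hence every finite intersection is nonempty, the ultrafilter $\ms U$ is produced, and the identity is established, completing the proof.
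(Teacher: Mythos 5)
The survey itself states Theorem \ref{semistar-closure} without proof, citing \cite{fifolo-13b}; the route taken there, whose traces are visible in Proposition \ref{complete}, Corollary \ref{inv-wedge} and Proposition \ref{prop-olb}, passes through \texttt{e.a.b.} operations and Kronecker function rings: condition (i) is first matched with the ring-theoretic equality $\Kr(Y_1)=\Kr(Y_2)$, which is then matched with $\Cl^{\mbox{\tiny{\texttt{inv}}}}(Y_1)=\Cl^{\mbox{\tiny{\texttt{inv}}}}(Y_2)$ via the homeomorphism $\zar(K(T)|\Kr(Y))\cong\Cl^{\mbox{\tiny{\texttt{inv}}}}(Y)$ of Theorem \ref{zar-spectral}; finally, condition (ii) is exactly inverse-closure equality, since $\Cl^{\mbox{\tiny{\texttt{inv}}}}(Y)=\Cl^{R}(\Cl^{\mbox{\tiny{\texttt{cons}}}}(Y))=\Cl^{\mbox{\tiny{\texttt{ultra}}}}(Y)^\downharpoonright$ by Corollary \ref{R-inv}(2) and Theorem \ref{zar-ultra}(3). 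Your proof bypasses $K(T)$ entirely: your $\mathcal W(\wedge_Y)$ is precisely the set $\boldsymbol{\mathcal V}(\wedge_Y)$ of $\wedge_Y$-valuation overrings introduced later in the survey, and your identity $\boldsymbol{\mathcal V}(\wedge_Y)=\Cl^{\mbox{\tiny{\texttt{ultra}}}}(Y)^\downharpoonright$ is proved by a direct finite-intersection-property construction of an ultrafilter $\ms U$ with $A_{\ms U}\subseteq W$; the key step (taking $F:=(x_1^{-1},\dots,x_n^{-1})A$ and observing that $1\in FV'$ iff $x_i\in V'$ for some $i$, since $FV'$ is principal in a valuation ring) is correct. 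The Kronecker route buys the dictionary with function rings that the rest of the survey exploits; your route buys elementarity and makes explicit the useful fact that $\Cl^{\mbox{\tiny{\texttt{inv}}}}(Y)$ coincides with the set of $\wedge_Y$-valuation overrings.

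One claim in your (ii)~$\Rightarrow$~(i) step is, however, false and needs repair, though the implication itself survives. You quote the survey's remark $\wedge_Y=\wedge_{\scriptsize\Cl^{\mbox{\tiny{\texttt{ultra}}}}(Y)}$ and conclude that (ii) forces $\wedge_{Y_1}=\wedge_{Y_2}$ \emph{outright}. That remark is accurate only on finitely generated modules (where your own least-value-generator argument proves it); on arbitrary modules the inclusion $F^{\wedge_{\scriptsize\Cl^{\mbox{\tiny{\texttt{ultra}}}}(Y)}}\subseteq F^{\wedge_Y}$ can be strict. For instance, let $K:=k(t_1,t_2,\dots)$, let $V_n$ be the valuation ring of the monomial valuation $v_n$ with values in $\mathbb{Z}^n$ ordered lexicographically ($v_n(t_i)=e_i$ for $i\leq n$, $v_n(t_j)=0$ for $j>n$), so that $V_1\supseteq V_2\supseteq\cdots$, let $A:=\bigcap_n V_n$ (a valuation ring with quotient field $K$), $Y:=\{V_n\mid n\geq 1\}$, and $E:=\sum_{i\geq 2}t_iA$. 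Then $1\in EV_n$ for every $n$, because $t_{n+1}$ is a unit of $V_n$; but the limit of $Y$ along any nonprincipal ultrafilter is $A$ itself (any $z\notin A$ lies outside $V_n$ for all but finitely many $n$), and $1\notin EA=E$, every element of $E$ having strictly positive value. Hence, taking $Y_1:=Y$ and $Y_2:=\Cl^{\mbox{\tiny{\texttt{ultra}}}}(Y)$, condition (ii) holds while $\wedge_{Y_1}\neq\wedge_{Y_2}$ --- which is exactly why the theorem concerns the finite-type operations. The repair is already contained in your proof: (i) only asserts agreement on ${\boldsymbol f}(A)$, and this follows either from the finitely generated case of the quoted remark, or, avoiding that remark altogether, from your identity together with the observation that $Y\subseteq\boldsymbol{\mathcal V}(\wedge_Y)$ forces $F^{\wedge_Y}=F^{\wedge_{\boldsymbol{\mathcal V}(\wedge_Y)}}$ for every $F\in{\boldsymbol f}(A)$.
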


Let $A$ be an integral domain, $K$ its quotient field and $Z:=\zar(K|A)$. For any nonempty subset $Y \subseteq Z$, consider the $K-$function ring 
$$
\Kr(Y):=\bigcap \{V(T)\mid V\in Y\}\,.
$$
Note that, if we consider on the integral domain $A$ the valuative (e.a.b.) semistar operation $\wedge_Y$ defined above, then the Kronecker function ring associated to  $\wedge_Y$, $\Kr(A, \wedge_Y)$, coincides with  $\Kr(Y)$ \cite[Corollary 3.8]{folo-01a}.

 We say that $A$ is \emph{a vacant domain} if it is integrally closed and, for any representation $Y$ of $A$ (i.e., $A = \bigcap \{V\mid V\in Y\}$), we have $\Kr(Y)=\Kr(Z)$; for instance, a Pr\"ufer domain is vacant (see \cite{fa-10}).

\begin{cor} \label{vacant}
Let $A$ be an integrally closed domain and $K$  its quotient field. The following conditions are equivalent.
\begin{enumerate}[\rm(i)]
\item $A$ is a vacant domain.
\item For any representation $Y$ of $A$, $\ad^{\mbox{\tiny{\texttt{ultra}}}}
(Y)^\downharpoonright=
 \zar(K|A)$. 
\end{enumerate}
\end{cor}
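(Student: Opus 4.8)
The plan is to use Theorem~\ref{semistar-closure} to convert the ring-theoretic definition of vacancy into the stated condition on ultrafilter closures. First I would record that, because $A$ is integrally closed with $K=\mathrm{qf}(A)$, the whole space $Z:=\zar(K|A)$ is itself a representation of $A$; indeed the integral closure of $A$ in $K$ equals $\bigcap\{V\mid V\in Z\}$, so $A=\bigcap\{V\mid V\in Z\}$. Hence the defining property ``$A$ is vacant'' reads: for every representation $Y$ of $A$ one has $\Kr(Y)=\Kr(Z)$.

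The key technical input, and the step I expect to require the most care, is the dictionary between Kronecker function rings and finite-type \texttt{e.a.b.} semistar operations. Using the identity $\Kr(Y)=\Kr(A,\wedge_Y)$ recalled above, together with the facts from the Fontana--Loper theory that $\Kr(A,\star)=\Kr(A,\star_f)$ and that $\star_f$ is recovered from its Kronecker ring through $F^{\star_f}=F\,\Kr(A,\star)\cap K$ for $F\in{\boldsymbol f}(A)$, I would deduce that for nonempty $Y_1,Y_2\subseteq Z$,
\[
\Kr(Y_1)=\Kr(Y_2)\ \Longleftrightarrow\ (\wedge_{Y_1})_f=(\wedge_{Y_2})_f .
\]
The reverse implication is immediate, since $\Kr$ is insensitive to the passage to the finite-type companion; the forward implication follows from the recovery formula, which forces the two finite-type operations to agree on ${\boldsymbol f}(A)$ and hence everywhere.

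Granting this equivalence, Theorem~\ref{semistar-closure} rewrites equality of the finite-type operations as equality of the Zariski--generic closures of the ultrafilter closures, so that $\Kr(Y)=\Kr(Z)$ holds if and only if $\ad^{\mbox{\tiny{\texttt{ultra}}}}(Y)^{\downharpoonright}=\ad^{\mbox{\tiny{\texttt{ultra}}}}(Z)^{\downharpoonright}$. It then remains only to evaluate the right-hand side: since $Z$ is the entire space it is trivially ultrafilter closed, giving $\ad^{\mbox{\tiny{\texttt{ultra}}}}(Z)=Z$, and its Zariski--generic closure is again $Z$ because each $W\in Z$ contains itself. Thus $\ad^{\mbox{\tiny{\texttt{ultra}}}}(Z)^{\downharpoonright}=Z=\zar(K|A)$, and stringing the equivalences together, performed representation by representation, shows that $A$ is vacant if and only if $\ad^{\mbox{\tiny{\texttt{ultra}}}}(Y)^{\downharpoonright}=\zar(K|A)$ for every representation $Y$ of $A$, which is exactly condition~(ii).
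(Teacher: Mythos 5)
Your proof is correct and follows essentially the route the paper intends for this corollary: the identification $\Kr(Y)=\Kr(A,\wedge_Y)$, the injectivity of $\star\mapsto\Kr(A,\star)$ on complete (finite-type \texttt{e.a.b.}) semistar operations from Proposition~\ref{complete} (so that $\Kr(Y_1)=\Kr(Y_2)\Leftrightarrow(\wedge_{Y_1})_f=(\wedge_{Y_2})_f$, as also recorded in Corollary~\ref{inv-wedge}(a)), then Theorem~\ref{semistar-closure} applied with $Y_2=Z$, using that $\ad^{\mbox{\tiny{\texttt{ultra}}}}(Z)^{\downharpoonright}=Z$ and that $Z$ is a representation of the integrally closed domain $A$. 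No gaps; this matches the paper's (implicit) argument.
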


Keeping in mind that it is known that the ultrafilter topology  and the constructible topology on $\zar(K|A)$ coincide (Theorem \ref{zar-ultra}(3)),  the following result follows easily from Corollary \ref{R-inv}(2). 

\begin{prop}
Let $K$ be a field and $A$ be a subring of $K$. For any subset $Y$ of $\zar(K|A)$,  $\ad^{\mbox{\tiny{\texttt{inv}}}}(Y)=\ad^{R}(\ad^{\mbox{\tiny{\texttt{ultra}}}}(Y))$.
\end{prop}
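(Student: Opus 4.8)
The plan is to derive the identity directly from the abstract spectral-space formula of Corollary \ref{R-inv}(2), combined with the identification of the ultrafilter topology with the constructible topology. First I would invoke Theorem \ref{zar-spectral}(4), which certifies that $\zar(K|A)$, equipped with the Zariski topology, is a spectral space. This is exactly the hypothesis under which the inverse-topology machinery of Corollary \ref{R-inv} is available, so it must be established (or cited) before anything else.

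Next, applying Corollary \ref{R-inv}(2) to the spectral space $\ms X := \zar(K|A)$ and to the given subset $Y$ yields immediately
$$
\Cl^{\mbox{\tiny{\texttt{inv}}}}(Y) = \Cl^{R}(\Cl^{\mbox{\tiny{\texttt{cons}}}}(Y)).
$$
It then remains only to replace the constructible closure by the ultrafilter closure. This substitution is justified by Theorem \ref{zar-ultra}(3), which asserts that the ultrafilter topology on $\zar(K|A)$ \emph{coincides} with the constructible topology. Since these are two topologies on the same underlying set $\zar(K|A)$, their coincidence means their closure operators agree on every subset, whence $\Cl^{\mbox{\tiny{\texttt{cons}}}}(Y) = \Cl^{\mbox{\tiny{\texttt{ultra}}}}(Y)$. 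Feeding this into the displayed equality gives $\Cl^{\mbox{\tiny{\texttt{inv}}}}(Y) = \Cl^{R}(\Cl^{\mbox{\tiny{\texttt{ultra}}}}(Y))$, which is the claim.

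There is no substantive obstacle here: the argument is a single substitution into an already-proved formula, and the paper itself flags it as following ``easily'' from Corollary \ref{R-inv}(2). The only points demanding care are bookkeeping ones. One should confirm that the $R$-topology, the inverse topology, and the constructible topology in Corollary \ref{R-inv} are all computed with respect to the \emph{same} spectral (Zariski) structure on $\zar(K|A)$ that Theorem \ref{zar-spectral} supplies, and that Theorem \ref{zar-ultra}(3) is genuinely an equality of topologies on $\zar(K|A)$ rather than a mere homeomorphism with some other space — which it is, both being topologies on the common set $\zar(K|A)$. Once these identifications are noted, the chain of equalities closes at once.
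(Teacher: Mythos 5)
Your proof is correct and is essentially the paper's own argument: the paper derives the identity from Corollary \ref{R-inv}(2) applied to the spectral space $\zar(K|A)$, together with Theorem \ref{zar-ultra}(3) identifying the ultrafilter and constructible topologies. Your only addition is to make explicit the citation of Theorem \ref{zar-spectral}(4) for the spectral-space hypothesis, which the paper leaves implicit.
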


From the previous proposition, we can restate Corollary \ref{vacant} as follows: $A$ is a vacant domain if and only if
for any representation $Y$ of $A$, $\ad^{\mbox{\tiny{\texttt{inv}}}}
(Y)=\zar(K|A)$.

\bigskip

Recall that a \emph{semistar operation} is \emph{complete} if it is \texttt{e.a.b.} and of finite type. 
 In order to state some characterizations of the complete semistar operations, we need some terminology.

For a domain $A$  and a semistar operation $\star$ on $A$, we say that a valuation
overring $V$ of $A$ is a {\it $\star$-valuation overring of $A$} provided  $F^\star
\subseteq FV$, for each finitely generated $A$-module $F$ contained in the quotient field $K$ of $A$.
Set $\boldsymbol{\mathcal V}(\star) := \{V \mid V \mbox{ is a $\star$-valuation overring of } A\}$ and let $ b(\star) :=
\wedge_{\boldsymbol{\mathcal V}(\star)}$. Finally, if $\star$ is an  \texttt{e.a.b.} semistar operation on $A$, we can consider the Kronecker function ring $\Kr(A, \star):= \bigcap \{V(T) \mid V \in \boldsymbol{\mathcal V}(\star)\}= \Kr(\boldsymbol{\mathcal V}(\star)) $ \cite[Theorem 14(3)]{folo-06} and we can define a semistar operation $\kr(\star)$ on $A$ by setting $E^{\footnotesize \kr(\star)} := E\Kr(A, \star) \cap K$ for each nonzero $A$-module $E$ contained in $K$ (see for instance, \cite{folo-09}). Then,

\begin{prop}\label{complete}
 \emph{(Fontana-Loper  \cite[Proposition 3.4]{folo-01b}, \cite[Corollary 5.2]{folo-01a}, \cite[Proposition 6.3]{folo-07}, \cite{folo-09})}
 Given a semistar operation $\star$, the following are equivalent:
\begin{enumerate} 
 \item[\rm (i)] $\star$ is complete.
  \item[\rm (ii)] $\star = b(\star)$.
   \item[\rm (iii)] $\star = \kr(\star)$.
   \end{enumerate}

\end{prop}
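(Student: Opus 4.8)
The plan is to prove the three conditions equivalent by reducing every statement to the behaviour of $\star$ on finitely generated modules and then exploiting finite type to pass to arbitrary modules in $\overline{\boldsymbol F}(A)$. The unifying point is that $\star$ (through $\boldsymbol{\mathcal V}(\star)$), $b(\star)=\wedge_{\boldsymbol{\mathcal V}(\star)}$ and $\kr(\star)$ are all governed by the single family $\boldsymbol{\mathcal V}(\star)$ of $\star$-valuation overrings and by the Kronecker function ring $\Kr(A,\star)=\bigcap\{V(T)\mid V\in\boldsymbol{\mathcal V}(\star)\}$. Concretely I would establish the two formal implications (ii)$\Rightarrow$(i) and (iii)$\Rightarrow$(i), and then the substantial implication (i)$\Rightarrow$(ii)\,\&\,(iii), which I prove simultaneously by showing that all three operations agree on $\boldsymbol f(A)$ when $\star$ is complete.

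For the formal direction I would first record the facts that need no completeness hypothesis. Since $b(\star)$ is a valuative semistar operation it is automatically \texttt{e.a.b.}; and since $\Kr(A,\star)$ is a B\'ezout domain with quotient field $K(T)$ (Proposition \ref{basic}(3)) and $E\Kr(A,\star)=\bigcup\{F\Kr(A,\star)\mid F\in\boldsymbol f(A),\ F\subseteq E\}$, one checks that $E^{\kr(\star)}=E\Kr(A,\star)\cap K=\bigcup\{F^{\kr(\star)}\mid F\subseteq E \text{ finitely generated}\}$, so $\kr(\star)$ is of finite type; it is \texttt{e.a.b.} because on $\boldsymbol f(A)$ it is computed as an intersection of extensions $FV$. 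Thus $\kr(\star)$ is always complete. The same must be shown for $b(\star)$, the only delicate point being finite type; I would obtain this by proving $b(\star)=\kr(\star)$, i.e. $\bigcap_{V}EV\subseteq E\Kr(A,\star)\cap K$, which is an integral-dependence argument over the B\'ezout domain $\Kr(A,\star)$ (an element of $\bigcap_V EV$ already lies in $F\Kr(A,\star)$ for a suitable finitely generated $F\subseteq E$). Granting this, both (ii)$\Rightarrow$(i) and (iii)$\Rightarrow$(i) are immediate: an operation equal to a complete one is complete.

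The decisive step is (i)$\Rightarrow$(ii)\,\&\,(iii). Here I would prove the Krull-type statement that, for a complete $\star$, $F^\star=\bigcap\{FV\mid V\in\boldsymbol{\mathcal V}(\star)\}$ for every $F\in\boldsymbol f(A)$. The inclusion $F^\star\subseteq\bigcap\{FV\}$ is immediate from the definition of $\boldsymbol{\mathcal V}(\star)$. The reverse inclusion is the heart of the matter and the place where I expect the main obstacle: given $x\notin F^\star$ one must manufacture a $\star$-valuation overring $V$ with $x\notin FV$. I would do this by a Zorn's lemma argument in the spirit of Krull's theorem that the integral closure is the intersection of the valuation overrings, the \texttt{e.a.b.} cancellation property being exactly what guarantees that an overring maximal with respect to avoiding $x$ (in the appropriate sense) is a valuation domain belonging to $\boldsymbol{\mathcal V}(\star)$; finite type is what lets me restrict attention to finitely generated $F$ throughout.

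Finally I would assemble the pieces. By Proposition \ref{basic}(4) the ideal $F\Kr(A,\star)$ is principal, generated by the polynomial whose coefficients generate $F$, whence $F^{\kr(\star)}=F\Kr(A,\star)\cap K=\bigcap\{FV\mid V\in\boldsymbol{\mathcal V}(\star)\}=F^{b(\star)}$ for every $F\in\boldsymbol f(A)$. Combined with the Krull-type equality of the previous paragraph this gives $F^\star=F^{\kr(\star)}=F^{b(\star)}$ on all finitely generated modules. Since $\star$, $\kr(\star)$ and $b(\star)$ are each of finite type, agreement on $\boldsymbol f(A)$ propagates to all of $\overline{\boldsymbol F}(A)$, yielding $\star=\kr(\star)=b(\star)$ and completing the equivalence. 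The genuinely hard ingredient is thus the valuation-existence (Krull-type) lemma; everything else is either a formal consequence of the B\'ezout structure of $\Kr(A,\star)$ or a finite-type bootstrap.
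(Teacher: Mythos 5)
First, a point of context: this survey does not prove Proposition \ref{complete} at all --- it is quoted with citations to \cite{folo-01a}, \cite{folo-01b}, \cite{folo-07}, \cite{folo-09} (the underlying result is essentially \cite[Theorem 32.11]{gi-72} adapted to semistar operations) --- so your attempt has to be measured against those sources. The outer layer of your argument is sound. With the survey's definition $\Kr(A,\star)=\bigcap\{V(T)\mid V\in\boldsymbol{\mathcal V}(\star)\}$, Proposition \ref{basic}(4)--(5) together with the content formula $fV(T)\cap K=c(f)V$ for valuation domains do give $F^{\kr(\star)}=F\Kr(A,\star)\cap K=\bigcap\{FV\mid V\in\boldsymbol{\mathcal V}(\star)\}=F^{b(\star)}$ for every $F\in{\boldsymbol f}(A)$; $\kr(\star)$ is of finite type because $E\Kr(A,\star)=\bigcup\{F\Kr(A,\star)\mid F\in{\boldsymbol f}(A),\ F\subseteq E\}$; e.a.b.\ is a condition on finitely generated modules only, so it transfers from the valuative operation $b(\star)$ to $\kr(\star)$; and your claim $b(\star)=\kr(\star)$ can indeed be completed by localizing the B\'ezout domain $\Kr(A,\star)$ at its maximal ideals and invoking Proposition \ref{zar0} to see that every valuation overring of $\Kr(A,\star)$ is a trivial extension $V(T)$ with $V=V(T)\cap K\in\boldsymbol{\mathcal V}(\star)$. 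Once that is in place, (ii) and (iii) are literally the same condition, both imply (i), and $\star\leq b(\star)$ whenever $\star$ has finite type. But note that, precisely for this reason, none of this carries the real weight of the proposition.

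The weight sits exactly where you put it --- the Krull-type inclusion $\bigcap\{FV\mid V\in\boldsymbol{\mathcal V}(\star)\}\subseteq F^{\star}$ for complete $\star$ --- and there your proposal has a genuine gap. A Zorn's lemma argument on overrings of $A$ ``maximal with respect to avoiding $x$ (in the appropriate sense)'' is not a proof: the phrase ``in the appropriate sense'' is doing all the work, and there is a concrete reason to doubt it can be made to work as described. Membership in $\boldsymbol{\mathcal V}(\star)$ is a condition quantified over \emph{all} finitely generated $G$ (namely $G^{\star}\subseteq GV$), whereas a Krull-style domination/maximality argument controls only the single element $x$; nothing in the maximality forces the resulting valuation ring to be compatible with $\star$, and along a chain of overrings there is no canonical way to transport $\star$ so that e.a.b.\ cancellation can be applied at the limit. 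The proofs in the cited sources proceed differently: one constructs the Kronecker function ring \emph{explicitly} as $\{f/g\mid f,g\in A[T],\ g\neq 0,\ c(f)^{\star}\subseteq c(g)^{\star}\}\cup\{0\}$, proves via Dedekind--Mertens plus e.a.b.\ cancellation the Gauss-lemma identity $(c(fg))^{\star}=(c(f)c(g))^{\star}$ (this is where e.a.b.\ genuinely enters), deduces that this set is a B\'ezout domain satisfying $F\Kr\cap K=F^{\star}$ for finitely generated $F$, and only then obtains enough $\star$-valuation overrings as contractions $W\cap K$ of valuation overrings $W$ of that ring (an alternative route goes through Lorenzen's lattice-ordered group theory, not through domination of overrings of $A$). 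Since in the survey's formulation (ii) and (iii) coincide after your own reductions, this unproved lemma is not a missing detail: it is the entire content of the proposition.
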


The following result provides a topological characterization of when a semistar operation is complete.

\begin{thm} \label{complete2} 
 \emph{(Finocchiaro-Fontana-Loper  \cite[Theorem 4.13]{fifolo-13b})} 
Let $A$ be an integral domain, $K$  its quotient field and $\star$  a semistar operation on $A$. Then, the following conditions are equivalent. 
\begin{enumerate}[\rm(i)]
\item $\star$ is complete.
\item There exists a closed subset { $Y$} of $\zar(K|A)^{\mbox{\tiny{\texttt{cons}}}}$  such that {$Y = {Y}^\downharpoonright$ and $\star=\wedge_{Y}$.}

\item There exists a compact  subspace { $Y'$} in $\zar(K|A)^{\mbox{\tiny{\texttt{cons}}}}$  such that { $\star=\wedge_{Y'}$}.

\item There exists a { quasi-}compact { subspace of} $Y''$ of $\zar(K|A)^{\mbox{\tiny{\texttt{zar}}}}$  such that  $\star=\wedge_{Y''}$.
\end{enumerate}
\end{thm}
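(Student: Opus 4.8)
The plan is to prove the cycle of implications $\mathrm{(i)}\Rightarrow\mathrm{(ii)}\Rightarrow\mathrm{(iii)}\Rightarrow\mathrm{(iv)}\Rightarrow\mathrm{(i)}$. Two of these links are essentially formal. For $\mathrm{(ii)}\Rightarrow\mathrm{(iii)}$ I would use that $\zar(K|A)^{\mbox{\tiny{\texttt{cons}}}}$ is compact Hausdorff (Theorem~\ref{zar-ultra}(2), recalling that the constructible and ultrafilter topologies agree by Theorem~\ref{zar-ultra}(3)); hence the closed subset $Y$ supplied by $\mathrm{(ii)}$ is itself compact, and $Y'=Y$ does the job. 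For $\mathrm{(iii)}\Rightarrow\mathrm{(iv)}$, since the constructible topology is finer than the Zariski topology, the identity map $\zar(K|A)^{\mbox{\tiny{\texttt{cons}}}}\to\zar(K|A)^{\mbox{\tiny{\texttt{zar}}}}$ is continuous, so a constructibly compact $Y'$ is Zariski quasi-compact; one takes $Y''=Y'$.

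For $\mathrm{(i)}\Rightarrow\mathrm{(ii)}$ the natural witness is the set $\boldsymbol{\mathcal V}(\star)$ of $\star$-valuation overrings of $A$. By completeness and Proposition~\ref{complete} we have $\star=b(\star)=\wedge_{\boldsymbol{\mathcal V}(\star)}$. Moreover $\boldsymbol{\mathcal V}(\star)=\boldsymbol{\mathcal V}(\star)^{\downharpoonright}$, because if $V\in\boldsymbol{\mathcal V}(\star)$ and $V\subseteq W$ then $F^\star\subseteq FV\subseteq FW$ for every $F\in\boldsymbol{f}(A)$. The substantive point is that $\boldsymbol{\mathcal V}(\star)$ is closed in $\zar(K|A)^{\mbox{\tiny{\texttt{cons}}}}$. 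To see this I would fix $F=(x_1,\dots,x_n)\in\boldsymbol{f}(A)$ (with the $x_i$ nonzero) and $z\in K$, and observe that for a valuation domain $V$ one has $z\in FV$ exactly when some generator $x_j$ has least value and $z/x_j\in V$; concretely
\[
\{V\mid z\in FV\}=\bigcup_{j=1}^{n}\Bigl(B_{z/x_j}\cap\bigcap_{i=1}^{n}B_{x_i/x_j}\Bigr).
\]
Since each $B_y$ is clopen in the constructible topology (Theorem~\ref{zar-ultra}(3)), this set is clopen there; intersecting over $z\in F^\star$ and then over $F\in\boldsymbol{f}(A)$ exhibits $\boldsymbol{\mathcal V}(\star)=\bigcap_{F}\bigcap_{z\in F^\star}\{V\mid z\in FV\}$ as a constructibly closed set, as required.

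The crux is $\mathrm{(iv)}\Rightarrow\mathrm{(i)}$, where I must upgrade mere quasi-compactness to the finite-type (hence complete) property. That $\wedge_{Y''}$ is \texttt{e.a.b.} is automatic, since every valuative semistar operation is. For finite type, fix $E\in\overline{\boldsymbol F}(A)$ and $z\in E^{\wedge_{Y''}}=\bigcap_{V\in Y''}EV$. For each $V\in Y''$, writing $z$ as a finite $V$-combination of elements of $E$ produces a finitely generated $G_V\subseteq E$ with $z\in G_VV$; the sets $\{W\in Y''\mid z\in G_VW\}$ are Zariski-open (by the same basic-open-set computation as above) and cover $Y''$. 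Quasi-compactness extracts a finite subcover indexed by $V_1,\dots,V_N$, and then $G:=G_{V_1}+\dots+G_{V_N}\subseteq E$ is finitely generated with $z\in GW$ for every $W\in Y''$, i.e.\ $z\in G^{\wedge_{Y''}}$. Hence $E^{\wedge_{Y''}}=\bigcup\{G^{\wedge_{Y''}}\mid G\in\boldsymbol{f}(A),\ G\subseteq E\}$, so $\wedge_{Y''}$ is of finite type.

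The main obstacle is precisely this last step: the passage from the \emph{local} finite representability of $z$ at each individual $V$ to a single \emph{uniform} finitely generated $G\subseteq E$, which is exactly what the Zariski quasi-compactness of $Y''$ delivers. The enabling technical fact, used in both $\mathrm{(i)}\Rightarrow\mathrm{(ii)}$ and $\mathrm{(iv)}\Rightarrow\mathrm{(i)}$, is that the sets $\{W\mid z\in GW\}$ are Zariski-open (equivalently, constructibly clopen); verifying this cleanly is the part I would write out most carefully, as it is what makes both the closedness argument and the covering argument run.
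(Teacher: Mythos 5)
Your proof is correct, but there is nothing in this survey to measure it against: Theorem \ref{complete2} is stated here without proof, as a quotation of \cite[Theorem 4.13]{fifolo-13b}, so your argument has to stand on its own — and it does. The cycle (i)$\Rightarrow$(ii)$\Rightarrow$(iii)$\Rightarrow$(iv)$\Rightarrow$(i) is well chosen: (ii)$\Rightarrow$(iii) and (iii)$\Rightarrow$(iv) are exactly the formal observations you make (closed subsets of the compact Hausdorff space $\zar(K|A)^{\mbox{\tiny{\texttt{cons}}}}$, see Theorem \ref{zar-ultra}(2) and (3), are compact, and compactness survives passage to the coarser Zariski topology), while both substantive implications rest on your identity
\[
\{V\in\zar(K|A) \mid z\in FV\}=\bigcup_{j=1}^{n}\Bigl(B_{z/x_j}\cap\bigcap_{i=1}^{n}B_{x_i/x_j}\Bigr),\qquad F=(x_1,\dots,x_n),\ x_i\neq 0,
\]
which is valid because $FV=x_jV$ whenever $x_j$ has minimal value among the generators; hence this set is Zariski-open and constructibly clopen by Theorem \ref{zar-ultra}(3). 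This yields the constructible closedness of $\boldsymbol{\mathcal V}(\star)=\bigcap_F\bigcap_{z\in F^\star}\{V\mid z\in FV\}$ (generization-stability and $\star=\wedge_{\boldsymbol{\mathcal V}(\star)}$ coming from Proposition \ref{complete}), and in (iv)$\Rightarrow$(i) it makes the sets $\{W\in Y''\mid z\in G_VW\}$ an open cover of $Y''$, from which quasi-compactness extracts the uniform finitely generated $G\subseteq E$; the \texttt{e.a.b.} property is indeed free for valuative operations. Your route is more elementary and self-contained than the one suggested by the survey's own scaffolding, in which this theorem sits downstream of Theorem \ref{semistar-closure} and upstream of Corollary \ref{inv-wedge} and of the identity $(\wedge_Y)_f=\wedge_{\Cl^{\mbox{\tiny{\texttt{cons}}}}(Y)}$: a proof along those lines would combine Theorem \ref{semistar-closure} with Corollary \ref{R-inv}(8) (quasi-compactness of $Y$ is equivalent to $\Cl^{\mbox{\tiny{\texttt{inv}}}}(Y)=\Cl^R(Y)$) and the Kronecker function ring apparatus of Theorem \ref{zar-spectral}, but within the survey's ordering one must be careful not to invoke Corollary \ref{inv-wedge}, which is itself deduced from the theorem being proved. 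Your argument sidesteps any such circularity at the modest cost of re-proving directly the openness/clopenness facts that the general machinery encapsulates.
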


From Theorems \ref{semistar-closure} and \ref{complete2}  we easily deduce the following:

\begin{cor}
Let $A$ be an integral domain, $K$  its quotient field, and $Y$  a nonempty subset of  \ $\zar(K|A)$. Then $(\wedge_Y)_f=\wedge_{\scriptsize\ad^{\mbox{\tiny{\texttt{cons}}}}(Y)}$.
\end{cor}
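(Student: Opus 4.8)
The plan is to combine Theorem~\ref{semistar-closure} with Theorem~\ref{complete2}, exploiting the coincidence of the ultrafilter and constructible topologies on $\zar(K|A)$ recorded in Theorem~\ref{zar-ultra}(3). First I would observe that $\Cl^{\mbox{\tiny{\texttt{cons}}}}(Y) = \Cl^{\mbox{\tiny{\texttt{ultra}}}}(Y)$, so that everything can be rephrased in terms of the ultrafilter closure that appears in Theorem~\ref{semistar-closure}.

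Next, I would apply Theorem~\ref{semistar-closure} to the pair $Y_1 := Y$ and $Y_2 := \Cl^{\mbox{\tiny{\texttt{cons}}}}(Y) = \Cl^{\mbox{\tiny{\texttt{ultra}}}}(Y)$. Since $\Cl^{\mbox{\tiny{\texttt{ultra}}}}$ is a Kuratowski closure operator (Theorem~\ref{zar-ultra}(1)), it is idempotent, whence $\Cl^{\mbox{\tiny{\texttt{ultra}}}}(Y_2) = \Cl^{\mbox{\tiny{\texttt{ultra}}}}(Y_1)$ and a fortiori $\Cl^{\mbox{\tiny{\texttt{ultra}}}}(Y_1)^\downharpoonright = \Cl^{\mbox{\tiny{\texttt{ultra}}}}(Y_2)^\downharpoonright$. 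Thus condition (ii) of Theorem~\ref{semistar-closure} holds, and its equivalence with (i) yields $(\wedge_Y)_f = (\wedge_{\Cl^{\mbox{\tiny{\texttt{cons}}}}(Y)})_f$.

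It then remains to show that $\wedge_{\Cl^{\mbox{\tiny{\texttt{cons}}}}(Y)}$ is already of finite type, i.e.\ that it equals its own finite-type associate. Here I would invoke Theorem~\ref{complete2}. By Theorem~\ref{zar-ultra}(2)--(3) the space $\zar(K|A)^{\mbox{\tiny{\texttt{cons}}}}$ is compact and Hausdorff, so $\Cl^{\mbox{\tiny{\texttt{cons}}}}(Y)$, being closed in it, is itself a compact subspace. Hence condition (iii) of Theorem~\ref{complete2} is met with $Y' := \Cl^{\mbox{\tiny{\texttt{cons}}}}(Y)$ and $\star := \wedge_{\Cl^{\mbox{\tiny{\texttt{cons}}}}(Y)}$, and the implication (iii)$\Rightarrow$(i) shows that $\wedge_{\Cl^{\mbox{\tiny{\texttt{cons}}}}(Y)}$ is complete, hence \texttt{e.a.b.} and of finite type; in particular $(\wedge_{\Cl^{\mbox{\tiny{\texttt{cons}}}}(Y)})_f = \wedge_{\Cl^{\mbox{\tiny{\texttt{cons}}}}(Y)}$. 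Combining this with the previous paragraph gives $(\wedge_Y)_f = (\wedge_{\Cl^{\mbox{\tiny{\texttt{cons}}}}(Y)})_f = \wedge_{\Cl^{\mbox{\tiny{\texttt{cons}}}}(Y)}$, as desired.

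The routine inputs (idempotency of the closure operator, and a closed subset of a compact space being compact) are immediate. The only point requiring care is that Theorem~\ref{semistar-closure} compares the \emph{finite-type} operations, so one must separately argue, via the compactness characterization of Theorem~\ref{complete2}(iii), that $\wedge_{\Cl^{\mbox{\tiny{\texttt{cons}}}}(Y)}$ is \emph{itself} of finite type in order to drop the final subscript $f$. I expect this last identification---recognizing that the constructible closure supplies exactly the compact representing set demanded by Theorem~\ref{complete2}(iii)---to be the conceptual crux, even though it is short.
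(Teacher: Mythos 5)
Your proposal is correct and takes essentially the same route as the paper, which states that the corollary is deduced precisely by combining Theorem~\ref{semistar-closure} (applied, as you do, to $Y_1:=Y$ and $Y_2:=\Cl^{\mbox{\tiny{\texttt{cons}}}}(Y)=\Cl^{\mbox{\tiny{\texttt{ultra}}}}(Y)$, using idempotency of the closure) with Theorem~\ref{complete2}. Your step showing that $\wedge_{\Cl^{\mbox{\tiny{\texttt{cons}}}}(Y)}$ is of finite type---because $\Cl^{\mbox{\tiny{\texttt{cons}}}}(Y)$ is closed, hence compact, in the compact Hausdorff space $\zar(K|A)^{\mbox{\tiny{\texttt{cons}}}}$, so Theorem~\ref{complete2}(iii)$\Rightarrow$(i) applies---is exactly the intended justification for dropping the final subscript $f$.
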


Finally,  we can formulate some of the previous results in terms of Hochster's inverse topology   \cite[Theorem 4.9, Corollary 4.10]{fifolo-13b}.

\begin{cor} \label{inv-wedge}
Let $A$ be an integral domain, $K$ be its quotient field. The following statements hold.
\begin{enumerate}[\rm(a)]
\item If $Y_1,Y_2$ are nonempty subsets of $\zar(K|A)$, then  the following are equivalent.

\begin{enumerate}[\rm(i)]
\item $(\wedge_{Y_1})_f = (\wedge_{Y_2})_f$.
\item $\Kr(Y_1) = \Kr(Y_2)$.
 \item $\Cl^{\mbox{\tiny{\texttt{inv}}}}(Y_1) = \Cl^{\mbox{\tiny{\texttt{inv}}}}(Y_2)$.
 \end{enumerate}

\item $A$ is a vacant domain if and only if it is integrally closed and any representation $Y$ of $A$ is dense in $\zar(K|A)$ with respect to the inverse topology. 
\item For any nonempty subset $Y$ of $\zar(K|A)$,  $(\wedge_Y)_f=\wedge_{\scriptsize\ad^{
\mbox{\tiny{\texttt{inv}}}}(Y)}$.
\end{enumerate}
\end{cor}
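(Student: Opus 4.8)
The plan is to obtain all three parts by assembling the results already in hand, the one unifying identity being that, for every subset $Y$ of $\zar(K|A)$, the inverse closure is the Zariski--generic closure of the ultrafilter closure:
$$
\Cl^{\mbox{\tiny{\texttt{inv}}}}(Y)=\Cl^{R}(\Cl^{\mbox{\tiny{\texttt{ultra}}}}(Y))=\Cl^{\mbox{\tiny{\texttt{ultra}}}}(Y)^\downharpoonright ,
$$
which is exactly the Proposition immediately preceding this corollary, combined with $Y^\downharpoonright=\Cl^{R}(Y)$. With this in place, part (a) splits into two equivalences. First I would prove (i)$\Leftrightarrow$(iii): by Theorem \ref{semistar-closure}, $(\wedge_{Y_1})_f=(\wedge_{Y_2})_f$ holds precisely when $\Cl^{\mbox{\tiny{\texttt{ultra}}}}(Y_1)^\downharpoonright=\Cl^{\mbox{\tiny{\texttt{ultra}}}}(Y_2)^\downharpoonright$, and rewriting each side by the displayed identity turns this into $\Cl^{\mbox{\tiny{\texttt{inv}}}}(Y_1)=\Cl^{\mbox{\tiny{\texttt{inv}}}}(Y_2)$.

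For (i)$\Leftrightarrow$(ii) I would route through Kronecker function rings. The first observation is that $\boldsymbol{\mathcal V}(\wedge_Y)=\boldsymbol{\mathcal V}((\wedge_Y)_f)$: the defining condition $F^\star\subseteq FV$ for a $\star$-valuation overring involves only finitely generated modules $F$, and $\wedge_Y$ and $(\wedge_Y)_f$ agree on such $F$; hence $\Kr(A,\wedge_Y)=\Kr(A,(\wedge_Y)_f)$, and this ring equals $\Kr(Y)$ by the cited identity $\Kr(A,\wedge_Y)=\Kr(Y)$. Next, $(\wedge_Y)_f$ is complete (a valuative operation is \texttt{e.a.b.}, and the finite-type associate of an \texttt{e.a.b.} operation is again \texttt{e.a.b.} and of finite type), so by Proposition \ref{complete} it coincides with $\kr((\wedge_Y)_f)$, an operation recovered from the single datum $\Kr(A,(\wedge_Y)_f)=\Kr(Y)$. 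Therefore $(\wedge_{Y_1})_f=(\wedge_{Y_2})_f$ forces $\Kr(Y_1)=\Kr(Y_2)$, while conversely $\Kr(Y_1)=\Kr(Y_2)$ yields $\kr((\wedge_{Y_1})_f)=\kr((\wedge_{Y_2})_f)$, whence $(\wedge_{Y_1})_f=(\wedge_{Y_2})_f$ again by Proposition \ref{complete}. I expect this equivalence to be the main obstacle, since it is the only step that is not a formal manipulation of closure operators: it rests on the injectivity of $\star\mapsto\Kr(A,\star)$ on complete operations, which is precisely what Proposition \ref{complete} packages.

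Part (b) then follows from part (a). When $A$ is integrally closed, $Z:=\zar(K|A)$ is itself a representation of $A$, because the intersection of all valuation overrings of $A$ in $K$ is the integral closure of $A$, namely $A$; and trivially $\Cl^{\mbox{\tiny{\texttt{inv}}}}(Z)=Z$. For any representation $Y$ of $A$, the vacancy requirement $\Kr(Y)=\Kr(Z)$ is, by the equivalence (a)(ii)$\Leftrightarrow$(iii), the same as $\Cl^{\mbox{\tiny{\texttt{inv}}}}(Y)=\Cl^{\mbox{\tiny{\texttt{inv}}}}(Z)=Z$, that is, the density of $Y$ in $\zar(K|A)^{\mbox{\tiny{\texttt{inv}}}}$. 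Quantifying over all representations gives the stated characterization. (Equivalently, one reads this off directly from Corollary \ref{vacant} together with $\Cl^{\mbox{\tiny{\texttt{inv}}}}(Y)=\Cl^{\mbox{\tiny{\texttt{ultra}}}}(Y)^\downharpoonright$.)

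Finally, for part (c) I would chain the preceding corollary $(\wedge_Y)_f=\wedge_{\Cl^{\mbox{\tiny{\texttt{cons}}}}(Y)}$ with the generic-closure invariance $\wedge_W=\wedge_{W^\downharpoonright}$ and with Corollary \ref{R-inv}(2):
$$
(\wedge_Y)_f=\wedge_{\Cl^{\mbox{\tiny{\texttt{cons}}}}(Y)}=\wedge_{\Cl^{R}(\Cl^{\mbox{\tiny{\texttt{cons}}}}(Y))}=\wedge_{\Cl^{\mbox{\tiny{\texttt{inv}}}}(Y)},
$$
the last equality because $\Cl^{R}(\Cl^{\mbox{\tiny{\texttt{cons}}}}(Y))=\Cl^{\mbox{\tiny{\texttt{inv}}}}(Y)$. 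This short chain needs no quasi-compactness argument and closes the proof.
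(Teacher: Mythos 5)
Your proposal is correct and follows exactly the route the paper intends: the survey presents this corollary as a reformulation of the earlier results (Theorem \ref{semistar-closure}, Corollary \ref{vacant}, and the corollary $(\wedge_Y)_f=\wedge_{\scriptsize\Cl^{\mbox{\tiny{\texttt{cons}}}}(Y)}$) through the identity $\Cl^{\mbox{\tiny{\texttt{inv}}}}(Y)=\Cl^{R}(\Cl^{\mbox{\tiny{\texttt{ultra}}}}(Y))$, deferring the remaining details to \cite{fifolo-13b}, and your parts (a)(i)$\Leftrightarrow$(iii), (b) and (c) are precisely these reformulations. The one step the paper leaves entirely to the cited reference, (i)$\Leftrightarrow$(ii), you fill in soundly using exactly the machinery the paper sets up for this purpose: $\boldsymbol{\mathcal V}(\star)=\boldsymbol{\mathcal V}(\star_f)$, the identity $\Kr(A,\wedge_Y)=\Kr(Y)$, and the injectivity of $\star\mapsto\Kr(A,\star)$ on complete operations packaged in Proposition \ref{complete}.
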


\medskip

B. Olberding in \cite{ol-13} calls a subset $Y$ of $\zar(K|A)$ an {\it affine subset of $\zar(K|A)$} if $A^{\wedge_Y} := \bigcap \{ V \mid V \in Y \}$ is a Pr\"ufer domain with quotient field equal to $K$.
Note that $  Z :=\zar(K|A)$, equipped with the Zariski topology,  can be viewed as a locally ringed space with the structure sheaf defined by
$$ \mathcal O_{Z}(U) :=A^{\wedge_U} = \bigcap \{ V \mid V \in U \}\,, \; \mbox{ for each nonempty open subset $U$ of } Z \,,$$ (for more details, see \cite{ol-13}).
With this structure of locally ringed space, an affine subset $Y$ of $Z$ is not necessarily itself an affine scheme, that is 
$Y$ (endowed with the Zariski topology induced by $Z$) is not necessarily homeomorphic to $\Spec(\mathcal O_{Z}(Y))$, however, by Corollary \ref{inv-wedge}(a), is an inverse-dense subspace of the affine scheme \newline
 $(\Cl^{\mbox{\tiny{\texttt{inv}}}}(Y), {\mathcal O}_{Z}|_{_{\!\scriptsize \ad^{\mbox{\tiny{\texttt{inv}}}}(Y)}})$.

If $T$ is an indeterminate over $K$ and $A(T)$ is the Nagata ring associated to $A$ \cite[Section 33]{gi-72}, for each $Y \subseteq Z$, we can consider
$$Y(T) := \{ V(T) \mid V \in Y\} \subseteq \zar_0(K(T)|A(T)):= \{ V(T) \mid V \in Z\}\,,$$
 and, as above, 
  $\Kr(Y) = \bigcap \{ V(T) \mid V \in Y\}$.
  
  \bigskip
  
  The following statements were proved by Olberding \cite[Propositions 5.6 and 5.10 and Corollaries 5.7 and 5.8]{ol-13}. We give next a proof based on some of the results contained in \cite{fifolo-13b} and recalled above. 
  
\begin{prop} \label{prop-olb}
Let $Y$ be a subset of $Z:= \zar(K|A)$. Then,
\begin{enumerate}
\item[\rm{(1)}]  Assume that $A$ is a Pr\"ufer domain with quotient field $K$. Then,  $Y = \Cl^{\mbox{\tiny{\texttt{inv}}}}(Y)$  if and only if  \ $Y = \zar(K|R)$ for some overring $R$ of $A$. 
\item[\rm{(2)}]  $Y = \Cl^{\mbox{\tiny{\texttt{inv}}}}(Y)$ if and only if $Y(T) = \{W(T) \mid W \in Z \mbox{ and } W(T) \supseteq \Kr(Y)\}$.
\item[\rm{(3)}] $\Cl^{\mbox{\tiny{\texttt{inv}}}}(Y)= \{W \in Z \mid   W(T) \supseteq \Kr(Y)\}$,  hence $(\Cl^{\mbox{\tiny{\texttt{inv}}}}(Y))(T) = \zar(K(T)|\Kr(Y))$.
\item[\rm{(4)}] $A^{\wedge_Y} = \bigcap \{W \in \Cl^{\mbox{\tiny{\texttt{inv}}}}(Y) \}$.
\item[\rm{(5)}] If $Y$ is an affine subset of $Z$ then 
$$\Cl^{\mbox{\tiny{\texttt{inv}}}}(Y) =\{W \in Z \mid   W \supseteq A^{\wedge_Y} \} = \zar(K|A^{\wedge_Y})\,.$$
\item[\rm{(6)}] Assume that  $Y_1$ and $Y_2$ are two affine subsets of $Z$, then
$$ \bigcap\{V\mid V\in Y_1\}=\bigcap \{V\mid V\in Y_2\} \; \Leftrightarrow \; \Cl^{\mbox{\tiny{\texttt{inv}}}}(Y_1)=
\Cl^{\mbox{\tiny{\texttt{inv}}}}(Y_2)\,.$$

\item[\rm{(7)}] $(\Cl^{\mbox{\tiny{\texttt{inv}}}}(Y))(T) =\Cl^{\mbox{\tiny{\texttt{inv}}}}(Y(T))$.
\item[\rm{(8)}] Each ring of fractions of $A^{\wedge_Y}$ can be represented as an intersection of valuation domains contained in a subset of $\Cl^{\mbox{\tiny{\texttt{inv}}}}(Y)$, in other words, if $S$ is a multiplicatively closed subset of $A^{\wedge_Y}$ then $(A^{\wedge_Y})_{S} = A^{\wedge_{\Sigma}}$ for some $\Sigma \subseteq \Cl^{\mbox{\tiny{\texttt{inv}}}}(Y)$.

\item[\rm{(9)}] The  canonical homeomorphism of topological spaces (all endowed with the Zariski topology)
$$ \tau: \Spec(\Kr(K|A)) \rightarrow \zar(K|A)\,,\; Q \mapsto \Kr(K|A)_Q \cap K $$
(where $\tau = \sigma^{-1}$, see Theorem \ref{zar-spectral}(4)) determines a continuous injective map $\Spec(\Kr(Y)) \rightarrow \zar(K|A)$ which restricts to a homeomorphism of 
 $\Spec(\Kr(Y))$ (respectively, $\Max(\Kr(Y))$) onto $ \Cl^{\mbox{\tiny{\texttt{inv}}}}(Y)$ (respectively, $\Max(\Cl^{\mbox{\tiny{\texttt{inv}}}}(Y))$).  
\end{enumerate}
\end{prop}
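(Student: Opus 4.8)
The plan is to treat part (3) as the engine of the whole proposition and to deduce everything else from it, using the homeomorphism $\sigma$ (and its inverse $\tau$) of Theorem~\ref{zar-spectral}(4), the characterization of $K$--function rings in Proposition~\ref{zar0}, and the description $\Cl^{\mbox{\tiny{\texttt{inv}}}}(Y)=\Cl^{R}(\Cl^{\mbox{\tiny{\texttt{cons}}}}(Y))$ of Corollary~\ref{R-inv}(2). Write $S:=\Kr(Y)$, a $K$--function ring by Proposition~\ref{basic}, and set $\mathcal W:=\{W\in Z\mid W(T)\supseteq S\}$. For $\Cl^{\mbox{\tiny{\texttt{inv}}}}(Y)\subseteq\mathcal W$ I would check that $\mathcal W$ is inverse-closed and contains $Y$: it contains $Y$ trivially, it is closed under generizations (overrings) because $W\subseteq W'$ forces $W(T)\subseteq W'(T)$, and it is constructibly closed because, by Proposition~\ref{zar0}(ii), $\{W(T)\mid W\in\mathcal W\}=\zar(K(T)\mid S)$ is patch-closed in $\zar(K(T))$ while $\varphi$ is a patch-closed map by Theorem~\ref{zar-spectral}(1); a constructibly closed, generization-closed set is inverse-closed by Corollary~\ref{R-inv}(2). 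The reverse inclusion is the crux: given $W_0\in\mathcal W$, I use that $\Cl^{\mbox{\tiny{\texttt{ultra}}}}(Y)$ is compact in the ultrafilter ($=$constructible) topology (Theorem~\ref{zar-ultra}) and that $\Kr$ is unchanged under ultrafilter closure. Arguing by contradiction, if no $V\in\Cl^{\mbox{\tiny{\texttt{ultra}}}}(Y)$ satisfies $V\subseteq W_0$, then choosing $x_V\in V\setminus W_0$ and extracting a finite subcover of the clopen sets $B_{x_V}$ produces $x_1,\dots,x_n\notin W_0$ such that every $V\in\Cl^{\mbox{\tiny{\texttt{ultra}}}}(Y)$ contains some $x_i$; the rational function $g:=\big(x_1^{-1}+x_2^{-1}T+\cdots+x_n^{-1}T^{\,n-1}\big)^{-1}$ then has $v_{V(T)}(g)\ge 0$ for every such $V$ but $v_{W_0(T)}(g)<0$, so $g\in S\setminus W_0(T)$, a contradiction. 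Hence $W_0$ is an overring of some ultrafilter limit point, i.e. $W_0\in\Cl^{R}(\Cl^{\mbox{\tiny{\texttt{ultra}}}}(Y))=\Cl^{\mbox{\tiny{\texttt{inv}}}}(Y)$. The ``hence'' clause of (3) is then immediate from Proposition~\ref{zar0}(ii).

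With (3) in hand the remaining parts follow quickly. Part (2) is immediate, since by (3) the displayed set equals $(\Cl^{\mbox{\tiny{\texttt{inv}}}}(Y))(T)$ and $V\mapsto V(T)$ is injective. Part (4) uses the facts recalled in the text that $\wedge_Y=\wedge_{Y^{\downharpoonright}}=\wedge_{\Cl^{\mbox{\tiny{\texttt{ultra}}}}(Y)}$ together with $\Cl^{\mbox{\tiny{\texttt{inv}}}}(Y)=\Cl^{R}(\Cl^{\mbox{\tiny{\texttt{ultra}}}}(Y))$, whence $A^{\wedge_Y}=A^{\wedge_{\Cl^{\mbox{\tiny{\texttt{inv}}}}(Y)}}=\bigcap\{W\mid W\in\Cl^{\mbox{\tiny{\texttt{inv}}}}(Y)\}$. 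For part (5), since $D:=A^{\wedge_Y}$ is Pr\"ufer with quotient field $K$ it is vacant, so $\Kr(Y)=\Kr(\zar(K\mid D))$; feeding this into (3) and intersecting with $K$ (using $\Kr(\cdot)\cap K=D$, as $D$ is integrally closed) gives $\Cl^{\mbox{\tiny{\texttt{inv}}}}(Y)=\{W\mid W\supseteq D\}=\zar(K\mid D)$. Part (1) is then read off from (5): for Pr\"ufer $A$ every $Y$ is affine, so $\Cl^{\mbox{\tiny{\texttt{inv}}}}(Y)=\zar(K\mid A^{\wedge_Y})$, which yields both implications. Part (6) likewise follows from (5), because for Pr\"ufer domains $\zar(K\mid D_1)=\zar(K\mid D_2)\Leftrightarrow D_1=D_2$. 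For part (7) I would apply (5) inside $\zar(K(T))$: there $Y(T)$ is affine since $\bigcap Y(T)=\Kr(Y)$ is B\'ezout with quotient field $K(T)$, giving $\Cl^{\mbox{\tiny{\texttt{inv}}}}(Y(T))=\zar(K(T)\mid\Kr(Y))=(\Cl^{\mbox{\tiny{\texttt{inv}}}}(Y))(T)$ by the ``hence'' in (3). Part (8) uses that $\Kr(Y)$ is B\'ezout: for $S\subseteq D$ one has $\Kr(Y)_S\cap K=D_S$ and $\Kr(Y)_S=\bigcap\{W(T)\mid W\in\Sigma\}$ for the subset $\Sigma\subseteq\Cl^{\mbox{\tiny{\texttt{inv}}}}(Y)$ of overrings attached to the primes missing $S$, so intersecting with $K$ gives $D_S=A^{\wedge_\Sigma}$.

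For part (9) I would compose $\tau=\sigma^{-1}$ with the contraction map $c\colon\Spec(\Kr(Y))\to\Spec(\Kr(K\mid A))$ induced by $\Kr(K\mid A)\subseteq\Kr(Y)$. Since the inclusion of a Pr\"ufer domain in an overring is a flat epimorphism, $c$ is injective and a homeomorphism onto its image $\{Q\mid\Kr(Y)\subseteq\Kr(K\mid A)_Q\}$, so $\tau\circ c$ is continuous and injective. Using $\Kr(K\mid A)_{\sigma(W)}=W(T)$, this image corresponds under $\tau$ exactly to $\{W\mid W(T)\supseteq\Kr(Y)\}=\Cl^{\mbox{\tiny{\texttt{inv}}}}(Y)$ by (3), yielding a homeomorphism $\Spec(\Kr(Y))\to\Cl^{\mbox{\tiny{\texttt{inv}}}}(Y)$ for the Zariski topologies. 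Being a Zariski homeomorphism it preserves specializations; since $Q_1\subseteq Q_2$ corresponds to $W_2\subseteq W_1$, the maximal ideals of $\Kr(Y)$ match the $\preceq$-maximal points of $\Cl^{\mbox{\tiny{\texttt{inv}}}}(Y)$, i.e. $\Max(\Cl^{\mbox{\tiny{\texttt{inv}}}}(Y))$.

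The two genuinely delicate steps are the reverse inclusion in (3)---producing the witnessing function $g$ from the finite subcover, where the Gaussian (content) formula for $v_{V(T)}$ must be applied carefully so that $g\in V(T)$ holds simultaneously for all $V\in\Cl^{\mbox{\tiny{\texttt{ultra}}}}(Y)$ while $g\notin W_0(T)$---and, in (9), verifying that the contraction image is precisely the flat-epimorphism locus and that the order correspondence carries maximal ideals to inverse-maximal valuation domains.
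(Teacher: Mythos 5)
Your proof is correct, but it inverts the paper's logical architecture, so the two arguments are genuinely different. The paper proves (1) first, applying Corollary \ref{inv-wedge}(b) (representations of vacant domains are inverse-dense) to the Pr\"ufer overring $A^{\wedge_Y}$; it then deduces (2) from (1) by transporting the problem to $\zar(K(T)|A(T))$ through the homeomorphism of Theorem \ref{zar-spectral}(3) and Proposition \ref{zar0}, and finally obtains (3) from (1) and (2) by a minimality argument over inverse-closed sets containing $Y$. You instead prove (3) from scratch and let everything cascade from it: one inclusion because your set $\mathcal W=\{W\in Z\mid W(T)\supseteq \Kr(Y)\}$ is patch-closed (being $\varphi(\zar(K(T)|\Kr(Y)))$, the image of a patch-closed set under the patch-closed map of Theorem \ref{zar-spectral}(1)) and generization-stable, hence inverse-closed by Corollary \ref{R-inv}(2); the reverse inclusion by compactness of $\Cl^{\mbox{\tiny{\texttt{ultra}}}}(Y)$ and the explicit witness $g=(x_1^{-1}+x_2^{-1}T+\cdots+x_n^{-1}T^{n-1})^{-1}$, whose Gauss valuations satisfy $v_{V(T)}(g)=\max_j v(x_j)\geq 0$ for every $V\in \Cl^{\mbox{\tiny{\texttt{ultra}}}}(Y)$ (since some $x_j$ lies in $V$), while $v_{W_0(T)}(g)=\max_j w_0(x_j)<0$, giving $g\in \Kr(Y)\setminus W_0(T)$, a contradiction; this is exactly the classical Zariski--Kronecker compactness trick and you apply it correctly (note that only $Y\subseteq \Cl^{\mbox{\tiny{\texttt{ultra}}}}(Y)$ is needed to conclude $g\in\Kr(Y)$, so invariance of $\Kr$ under ultrafilter closure is not even required). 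What each route buys: yours avoids Corollary \ref{inv-wedge} altogether, needing only Proposition \ref{closure-intersection}, Corollary \ref{R-inv}, Propositions \ref{basic} and \ref{zar0}, and the vacancy of Pr\"ufer domains, so the proposition becomes self-contained modulo the general topological facts of the survey; the paper's route is shorter on the page but logically heavier, since Corollary \ref{inv-wedge} rests on the semistar-operation theorem (Theorem \ref{semistar-closure}). Your treatments of (4), (8) and (9) essentially coincide with the paper's ((9) spells out the Pr\"ufer flat-epimorphism facts the paper leaves implicit), your derivations of (1), (2), (5), (6), (7) from (3) are sound, and the only caveat --- shared with the paper's own statement and proof --- is that (1), (5), (6) tacitly assume the subsets involved are nonempty.
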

\begin{proof}  
 (1) 
Let $Y $ be a non empty, closed set with respect to the inverse topology, and let $ R:=A^{\wedge_Y}:=\bigcap \{V \mid V\in Y\}$. 
 Since $R$ is an overring of the Pr\"ufer domain $A$, $R$ is also a Pr\"ufer domain, thus it is vacant.
  By Corollary \ref{inv-wedge}(b), $Y$ is a dense subspace of $\zar(K|R)$, with the inverse topology, i.e. $\Cl^{\mbox{\tiny{\texttt{inv}}}}(Y) \cap \zar(K|R)=\zar(K|R)$. Thus $\zar(K|R)\subseteq \Cl^{\mbox{\tiny{\texttt{inv}}}}(Y)$. On the other hand, the inclusion $Y\subseteq \zar(K|R)$ implies $\Cl^{\mbox{\tiny{\texttt{inv}}}}(Y) \subseteq \zar(K|R)$, since $\zar(K|R)$ is clearly inverse closed. Therefore,  $Y=\Cl^{\mbox{\tiny{\texttt{inv}}}}(Y)=\zar(K|R)$. The converse holds for any integral domain.

(2)  Let $A(T)$ be Nagata ring associated to $A$. By Theorem \ref{zar-spectral}(3), the natural map $\varphi: \zar(K(T)|\Kr(K|A)) \subseteq \zar(K(T)|A(T))\rightarrow \zar(K|A)$, $W\mapsto W\cap K$, is a homeomorphism with respect  to the  Zariski topology  and to the constructible topology and thus also with respect to the inverse topology.   By the previous homeo\-morphism $Y$ is inverse-closed in  $\zar(K|A)$ if and only if $Y(T)$ is inverse-closed in $\zar(K(T)|A(T))$. Therefore, 
 if $Y$ is inverse-closed, then $Y(T)$ is inverse-closed in  $\zar(K(T)|A(T))$  and thus, by (1),  $Y(T) = \zar(K(T)|\Kr(Y))$. Finally, by Proposition \ref{zar0}, 
 $\zar_0(K(T)|\Kr(Y))= \zar(K(T)|\Kr(Y))$.  
 
 Conversely, if  $Y(T) = $ $\zar_0(K(T)|\Kr(Y)) $, then  $Y(T) = \zar(K(T)|\Kr(Y))$ (Pro\-position \ref{zar0}), hence by (1) $Y(T)$ is inverse-closed.
 
 (3) By (1), $\zar(K(T)|\Kr(Y)) =Y(T)$ is an inverse-closed subspace of  the space $\zar(K(T)|A(T))$, then $\{W \in Z \mid $ $  W(T) \supseteq \Kr(Y)\}$ is an inverse-closed subspace of $\zar(K|A)$ (Theorem \ref{zar-spectral}(3)) and it obviously contains $Y$. Let $Y'$ be an inverse-closed subspace of $\zar(K|A)$ containing $Y$, then clearly $Y(T) \subseteq Y'(T) = \zar(K(T)|\Kr(Y'))$ and so  $\{W \in Z \mid $ $  W(T) \supseteq \Kr(Y)\} \subseteq \{W' \in Z \mid $ $  W'(T) \supseteq \Kr(Y')\} = \Cl^{\mbox{\tiny{\texttt{inv}}}}(Y') = Y'$. The last part of the statement follows from Theorem \ref{zar-spectral}(3).
 
 (4) is a straightforward consequence of Corollary \ref{inv-wedge}(c).
 
 (5)  Since $Y$ is an affine set, $A^{\wedge_Y}$ is a Pr\"ufer domain with quotient field $K$. Therefore, by (1),
 $ \Cl^{\mbox{\tiny{\texttt{inv}}}}(Y)= \zar(K|R)$ for some  overring $R$ of $A$ that, without loss of generality, we can assume integrally closed.
 Hence, $A^{\wedge_Y} = A^{\wedge_{\tiny{\Cl^{\mbox{\tiny{\texttt{inv}}}}(Y)}}}= \bigcap \{ V \in \zar(K|R)\} = R$ 
 and so
 $\Cl^{\mbox{\tiny{\texttt{inv}}}}(Y)= \zar(K|A^{\wedge_Y})$.
 
 (6) The implication $(\Leftarrow)$ holds in general and  is a straightforward consequence of  Corollary \ref{inv-wedge}(a).  For $(\Rightarrow)$, assume more generally that $ R:= \bigcap\{V\mid V\in Y_1\}=\bigcap \{V\mid V\in Y_2\}$ is a vacant domain with quotient field $K$ then, by Corollary \ref{vacant} (or Corollary \ref{inv-wedge}(b)), $\Cl^{\mbox{\tiny{\texttt{inv}}}}(Y_1) = \zar(K|R) =\Cl^{\mbox{\tiny{\texttt{inv}}}}(Y_2)$. 
 
 (7) Since $\Cl^{\mbox{\tiny{\texttt{inv}}}}(Y) = \Cl^{\mbox{\tiny{\texttt{inv}}}}(\Cl^{\mbox{\tiny{\texttt{inv}}}}(Y))$ then, by (2),  we have $(\Cl^{\mbox{\tiny{\texttt{inv}}}}(Y))(T) = $ $\zar(K(T)|\Kr(Y))$. The conclusion follows from (3).
 
 (8)  Note that $A^{\wedge_Y} = \Kr(Y) \cap K$ and so  $(A^{\wedge_Y})_S = \Kr(Y)_S \cap K$. Since each overring of  $\Kr(Y)$ is a $K$-function ring, there exists $\Sigma \subseteq Y$ such that  $ \Kr(Y)_S = \Kr(\Sigma)$ (Proposition \ref{basic}(1)).  We conclude that  $(A^{\wedge_Y})_S = \Kr(\Sigma) \cap K = A^{\wedge_\Sigma}$.
 
 (9) Observe that $\Kr(Y)$ is an overring of the Pr\"ufer domain $\Kr(K|A)$. Thus $\Spec(\Kr(Y))$ is canonically embedded in $\Spec(\Kr(K|A))$. If $P \in \Spec(\Kr(Y))$ then,  by (3), $\Kr(Y)_P \cap K \in \Cl^{\mbox{\tiny{\texttt{inv}}}}(Y)$. The conclusion follows from Theorem \ref{zar-spectral}(4).  
  \end{proof}

\begin{rk}  \emph{Another proof of Proposition \ref{prop-olb}(1) is based on the fact that when $A$ is a Pr\"ufer domain, it is easy to see that $\zar(K|S\cap T)= \zar(K|S) \cup \zar(K|T)$ for each pair of overrings $S$ and $T$ of $A$.  Now, suppose  $Y = \Cl^{\mbox{\tiny{\texttt{inv}}}}(Y)$.  When $A$ is Pr\"ufer, $ \Cl^{\mbox{\tiny{\texttt{inv}}}}(Y) = \bigcap \{\zar(K|A_\lambda) \mid \lambda \in \Lambda \}$,  where $ A_\lambda $ is a finitely generated overring of  $A$. Moreover,  $\bigcap \{\zar(K|A_\lambda) \mid \lambda \in \Lambda \} = \zar(K|R)$, where  $R$ is the ring generated by $ \bigcup \{A_\lambda\mid \lambda \in \Lambda\}$. Conversely, if $Y = \zar(K|R)$ for some overring $R$ of $A$, then $Y =  \bigcap \{\zar(K|A[r]) \mid r\in R \}$ and thus  is inverse-closed since it is the intersection of a family of inverse-closed subsets, $\zar(K|A[r])$, of $\zar(K|A)$. }

\end{rk}

\begin{thm} \emph{(Olberding  \cite[Proposition 5.10]{ol-13})} Let $K$ be a field and $A$ a subring of $K$. Let $R$ be an integrally closed domain with quotient field $K$, containing as subring $A$.
Given a subset $Y \subseteq \zar(K|A)$ such that $R = \bigcap \{V\mid V\in Y\}$
 and $Y= \Cl^{\mbox{\tiny{\texttt{inv}}}}(Y)$,
  then $\Phi(Y):=\Kr(Y)$ is a $K$--function ring such that
   $\Phi(Y) \cap K = R$. 
   Conversely, given a $K$--function ring $\Psi$, $A(T) \subseteq \Psi \subseteq K(T)$ such that $\Psi \cap K = R$, then $\mathsf F(\Psi) := \{W \cap K \mid W \in \zar(K(T)|\Psi)\}$ is an inverse-closed subspace of $\zar(K|A)$ and $R =  \bigcap \{V\mid V\in \mathsf F(\Psi) \}$. Furthermore, $\mathsf F(\Phi(Y)) =Y$ for each inverse-closed subspace  $Y$ of $\zar(K|A)$ and
   $\Phi(\mathsf F(\Psi)) =\Psi$, for each $K$--function ring $\Psi$, $A(T) \subseteq \Psi \subseteq K(T)$.   
   \end{thm}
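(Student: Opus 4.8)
The plan is to show that $\Phi$ and $\mathsf F$ are mutually inverse bijections between the inverse-closed subspaces of $\zar(K|A)$ and the $K$-function rings $\Psi$ with $A(T)\subseteq \Psi\subseteq K(T)$, extracting almost everything from Propositions \ref{basic}, \ref{zar0} and \ref{prop-olb}. First I would dispose of the assertion that $\Phi(Y)=\Kr(Y)$ is a $K$-function ring with $\Phi(Y)\cap K=R$. Since $Y$ is a nonempty subset of $\zar(K|A)$, the ring $\Kr(Y)=\bigcap\{V(T)\mid V\in Y\}$ is an intersection of the $K$-function rings $V(T)$ (Proposition \ref{basic}(5)), hence itself a $K$-function ring by Proposition \ref{basic}(2). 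Intersecting with $K$ commutes with arbitrary intersections and $V(T)\cap K=V$, so $\Phi(Y)\cap K=\bigcap\{V(T)\cap K\mid V\in Y\}=\bigcap\{V\mid V\in Y\}=R$; this is precisely the identity $\Kr(Y)\cap K=A^{\wedge_Y}$ recorded in the proof of Proposition \ref{prop-olb}(8). Note that this step uses only that $Y$ is nonempty, not that it is inverse-closed.

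Next I would pin down $\mathsf F(\Psi)$ concretely, which is the crux. Given a $K$-function ring $\Psi$ with $A(T)\subseteq \Psi\subseteq K(T)$, Proposition \ref{zar0} forces $\zar(K(T)|\Psi)=\zar_0(K(T)|\Psi)$, so each $W\in\zar(K(T)|\Psi)$ is a trivial extension $W=(W\cap K)(T)$; writing $V:=W\cap K$ one gets $V(T)=W\supseteq\Psi\supseteq A(T)$, whence $V\supseteq A$ and $V\in\zar(K|A)$. Thus $\mathsf F(\Psi)=\{V\in\zar(K|A)\mid V(T)\supseteq\Psi\}$. Because $\Psi$ is a B\'ezout domain (Proposition \ref{basic}(3)) it is integrally closed, so it equals the intersection of its valuation overrings; combining this with $\zar=\zar_0$ gives
\[
\Phi(\mathsf F(\Psi))=\Kr(\mathsf F(\Psi))=\bigcap\{V(T)\mid V\in\mathsf F(\Psi)\}=\bigcap\{W\mid W\in\zar(K(T)|\Psi)\}=\Psi,
\]
which settles $\Phi(\mathsf F(\Psi))=\Psi$. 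Intersecting the same chain with $K$ yields $\bigcap\{V\mid V\in\mathsf F(\Psi)\}=\Psi\cap K=R$, one of the two claims in the converse direction.

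It remains to see that $\mathsf F(\Psi)$ is inverse-closed and that $\mathsf F(\Phi(Y))=Y$ for $Y$ inverse-closed, and both follow from Proposition \ref{prop-olb}(3), which identifies $\Cl^{\mbox{\tiny{\texttt{inv}}}}(W_0)=\{V\in\zar(K|A)\mid V(T)\supseteq\Kr(W_0)\}$ for any subset $W_0$. Applying this with $W_0=\mathsf F(\Psi)$ and using $\Kr(\mathsf F(\Psi))=\Psi$ from the previous step gives $\Cl^{\mbox{\tiny{\texttt{inv}}}}(\mathsf F(\Psi))=\{V\mid V(T)\supseteq\Psi\}=\mathsf F(\Psi)$, so $\mathsf F(\Psi)$ is inverse-closed. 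Applying it with $W_0=Y$ gives $\mathsf F(\Phi(Y))=\mathsf F(\Kr(Y))=\{V\mid V(T)\supseteq\Kr(Y)\}=\Cl^{\mbox{\tiny{\texttt{inv}}}}(Y)=Y$, the last equality because $Y$ is inverse-closed. The only genuinely delicate point is the identification of $\mathsf F(\Psi)$ in the middle paragraph: one must invoke Proposition \ref{zar0} to guarantee $\zar(K(T)|\Psi)=\zar_0(K(T)|\Psi)$, so that every valuation overring $W$ of $\Psi$ descends to $V=W\cap K$ and lifts back as $V(T)=W$; without this, $\bigcap\{(W\cap K)(T)\}$ need not recover $\Psi$. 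Once this translation between overrings of $\Psi$ in $K(T)$ and members of $\mathsf F(\Psi)$ in $\zar(K|A)$ is in place, the rest is bookkeeping on top of Proposition \ref{prop-olb}.
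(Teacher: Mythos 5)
Your argument is correct, and the first thing to note is that the survey itself gives \emph{no} proof of this theorem: it is stated with a citation to Olberding's preprint, immediately after Proposition \ref{prop-olb}, whose proof the paper does supply. So the comparison here is not with an in-paper argument but with an external one; what your route buys is that the theorem becomes a formal consequence of results the paper has already established. Concretely: Proposition \ref{basic}(2),(5) gives that $\Phi(Y)=\Kr(Y)$ is a $K$--function ring with $\Phi(Y)\cap K=\bigcap\{V\mid V\in Y\}=R$ (and you are right that only nonemptiness of $Y$, not inverse-closedness, is used there); Proposition \ref{zar0} ((i)$\Leftrightarrow$(ii)) yields the key identification $\mathsf F(\Psi)=\{V\in\zar(K|A)\mid V(T)\supseteq \Psi\}$, whence $\Phi(\mathsf F(\Psi))=\bigcap\{W\mid W\in\zar(K(T)|\Psi)\}=\Psi$ because the B\'ezout domain $\Psi$ (Proposition \ref{basic}(3)) is integrally closed and hence is the intersection of its valuation overrings; and Proposition \ref{prop-olb}(3) then delivers both $\Cl^{\mbox{\tiny{\texttt{inv}}}}(\mathsf F(\Psi))=\mathsf F(\Psi)$ and $\mathsf F(\Phi(Y))=\Cl^{\mbox{\tiny{\texttt{inv}}}}(Y)=Y$. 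There is no circularity, since Proposition \ref{prop-olb} is proved in the paper independently of the present theorem. Two facts you use tacitly deserve an explicit line: $V(T)\cap K=V$ (a standard content argument in the valuation ring $V$), and $A\subseteq V$ implies $A(T)\subseteq V(T)$, which is what guarantees $A(T)\subseteq\Kr(Y)$, i.e., that $\Phi$ really lands in the class of $K$--function rings between $A(T)$ and $K(T)$ to which the stated correspondence applies.
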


From the previous theorem, it follows that if $Y_1$ and $Y_2$ are two different subsets of $Z :=\zar(K|A)$ such that 
$\Cl^{\mbox{\tiny{\texttt{inv}}}}(Y_1) = \Cl^{\mbox{\tiny{\texttt{inv}}}}(Y_2)$ then $\Kr(Y_1)$ and $\Kr(Y_2)$ are two different $K$--function rings such that $\Kr(Y_1) \cap K = \bigcap \{ V  \mid V \in Y_1 \} = \bigcap \{ V  \mid V \in Y_2 \} = \Kr(Y_2) \cap K$. Furthermore, if $R$ is an integrally closed domain with quotient field $K$ then $\zar(K|R)$ is an inverse-closed subspace of $\zar(K|A)$, $R =  \bigcap \{ V  \mid V \in \zar(K|R)\}$ and $\Kr(\zar(K|R))$ is the smallest
$K$--function ring such that $\Kr(\zar(K|R)) \cap K =R$. If we assume that $R$ is a Pr\"ufer domain, then $\Kr(\zar(K|R))$ is the unique $K$--function ring such that $\Kr(\zar(K|R)) \cap K =R$ \cite[Theorem 32.15 and Proposition 32.18]{gi-72}.
\medskip

We have already observed that $Z:=\zar(K|A)$ (endowed with the Zariski topology) is always a spectral space, being canonically homeomorphic to $\Spec(\Kr(K|A))$. It is natural to investigate when the ringed space $(Z,\mathcal O_Z)$
is an affine scheme. 

\begin{thm}
\emph{(Olberding \cite[Theorem 6.1 and Corollaries 6.2 and 6.3]{ol-13})} Let $K$ be a field, $A$ a subring of $K$ and  $Y$ a subspace of $Z:=\zar(K|A)$ (endowed with the Zariski topology). Then,
\begin{enumerate}
\item[\rm{(1)}]  
$(Y, \mathcal O_Y)$ 
is an affine scheme if and only if
 $\mathcal O_Y(Y)$ is a Pr\"ufer domain and
  $Y =\Cl^{\mbox{\tiny{\texttt{inv}}}}(Y)$ or, equivalently, if and only if $Y$ is an inverse-closed affine subset of $Z$.
\item[\rm{(2)}] $(Z, \mathcal O_Z)$ 
is an affine scheme if and only if
the integral closure of $A$ in $K$  is a Pr\"ufer domain with quotient field $K$.
\item[\rm{(3)}] $Y =\Cl^{\mbox{\tiny{\texttt{inv}}}}(Y)$ if and only if $(Y(T), \mathcal O_{Y(T)})$ is an affine scheme.
 \end{enumerate}

\end{thm}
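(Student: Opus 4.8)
The plan is to prove part (1) directly and then to read off parts (2) and (3) as specializations. Throughout set $R := \mathcal O_Y(Y) = A^{\wedge_Y} = \bigcap\{V \mid V\in Y\} \subseteq K$, a domain, and assume $Y\neq\emptyset$. Everything hinges on one computation: \emph{for each $V\in Y$ the stalk of the structure sheaf is $\mathcal O_{Y,V}=V$}. Indeed, $\mathcal O_{Y,V}$ is the filtered union of the sections $\mathcal O_Y(U)=\bigcap_{W\in U}W$ over the open neighbourhoods $U$ of $V$ in $Y$ (all transition maps being inclusions of subrings of $K$); each such section lies in $V$ because $V\in U$, while for every $x\in V$ the basic open set $B_x\cap Y$ is a neighbourhood of $V$ with $x\in\bigcap_{W\in B_x\cap Y}W=\mathcal O_Y(B_x\cap Y)$, so $x$ contributes a germ. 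Hence $\mathcal O_{Y,V}=V$.

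For the forward implication of (1), assume $(Y,\mathcal O_Y)$ is an affine scheme and fix a locally ringed space isomorphism $(Y,\mathcal O_Y)\cong(\Spec(R),\mathcal O_{\Spec(R)})$ sending $V\in Y$ to a prime $P_V$. This induces, compatibly with the maps out of the global sections, a $R$-algebra isomorphism between the stalks $\mathcal O_{Y,V}=V$ and $\mathcal O_{\Spec(R),P_V}=R_{P_V}$. Since $R=\mathcal O_Y(Y)\to V$ is the inclusion and $R\to R_{P_V}$ is localization, this isomorphism is forced to be the canonical embedding of $R_{P_V}$ into $K$, so $V=R_{P_V}$ as subrings of $K$. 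Thus every localization of $R$ at a prime is a valuation domain of $K$, which shows that $R$ is a Pr\"ufer domain; and as ${\rm qf}(R)={\rm qf}(R_{P_V})={\rm qf}(V)=K$, it has quotient field $K$, i.e. $Y$ is an affine subset. Moreover, the points of $Y$ are exactly the localizations $\{R_P\mid P\in\Spec(R)\}=\zar(K|R)$ (the valuation overrings of a Pr\"ufer domain being precisely its localizations), and $\zar(K|R)=\bigcap_{r\in R}B_r$ is an intersection of quasi-compact open subsets of $\zar(K|A)$; hence $Y=\Cl^{\mbox{\tiny{\texttt{inv}}}}(Y)$. Conversely, assume $R$ is Pr\"ufer with quotient field $K$ and $Y=\Cl^{\mbox{\tiny{\texttt{inv}}}}(Y)$. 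By Proposition \ref{prop-olb}(5), $\Cl^{\mbox{\tiny{\texttt{inv}}}}(Y)=\zar(K|A^{\wedge_Y})=\zar(K|R)$, so $Y=\zar(K|R)$. The center map $\gamma\colon\zar(K|R)\to\Spec(R)$ is then a homeomorphism with inverse $P\mapsto R_P$ (Theorem \ref{zar-ultra}(5) and the Remark (b) following it, for the Zariski topology), and it is an isomorphism of ringed spaces because on the basic open $\{V\mid 1/r\in V\}=\{R_P\mid r\notin P\}$ one has $\mathcal O_Y(\{V\mid 1/r\in V\})=\bigcap_{r\notin P}R_P=R_r=\mathcal O_{\Spec(R)}(D(r))$, and the stalks agree by the computation above; hence $(Y,\mathcal O_Y)$ is an affine scheme. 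This proves (1), and the "equivalently" clause is just the observation that "$\mathcal O_Y(Y)$ Pr\"ufer with quotient field $K$ and $Y$ inverse-closed" is the definition of an inverse-closed affine subset.

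Part (2) is the case $Y=Z=\zar(K|A)$: here $Z$ is trivially inverse-closed and $\mathcal O_Z(Z)=\bigcap\{V\mid V\in\zar(K|A)\}$ is the integral closure of $A$ in $K$, so (1) yields exactly the stated criterion. For part (3), apply (1) with the field $K(T)$ and the subring $\Kr(K|A)$ to the subspace $Y(T)\subseteq\zar(K(T)|\Kr(K|A))$, which by Proposition \ref{zar0} equals $\zar_0(K(T)|A(T))$. Its ring of global sections is $\mathcal O_{Y(T)}(Y(T))=\bigcap\{V(T)\mid V\in Y\}=\Kr(Y)$, a $K$--function ring and hence, by Proposition \ref{basic}(3), a B\'ezout (a fortiori Pr\"ufer) domain with quotient field $K(T)$ --- \emph{always}. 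Therefore (1) gives that $(Y(T),\mathcal O_{Y(T)})$ is an affine scheme if and only if $Y(T)=\Cl^{\mbox{\tiny{\texttt{inv}}}}(Y(T))$; and since $\varphi$ is a homeomorphism for the Zariski and the constructible topologies (Theorem \ref{zar-spectral}(3)), hence for the inverse topology (Corollary \ref{R-inv}), and $V\mapsto V(T)$ is injective, this is equivalent to $Y=\Cl^{\mbox{\tiny{\texttt{inv}}}}(Y)$.

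The main obstacle is the forward direction of (1): one must extract, from a purely abstract isomorphism of locally ringed spaces $(Y,\mathcal O_Y)\cong\Spec(R)$, the concrete identification $V=R_{P_V}$ of each point of $Y$ with a localization of $R$ inside $K$. This rests on the stalk computation $\mathcal O_{Y,V}=V$ and on tracking the $R$-algebra structure through the isomorphism; once it is in hand, the characterization of Pr\"ufer domains via their localizations and the description $Y=\zar(K|R)$ follow immediately, and parts (2) and (3) reduce to (1).
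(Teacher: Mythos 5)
This theorem is stated in the survey purely as a citation of Olberding's results, with no proof given (the paper ends immediately after it), so there is no in-house argument to compare yours against; your proposal has to stand on its own, and in essentials it does. Your decomposition --- prove (1) via the stalk computation $\mathcal O_{Y,V}=V$, then read off (2) and (3) --- is natural, and the ingredients you import are exactly the survey's toolkit: Proposition \ref{prop-olb}(5) and the center-map homeomorphism (Theorem \ref{zar-ultra}(5), valid for the Zariski topology by the remark following that theorem) for the converse of (1); Krull's theorem that $\mathcal O_Z(Z)=\bigcap\{V \mid V \in Z\}$ is the integral closure of $A$ in $K$ for (2); and Propositions \ref{basic} and \ref{zar0} together with Theorem \ref{zar-spectral}(3) for (3), where your observation that $\Kr(Y)$ is \emph{always} B\'ezout with quotient field $K(T)$ correctly explains why no Pr\"ufer hypothesis appears in statement (3).

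Two places need tightening. (a) In the forward direction of (1), an arbitrary isomorphism $(Y,\mathcal O_Y)\cong \Spec(R)$ is compatible with global sections only up to a ring automorphism $\alpha$ of $R=\mathcal O_Y(Y)$; your epimorphism argument then yields $V=R_{\alpha(P_V)}$ rather than $V=R_{P_V}$. The conclusion you actually need (every point of $Y$ is a localization of $R$ at a prime, and every prime occurs) survives, since $\alpha$ permutes $\Spec(R)$, but as written the claim that the stalk isomorphism ``is forced to be the canonical embedding'' is unjustified; the clean fix is the standard fact that a locally ringed space is an affine scheme if and only if the canonical adjunction morphism $Y\to\Spec(\Gamma(Y,\mathcal O_Y))$ is an isomorphism, whose global-sections map \emph{is} the identity. (b) You tacitly read ``Pr\"ufer domain'' in (1) as ``Pr\"ufer domain with quotient field $K$'', matching the ``equivalently'' clause; this reading is in fact necessary, since for example $Y=Z=\zar(\mathbb Q(x)|\mathbb Z)$ is inverse-closed with $\mathcal O_Y(Y)=\mathbb Z$ Pr\"ufer, yet is not an affine scheme (its space has dimension $2$, while $\Spec(\mathbb Z)$ has dimension $1$). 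Finally, note that $U\mapsto\bigcap\{W\mid W\in U\}$ is a sheaf only when any two nonempty open subsets of $Y$ meet; this holds for $Y=\zar(K|R)$, which contains $K$, so your converse is safe, and in the forward direction the sheaf property is part of the hypothesis --- but the point deserves a sentence, since both you and the survey gloss over it.
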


\noindent  {\bf Acknowledgment.} The authors thank  the referee for providing helpful suggestions and pointing out to them the very recent paper by N. Schwartz \cite{sch-13}.

\footnotesize


\begin{thebibliography}{999}  


\bibitem{al-56}
Pavel S. Alexandroff, \emph{Combinatorial topology}, Graylock Press, Rochester, 1956.

\bibitem{calota}
Paul-Jean Cahen, Alan Loper, and Francesca Tartarone, Integer-valued polynomials and Pr\"ufer $v-$multiplication domains, {\it J. Algebra} {\bf 226} (2000), 765--787.

\bibitem{ch}
Claude Chevalley et Henri Cartan, {Sch\'emas normaux; morphismes; ensembles constructibles,} \emph{S\'eminaire Henri Cartan}  {\bf 8} (1955-1956), Exp. No. 7, 1--10.



\bibitem{dofefo-87}  D. Dobbs, R. Fedder and M. Fontana, Abstract Riemann surfaces of integral domains
and spectral spaces. \emph{Ann. Mat. Pura Appl.} {\bf 148} (1987), 101--115.

\bibitem{dofo-86} D. Dobbs and M. Fontana, Kronecker function rings and abstract Riemann surfaces.
\emph{J. Algebra} {\bf 99} (1986),  263--274.



\bibitem{dofopa-80} D. Dobbs, M. Fontana and I. Papick, On certain distinguished spectral sets, \emph{Ann. Mat. Pura Appl.} {\bf 128} (1980), 227--240.


\bibitem{fa-10}
Alice Fabbri,   Kronecker function rings of domains and projective models, Ph.D. Thesis, Universit\`a degli Studi ``Roma Tre'', 2010.

\bibitem{fi-13} C. Finocchiaro, Spectral spaces and ultrafilters, \emph{Comm. Algebra}, to appear. 



\bibitem{fifolo-13a} C. Finocchiaro, M. Fontana and K. A. Loper, Ultrafilter and constructible topologies on spaces of valuation domains, \emph{Comm. Algebra}, to appear. 

\bibitem{fifolo-13b} C. Finocchiaro, M. Fontana and K. A. Loper, The constructible topology on the spaces
of valuation domains, \emph{Trans. Amer. Math. Soc.}, to appear.




\bibitem{fo-80}
Marco  Fontana, Topologically defined classes of commutative rings,  {\it Ann. Mat. Pura Appl.} {\bf 123} (1980), 331--35.


\bibitem{folo-01a} Marco Fontana and K. Alan Loper, Kronecker function rings: a general approach, in ``Ideal theoretic methods in commutative algebra'' ((D.D. Anderson and I.J. Papick, Eds., Columbia, MO, 1999), 189--205, {\it Lecture Notes in Pure and Appl. Math.}, {\bf 220}, Dekker, New York, 2001.

\bibitem{folo-01b}  M. Fontana and K. A. Loper, A Krull-type theorem for the semistar integral closure
of an integral domain, Commutative algebra. AJSE, {\it Arab. J. Sci. Eng. Sect. C Theme
Issues} {\bf 26} (2001), 89--95.

 \bibitem{folo-06} M. Fontana and K. A. Loper,     An historical overview of Kronecker function rings, Nagata rings and related star and semistar operations,  in ``Multiplicative Ideal Theory in Commutative Algebra: A tribute to the work of Robert Gilmer", Jim Brewer, Sarah Glaz, William Heinzer, and Bruce Olberding Editors, Springer, Berlin, 2006.

\bibitem{folo-07} M. Fontana and A. Loper, A generalization of Kronecker function rings and Nagata
rings, {\it Forum Math.} {\bf 19} (2007), 971--1004.
 

\bibitem{folo} Marco Fontana and K. Alan Loper, The patch topology and the ultrafilter topology on the prime spectrum of a commutative ring, {\it Comm. Algebra} {\bf 36} (2008), 2917--2922.

\bibitem{folo-09} M. Fontana, K. A. Loper, Cancellation properties in ideal systems: a classification of \texttt{e.a.b.} semistar operations, \emph{J. Pure Appl. Algebra} {\bf 213} (2009), no. 11, 2095--2103.


\bibitem{fr-vdP}
J. Fresnel and M. van der Put, G\'eom\'etrie algebrique rigide et applications, Progress in Mathematics {\bf 18}, Bikh\"auser, Boston and Basel, 1981.




\bibitem{fuka}
K. Fujiwara and F. Kato, Rigid geometry and applications, Advanced Studies in Pure Mathematics {\bf 45}, 2006, Moduli spaces and arithmetic geometry (Kyoto, 2004), 327--386.

\bibitem{gi-72} R. Gilmer,   Multiplicative ideal theory, M. Dekker, New York, 1972.

\bibitem{gihe}
R. Gilmer and W. Heinzer, Irredundant intersections of valuation rings, \emph{Math. Z.} {\bf 103} (1968), 306--317.




\bibitem{EGA}
Alexander Grothendieck   et   Jean Dieudonn\'e,  \'El\'ements de G\'eom\'etrie 
Alg\'ebrique I, IHES 1960; Springer, Berlin, 1970.


\bibitem{hk-03}
Franz Halter-Koch, Kronecker function rings and generalized integral closures. {\it Comm. Algebra}  {\bf 31}  (2003), 45--59.

\bibitem{heu-10} O. Heubo-Kwegna, Kronecker function rings of transcendental field extensions, \emph{Comm.
Algebra} {\bf }38 (2010), 2701--271.

\bibitem{ho-69} Melvin Hochster, Prime ideal structure in commutative rings, \emph{Trans. Amer. Math. Soc.} {\bf 142} (1969),   43--60.

\bibitem{hu-2}
Roland Huber,  Bewertungsspektrum und rigide Geometrie, Regensburger Mathematische
Schriften, vol. 23, Universit\"at Regensburg, Fachbereich Mathematik, Regensburg, 1993.

\bibitem{hu-kn}
Roland Huber and Manfred Knebusch, On valuation spectra, in  ``Recent advances in real algebraic geometry and quadratic forms: proceedings of the RAGSQUAD year'', Berkeley, 1990-1991,  Contemp. Math. {\bf 155}, Amer. Math. Soc., Providence, RI, 1994.


\bibitem{ka-74}
 I. Kaplansky,  Commutative Rings, revised ed., The University of Chicago Press, Illinois, 1974.
 
 

\bibitem{ku-04} F.-V. Kuhlmann, Places of algebraic function fields in arbitrary characteristic, \emph{Adv.
Math.} {\bf 188} (2004), 399--424.


\bibitem{leoh-76}
W.J. Lewis and J. Ohm, The ordering of $\Spec( R)$, \emph{Canad. J. Math.} {\bf 28} (1976), 820--835.


\bibitem{Loper1} 
K. Alan Loper, Sequence domains and integer-valued polynomials,  
{\it J. Pure  Appl. Algebra}   {\bf 119}  (1997), 185--210.

\bibitem{Loper2}  
K. Alan Loper, A classification of all  $D$ such that  Int($D$) 
is a Pr\"ufer 
domain,  {\it Proc. Amer. Math. Soc.}
 {\bf  126}   (1998),  657--660.
 

\bibitem{na-62}
Masayoshi Nagata,
Imbedding of an abstract variety in a complete variety,  
{\it J. Math. Kyoto Univ.} {\bf 2} (1962), 1--10.

\bibitem{na-63}
Masayoshi Nagata,
A generalization of the imbedding problem of an abstract variety in a complete variety. 
{\it J. Math. Kyoto Univ.} {\bf 3} (1963), 89--102.
 

 \bibitem{ol-13}
 B. Olberding, Affine schemes and topological closures in the Zariski-Riemann space of valuation rings, preprint.
 
 

 \bibitem{pi-75}
G. Picavet, Autour des id\'eaux premiers de Goldman dÕun anneau commutatif,
{\emph Ann. Sci. Univ. Clermont Math.} {\bf 57} (1975), 73--90.


\bibitem{pi-75a}
G. Picavet, Sur les anneaux commutatifs dont tout id\'eal premier est de Goldman, \emph{C. R.
Acad. Sci. Paris S\'er. I Math.} {\bf 280} (1975), no. 25,  A1719--A1721.

\bibitem{ruya-08}
W. Rump  and Y.C. Yang, Jaffard-Ohm correspondence and Hochster duality, \emph{Bull. Lond. Math.
Soc.} {\bf 40} (2008), 263--273.
 
\bibitem{sch}
 Niels  Schwartz, Compactification of varieties, \emph{Ark. Mat.} {\bf 28} (1990), 333--370.
 
 \bibitem{sch-13} Niels  Schwartz, , Sheaves of Abelian $l$-groups, \emph{Order}, {\bf 30} (2013), 497--526.

 \bibitem{sch-tr}
 Niels Schwartz and Marcus Tressl, Elementary properties of minimal and maximal points in Zariski spectra, \emph{J. Algebra} {\bf 323} (2010), 698--728.
 
 
 \bibitem{ta}
 John Tate, Rigid analytic spaces, \emph{ Invent. Math.} {\bf 12} (1971), 257--269.
 

 \bibitem{te-1}
Michael Temkin, Relative Riemann-Zariski spaces, \emph{Israel J. Math.} {\bf } (2011), 1--42.

\bibitem{tety}
Michael Temkin and Ilya Tyomkin, Pr\"ufer algebraic spaces, arXiv:1101.3199 (2011).


\bibitem{za-39} O. Zariski, The reduction of singularities of an algebraic surface, \emph{Ann. Math.} {\bf 40} (1939), 639--689.


\bibitem{za} O. Zariski, The compactness of the Riemann manifold of an abstract field of algebraic functions, \emph{Bull. Amer. Math. Soc} {\bf 50} (1944), 683--691.

\bibitem{za-44} O. Zariski, Reduction of singularities of algebraic three dimensional varieties, \emph{Ann. Math.} {\bf 45} (1944), 472--542.




\bibitem{zasa} 
O. Zariski, P. Samuel, \emph{Commutative Algebra, Volume 2}, Springer Verlag, Graduate Texts in Mathematics {\bf 29}, New York, 1975 (First Edition, Van Nostrand, Princeton, 1960). 



\end{thebibliography}
\end{document}